\documentclass[12pt,a4paper]{amsart}

\usepackage[abbrev]{amsrefs}

\usepackage{latexsym,subfigure}
\usepackage{amsmath,amsthm,amsfonts,amssymb,graphicx,epsfig,latexsym,color}

\bibliographystyle{alpha}

\def\S{{\Sigma}}

\def\d{{\partial}}

\date{\today}

\usepackage{color}

\theoremstyle{plain}
  \newtheorem{thm}{Theorem}[section]
  \newtheorem{lem}[thm]{Lemma}
  
  \newtheorem{cor}[thm]{Corollary}
  \newtheorem{prop}[thm]{Proposition}

\theoremstyle{definition}

  \newtheorem{ex}[thm]{Example}

\theoremstyle{remark}
  \newtheorem{rem}[thm]{Remark}

\numberwithin{equation}{section}

\DeclareMathOperator{\vol}{vol}

\DeclareMathOperator{\isom}{Isom}
\DeclareMathOperator{\Wh}{Wh}

\newcommand{\field}[1]{\mathbb{#1}}
\newcommand{\tF}{{\tilde F}}

\newcommand{\R}{\field{R}}

\newcommand{\Z}{\field{Z}}

\begin{document}

\title[]
{Graph manifolds as ends of negatively curved Riemannian manifolds}

\thanks{The authors are partially supported by a Grant-in-Aid
for Scientific Research from the Japan Society for the Promotion of Science,
15H05739, 26400060}

\begin{abstract}
Let $M$ be a graph manifold such that each piece
of its JSJ decomposition has the $\Bbb H^2 \times \R$
geometry. Assume that the pieces are glued by isometries. 
Then,  there exists a complete Riemannian metric on $\R \times M$
which is an ``eventually warped cusp metric'' with the sectional curvature
$K$ satisfying $-1 \le K <0$.

A theorem by Ontaneda then implies that $M$ appears as an end 
of a 4-dimensional, complete, non-compact Riemannian manifold
of finite volume with sectional curvature $K$ satisfying $-1 \le K <0$.

\end{abstract}

\author{Koji Fujiwara}
\author{Takashi Shioya}

\address{Mathematical Institute, Tohoku University, Sendai 980-8578, JAPAN}
\address{Department of Mathematics, Kyoto University, Kyoto 606-8502, JAPAN}
\email{shioya@math.tohoku.ac.jp}
\email{kfujiwara@math.kyoto-u.ac.jp}

\date{\today}

\dedicatory{Dedicated to Professor Kenji Fukaya on the occasion of his sixtieth birthday}


\maketitle

\section{Introduction and main theorem}
\label{sec:intro}

\subsection{Ends of manifolds of negative curvature}

 If a non-compact manifold $N$ is the interior
 of some compact manifold with boundary, 
 then each boundary component is called
 an {\it end} of $N$.
 Let $N$ be a complete, non-compact Riemannian manifold
of finite volume such that the sectional curvature $K$
satisfies $-1 \le K <0$.
It is known by Gromov-Schroeder \cite{BGS} that $N$ is diffeomorphic
to the interior of a compact manifold, $\bar N$,
with boundary, $\partial \bar N$.
$\partial \bar N$ has finitely many components, and
each component is an end of $N$.

It is a wide open question to decide which manifolds, $M$, can appear as ends 
of such Riemannian manifolds. 
An end  is a closed manifold and one general obstruction 
by Gromov \cite[0.5]{G} is that  the simplicial volume of $\partial \bar N$, hence, of each end 
is zero. 
Also, the $\ell^2$-betti number and the Euler characteristic
vanish (see \cite[Corollary 15.7]{igor1}).
It is a theorem by Avramidi-Nguyen Phan \cite[Corollary 5]{AP} that 
if the dimension of an end we concern 
is at most 4, then it is aspherical. 
In this paper we address the question: which aspherical 
manifolds can appear as such ends? 

For example, an $n$-dimensional torus appears as an end
of an $(n+1)$-dimensional hyperbolic manifold
of finite volume. 
Other examples of ends are 
circle bundles over some hyperbolic manifolds of various 
dimension, \cite{fujiwara} (cf. \cite{igor2}, \cite{M} for the compex
hyperbolic versions).
In contrast to tori, such bundles will not be ends
of any complete, non-compact Riemannian manifold of finite
volume such that $-1 \le K \le -c <0$ for some $c>0$, 
since under this curvature assumption, the fundamental group of 
an end has to be virtually nilpotent.  

In dimension 2, if a closed, aspherical  manifold has zero simplicial volume, then it is a torus or a Klein bottle, and it appears as an end, for exmaple in the figure eight knot complement and in the Gieseking manifold.
In dimension 3, any closed aspherical manifold $M$ is 
irreducible, has an infinite fundamental group
and its universal cover is $\R^3$, cf.\cite{Lu}.
If the simplicial volume of such $M$ is 0 then 
it is a graph manifold. 

A {\em graph manifold} is an aspherical,  
closed 3-manifold whose JSJ decomposition along embedded incompressible tori/Klein bottles contains only 
Seifert fibred spaces. Abresch-Schroeder \cite{AS} proved 
certain graph manifolds appear as ends.
Our theorem will provide a  large class
of graph manifolds that appear as ends, 
and their examples are 
contained in our class
(but for such manifold $M$, their manifold $N$ that has $M$ as an end is different from ours).
Also,  every 3-dimensional sol-manifold appears as an end, \cite{phan}.

Other known examples are infranilmanifolds, \cite{onta}, \cite{BeK}.
See also \cite{igor} for an axiomatic construction
from known examples. 

\subsection{Eventually warped product cusp metric}
In this paper we show that a family of (3-dimensional) graph
manifolds occurs as ends of complete, non-compact, Riemannian manifolds
of finite volume whose sectional curvature $K$ satisfies $-1 \le K <0$.

To explain our strategy, we recall the following groundbreaking theorem
by Ontaneda, \cite{onta}. 
If a (not necessarily connected) manifold $B$ is diffeomorphic to the boundary
of a connected, smooth, compact manifold $N$, then we say that $B$ {\it bounds}
$N$. We sometimes say $B$ {\it bounds} without specifying $N$.


\begin{thm}[Ontaneda]\label{thm.ontaneda}
Let $B$ be a closed manifold such that 
either $\dim B \le 4$ or the Whitehead group $\Wh(B)$ vanishes. 

Assume 
 $\Bbb R \times B$ admits a complete Riemannian metric
$g$ such that 
\begin{enumerate}
\item there exists a constant $C<0$ with 
the sectional curvature of $g$ satisfying  $C \le K <0$,
\item $(-\infty, 0] \times B$ has finite $g$-volume,
\item there is $D >0$ such that on $[D,\infty) \times B$,
the metric $g$ is of the form $g=dr^2 + e^{2r}g_B$
for some Riemannian metric $g_B$ on $B$.
\end{enumerate}
Then $B \sqcup B$ bounds a manifold whose interior admits a complete Riemannian 
metric of finite volume and the sectional curvature in $[-1, 0)$.
\end{thm}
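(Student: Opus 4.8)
The plan is to view the given manifold $(\R\times B,g)$ as ``one half'' of the desired object and to glue two copies of it into a complete, finite--volume manifold $\mathring N$ of curvature in $[-1,0)$ having two ends, each diffeomorphic to $B$; then $\bar N:=\overline{\mathring N}$ is a compact manifold with $\d\bar N\cong B\sqcup B$, which is the assertion. On $(-\infty,0]\times B$ the metric $g$ already gives a complete, finite--volume, negatively curved cusp modelled on $B$; the defect is that the flaring end $[D,\infty)\times B$, where $g=dr^2+e^{2r}g_B$, has infinite volume, so it must be capped off. (At the very end a global rescaling $g\mapsto|C|g$ turns $C\le K<0$ into $-1\le K<0$ and changes neither the finiteness of volume nor, up to a shift in $r$, the form of the flaring end, so I ignore the distinction between $C$ and $-1$ below.)

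The key obstruction, which shapes the whole argument, is that the flaring end is \emph{expanding}: for $c\ge D$ the level hypersurface $\{r=c\}\times B$ has shape operator $+\mathrm{Id}$ with respect to the outward normal $\d_r$ of $\{r\le c\}$, hence is convex on the cusp side. If one simply truncated the two copies at $\{r=R\}$ and glued them there, or inserted a warped--product neck $dr^2+w(r)^2g_B$, then $w$ would have to attain an interior maximum, and the neck would carry concentrated \emph{positive} curvature (after smoothing, a collar on which the radial curvature $-w''/w$ is forced positive); such positive concentrated curvature cannot be removed while keeping $K<0$, by a Gauss--Bonnet--type argument on an annular neighbourhood of the neck. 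Nor can one dodge this by a metric change on $[D,\infty)\times B$ alone, since $B\times(\text{interval})$ does not in general admit a complete finite--volume metric of negative curvature; the flaring end has to be replaced by a topologically different negatively curved piece.

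This replacement is the step I expect to be the main obstacle, and the one where I would invoke Ontaneda's work as a black box. What is needed is a compact manifold $Q$, with $\d Q=\Sigma_{\mathrm{in}}\sqcup\Sigma$, carrying a metric of curvature in $[C,0)$ such that near $\Sigma_{\mathrm{in}}$ it is the flaring germ $dr^2+e^{2r}g_B$ (so $\Sigma_{\mathrm{in}}\cong(B,e^{2D}g_B)$, with outward shape operator $-\mathrm{Id}$), and near $\Sigma$ it has the reflection--symmetric form $dt^2+\cosh^2(t)\,h_\Sigma$ for some negatively curved metric $h_\Sigma$ on $\Sigma$. Since a totally geodesic boundary is itself negatively curved, $\Sigma$ cannot in general be $B$; it should be thought of as a hyperbolic (or otherwise negatively curved) manifold cobordant to $B$, and $Q$ as a negatively curved cobordism from a flaring end over $B$ to $\Sigma$, produced by a Gromov-/Ontaneda--type Riemannian hyperbolization. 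Building $Q$ with all the curvature and boundary normalizations simultaneously is the technical heart.

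Granting $Q$, set $P:=\big((-\infty,D]\times B,\ g\big)\cup_{\Sigma_{\mathrm{in}}}Q$. Near $\Sigma_{\mathrm{in}}$ both sides equal $dr^2+e^{2r}g_B$ (the outward second fundamental forms, $+\mathrm{Id}$ from the cusp side and $-\mathrm{Id}$ from the $Q$ side, add to zero), so $P$ inherits a smooth metric of curvature in $[C,0)$; it is a finite--volume cusp modelled on $B$ near its one cusp end, and near $\d P=\Sigma$ it has the form $dt^2+\cosh^2(t)\,h_\Sigma$. Now double: $\bar N:=P\cup_\Sigma P$ is compact with $\d\bar N$ the two copies of the cusp cross--section, i.e.\ $\d\bar N\cong B\sqcup B$, and its interior carries the metric obtained by reflecting across $\Sigma$; because $\cosh^2$ is an even function this metric is automatically smooth across $\Sigma$ and still of curvature in $[C,0)$. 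It is complete, of finite volume (two finite--volume cusps glued to a compact core), and has exactly two ends, each diffeomorphic to $B$. Rescaling by $|C|$ puts the curvature in $[-1,0)$; finally, the Gromov--Schroeder structure theory for finite--volume negatively curved manifolds (using $\dim B\le4$ or $\Wh(B)=0$, which is where the controlled--topology input enters) identifies this manifold with the interior of the compact $\bar N$, whose boundary is $B\sqcup B$. Hence $B\sqcup B$ bounds, as required.
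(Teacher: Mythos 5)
The paper offers no proof of this statement: it is quoted as Ontaneda's theorem, stated implicitly in \cite{onta} and extracted in this form in \cite{igor}, and the authors simply cite it. So the comparison here is between your proposal and the argument in \cite{onta}. Your overall architecture --- keep the finite-volume half $(-\infty,D]\times B$, cap the flaring end with a compact negatively curved piece carrying the collar germ $dr^2+e^{2r}g_B$, and double across a totally geodesic hypersurface --- does match the shape of that argument, and your observation that any warped-product neck $dr^2+w(r)^2g_B$ joining two flaring ends forces $-w''/w>0$ at an interior maximum of $w$ correctly explains why something nontrivial is required.

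The difficulty is that, as a proof, your proposal is empty at exactly the decisive point. The compact cobordism $Q$ --- negatively curved, with one boundary germ equal to the flaring collar over an \emph{arbitrary} closed $B$ and the other boundary totally geodesic --- is not an off-the-shelf black box independent of the statement: producing it is the content of Ontaneda's theorem (it comes from his Riemannian hyperbolization of a cobordism bounded by $B$, together with his normalization of the metric near the hyperbolized pieces). Invoking ``Ontaneda's work'' for $Q$ therefore makes the argument circular relative to what is being proved. Two further slips: (i) the hypothesis ``$\dim B\le 4$ or $\Wh(B)=0$'' is not consumed by Gromov--Schroeder end theory at the last step --- no end-structure theory is needed at all, since your two ends are explicit product cusps $(-\infty,0]\times B$ and the compactification is done by hand by truncating in $r$; that hypothesis lives inside the construction of $Q$, roughly to trivialize (via the $s$-cobordism theorem, or low-dimensional arguments) an $h$-cobordism that the hyperbolization produces between $B$ and the actual boundary of the hyperbolized piece. (ii) $P\cup_\Sigma P$ is not compact as written --- it is the complete open manifold with two cusps; the compact $\bar N$ is its truncation/compactification. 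The second point is cosmetic; the first two are the real gaps.
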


A metric on $\Bbb R \times B$ that satisfies
the condition (2) is called a {\it cusp} metric and an
{\it eventually warped {\rm(}cusp{\rm)} metric} if it satisfies (3).
This theorem is stated only implicitly in \cite{onta}
(see \cite{igor}, where the result is quoted in this form),
since it is an intermediate claim, but a detailed argument
is given. The actual value of $C$ is not important
and we can take $C=-1$ by rescaling $g$.

We will show that for a manifold $B$ in  certain families
there exists a Riemannian metric on $\R \times B$ that 
is an eventually warped cusp metric with $C \le K <0$ for some 
$C<0$.
Then Theorem \ref{thm.ontaneda} implies that $B$ is an end of a manifold of
negative curvature. 

This argument appears in \cite{onta}
for the infranil manifolds (The existence of a desired metric is known
by \cite{BK}) then also is used in \cite{igor} and  \cite{phan}
to construct other examples of ends.

\subsection{Graph manifolds and flip manifolds}
To illustrate the first family we handle, let $W$ be a 3-dimensional manifold
which is diffeomorphic to $\Sigma \times S^1$, where 
$\Sigma$ is a compact surface with non-empty boundary and $S^1$ is a circle. 
Each boundary component of $W$ is a torus, $S^1 \times S^1$,
where the first factor is a boundary component of $\Sigma$.
We put  an orientation to each factor. 
We call such $W$ a {\it piece}, and $\Sigma$ the {\it base} surface of $W$.
We construct  a {\em closed}, connected, 3-dimensional manifold $M$, 
which is a graph manifold,  from a finite collection of pieces
by gluing a pair of boundary tori by a diffeomorphism, a {\em gluing map}.
There are two special maps for gluing: the trivial map mapping 
the first factor to the first factor and the second one to the second;
the {\em flip map} interchanging the first and second factors. We preserve the orientation of the factor. 
We say $M$ is a {\it flip-manifold}, \cite{KL},
if each gluing map is either the trivial map or the flip map.

Some remarks are in order.
There are eight ways to glue a pair of boundary tori: two ways to put an orientation
on each of the two $S^1$, then a trivial map or a flip map.
If $\Sigma$ is a closed surface, then $\Sigma \times S^1$
is considered as a flip manifold made from one piece of two boundary components, 
where the gluing map is trivial.

More generally, mabye the $S^1$-fibers are not-orientable, and/or a piece is a Seifert fibered space, $S$, \cite[Section 3]{scott}. Let $s_1, \cdots, s_n$ be the singular
fibers of $S$ where the twist at $s_i$ is 
by the $q_i/p_i$ of a full twist.  $(q_i,p_i)$ 
is called the {\em orbit invariant} of $s_i$, which is a pair of co-prime
integers with $0<q_i< p_i$.
One can say that a Seifert fibered space is an $S^1$-bundle
over a base orbifold, where the singular fibers occur at the orbifold points, while 
$\Sigma \times S^1$ is a (trivial) $S^1$-bundle over the surface $\Sigma$.

A {\em generalized flip manifold} is 
a generalization of a flip manifold where
we allow Seifert fibered spaces  in addition to
products $\Sigma \times S^1$ as pieces in the definition.
Of course we only consider gluing maps that are diffeomorphisms. 

We call a base surface/orbifold $\Sigma$ {\em hyperbolic} if
we can put a complete hyperbolic (orbifold) metric of finite area 
to the interior of $\Sigma$. We denote $\Sigma^o$ the interior of $\Sigma$.
An $S^1$-bundle over $\Sigma^o$, has 
a Riemannian metric that is locally a product of the hyperbolic metric 
and $S^1$ (see \cite{scott}). In other words, it has the {\it geometry} of 
$\Bbb H^2 \times \R$, or the metric is of {\it type} $\Bbb H^2 \times \R$.
We only consider pieces of this kind in this paper. 

We truncate a small neighborhood of each cusp of $\Sigma^o$
such that the each boundary component of the 
universal cover of the truncated $\Sigma^o$ with respect to the hyperbolic metric
is a horoline in $\Bbb H^2$. 
Since $\Sigma$ is diffeomorphic to the truncated $\Sigma^o$, 
we obtain a Riemannian metric on the $S^1$-bundle over $\Sigma$
such that each boundary torus/Klein bottle is flat. We also say this metric is
of {\it type} $\Bbb H^2 \times \R$.

In this paper we say a graph manifold $M$ has a {\em geometrization} if one can put a Riemannian metric of type $\Bbb H^2 \times \R$ on all pieces such that every gluing map along the boundary tori/Klein bottle is an {\em isometry}.
Some remarks are in order. 
We do not assume that each gluing map is a trivial map or a flip map, 
see Example \ref{example.non-flip}.
A metric of type $\Bbb H^2 \times \R$ on a piece is 
not unique.
Without loss of generality, we may assume that there is a small $c>0$
such that the length of the fibers of the pieces and the length
of the boundary components of the pieces are $c$.
The resulting metric on $M$ after gluing the pieces is only $C^1$.
If $M$ has a geometrization or not depends only on the topology of $M$.
In \cite{BK}, they use the term {\em isometric geometrization}
instead of geometrization (see Remark \ref{rem.buyalo}).

We prove:
\begin{thm}\label{metric}
Let $M$ be a graph manifold such that each piece has the geometry 
of $\Bbb H^2 \times \R$. Assume $M$ has a geometrization
(i.e., the gluing is isometric). Then there is a complete Riemannian 
metric $g$ on $\Bbb R \times M$ that is an eventually warped cusp metric
with the sectional curvature $K$ satisfying $C \le K <0$
for some constant $C<0$.
\end{thm}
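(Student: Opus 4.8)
The plan is to build the metric $g$ on $\mathbb{R}\times M$ piece-by-piece, mimicking the JSJ decomposition of $M$, and then to interpolate between the pieces across the gluing tori. First I would fix the geometrization: a type $\mathbb{H}^2\times\mathbb{R}$ metric on every piece with all fibers and all boundary components of common length $c$. The model to keep in mind is the warped/cusp metric on $\mathbb{R}\times(\Sigma^o$-bundle$)$ obtained by writing, on each piece $W_i = (S^1\text{-bundle over }\Sigma_i)$, a metric of the shape
\begin{equation*}
dr^2 + e^{2r}\bigl(g_{\Sigma_i} + \theta_i^2\bigr),
\end{equation*}
where $g_{\Sigma_i}$ is the truncated hyperbolic metric on the base and $\theta_i$ is the connection $1$-form on the $S^1$-fiber. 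Warping the full $3$-manifold $W_i$ exponentially gives sectional curvature pinched between two negative constants (this is essentially the computation behind the infranilmanifold case of Ontaneda, and behind Abresch--Schroeder); the non-compactness coming from the cusps of $\Sigma^o$ is exactly what makes $(-\infty,0]\times W_i$ have finite volume, giving condition (2), while condition (3) is built in by construction for $r$ large.

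Next I would handle the gluing. The genuine issue is that $M$ is not itself a homogeneous space: across a JSJ torus $T$, two pieces $W_i, W_j$ are glued by an isometry of the flat $T$, and on $\mathbb{R}\times M$ the warped metrics $dr^2+e^{2r}g_i$ and $dr^2+e^{2r}g_j$ agree on the flat torus slice (all lengths normalized to $c$) but the \emph{second fundamental forms}, i.e.\ the way the base directions and the fiber direction sit relative to the collar, differ—under a flip the roles of "fiber" and "boundary-of-base" are exchanged. So near each gluing torus I would introduce a collar $(-\epsilon,\epsilon)\times T\times\mathbb{R}_r$ and damp the warping in a direction-dependent way: interpolate the quadratic form being exponentially scaled from (fiber $=$ short, base-boundary $=$ short, both length $c$ on $T$) on the $W_i$-side to the $W_j$-side. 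Because on $T$ itself the two flat metrics literally coincide, the interpolation only needs to move the "off-$T$" data, and one can arrange the interpolating family of symmetric forms to stay positive-definite and vary slowly (on the scale of $c$, or after an extra rescaling, on a scale as large as we like), so that the curvature stays in a bounded negative interval $[C,0)$. This is the step I expect to be the main obstacle: controlling the full sectional curvature tensor of the interpolated metric, not just its behaviour on planes tangent to $T$. The standard device is to do the interpolation over a long parameter interval (slow transition), which costs nothing since we are free to enlarge the collar, and to verify that all curvature terms that are created by the transition are $O(c)$ or are manifestly between two negative constants; a warped-product curvature formula together with the submersion (O'Neill) correction terms for the $S^1$-fibration organizes this.

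Then I would globalize: choose $D>0$ so that for $r\ge D$ every collar interpolation has already "locked in" to a fixed type $\mathbb{H}^2\times\mathbb{R}$ metric $g_M$ on each piece that is compatible across all gluings (this is possible precisely because $M$ has a geometrization—the pieces' metrics can be chosen to agree isometrically on the JSJ tori), so that $g = dr^2 + e^{2r}g_M$ on $[D,\infty)\times M$ verbatim, giving (3). For $r\in(-\infty,D]$ I patch the per-piece warped metrics using the collar construction above; finite volume of $(-\infty,0]\times M$ (condition (2)) follows from finite volume of $(-\infty,0]\times(\Sigma_i^o\text{-bundle})$ for each piece, i.e.\ from finite area of the complete hyperbolic base orbifolds $\Sigma_i^o$, plus a uniform bound on the collar contributions. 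Finally I would note the $C^1$-versus-smooth issue: the geometrized metric on $M$ is only $C^1$ across the JSJ tori, but the collar interpolation—chosen smooth in $r$ and in the transverse direction—upgrades $g$ to a genuine smooth Riemannian metric on $\mathbb{R}\times M$, so Theorem \ref{thm.ontaneda} applies and yields $M$ as an end of a finite-volume $4$-manifold with $-1\le K<0$ after rescaling $C$ to $-1$.
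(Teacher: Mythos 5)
There is a genuine gap, and it is at the very first step. You claim that warping each piece exponentially, $dr^2+e^{2r}\bigl(g_{\Sigma_i}+\theta_i^2\bigr)$ with $g_{\Sigma_i}$ the (truncated, hence compact) hyperbolic base metric, ``gives sectional curvature pinched between two negative constants.'' It does not: for a plane tangent to the base the warped-product formula gives $K = K_{\Sigma_i}e^{-2r}-1$, and since $K_{\Sigma_i}=-1$ on the bulk of the base this tends to $-\infty$ as $r\to-\infty$. (The pinching you have in mind holds only when the fiber metric is flat, as for tori and infranilmanifolds; a graph manifold with hyperbolic base pieces is not flat.) Relatedly, your claim that finite volume of $(-\infty,0]\times W_i$ ``comes from the cusps of $\Sigma^o$'' is confused: the pieces must be compact for $M$ to be closed, and with a fixed compact cross-section finite volume is automatic but the curvature bound fails. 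The paper's entire construction is designed to circumvent exactly this: for $r\ll 0$ the cross-sectional metric $M_r$ is \emph{not} a rescaling of a fixed metric on $M$; instead each base surface is re-truncated at depth depending on $r$ so that only the boundary circles and the $S^1$-fibers shrink like $e^r$ while the hyperbolic bulk keeps its unit-curvature metric, so $\diam M_r\to\infty$ and $K$ stays in $[C,0)$, and finite volume comes from the shrinking circle directions (this is encoded in the functions $R(r)$, $h(r)$ and $F(r,t)=b\,e^{R(r)}f(t-R(r))$ of Proposition \ref{estimate}, together with the delicate interpolation between this regime and the genuine warped product $dr^2+e^{2r}g_M$ for $r\ge D$). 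Without this idea your metric violates condition (1) of Theorem \ref{thm.ontaneda} and the argument collapses.

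Conversely, the step you single out as ``the main obstacle''---interpolating second fundamental forms across the JSJ tori under a flip---is the easy part and is not done by interpolation in the paper at all. The model metric is arranged so that in a collar of each gluing torus it is an honest Riemannian product $\partial_0A\times[0,1]$ at every level $r$ (this is why $f\equiv 1$ near the boundary); since the gluing maps are isometries of the flat tori by the geometrization hypothesis, the glued metric is smooth with no correction terms to control. So the proposal both misses the essential difficulty and overestimates an inessential one.
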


\begin{rem}
The metric in the above theorem is taken to be $C^{\infty}$.
This is always the case for other results in this paper too. 

\end{rem}

By rescaling the metric $g$ we may always take $C=-1$.
Theorem \ref{metric} has a generalization to high dimensional manifolds,
see Theorem \ref{high.dimension}.

Since $\dim M =3$, combining Theorem \ref{metric} and Theorem \ref{thm.ontaneda} we immediately
obtain:
\begin{cor}[Graph manifolds with a geometrization]\label{main}
Let $M$ be a graph manifold such that each piece has the geometry 
of $\Bbb H^2 \times \R$. Assume $M$ has a geometrization.

Then there exists a  $4$-dimensional, complete, non-compact
Riemannian manifold $N$ of finite volume, of the sectional curvature
$K$ satisfying $-1 \le K <0$, and with $M$ appearing as an end. More precisely, 
there is a compact subset
$C$ in  $N$ such that  $N \backslash C$ has two connected 
components, and that each component is 
diffeomorphic to $M \times (0,\infty)$.
\end{cor}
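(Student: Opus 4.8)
The goal is to construct, on $\R \times M$, an eventually warped cusp metric of the form $g = dr^2 + e^{2r} g_M$ on $[D,\infty) \times M$ (where $g_M$ is a metric on $M$), which on the negative half-line degenerates in a controlled way so as to have finite volume, and everywhere has sectional curvature pinched between two negative constants. The natural strategy is to work piece by piece on the JSJ decomposition and to interpolate across the gluing tori/Klein bottles, using the product structure $\mathbb H^2 \times \R$ of each piece together with the isometric gluing hypothesis.

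First I would treat a single piece $W$, which by hypothesis carries a metric of type $\mathbb H^2 \times \R$: topologically it is an $S^1$-bundle over a truncated hyperbolic base orbifold $\Sigma$, and (after normalizing fiber length and boundary length to a small $c$) its metric is locally $g_{\mathbb H^2} + dt^2$ where $t$ parametrizes the $S^1$-fiber. On the universal-cover level this is a slab $\mathbb H^2 \times \R$ (with the base truncated so each boundary lifts to a horoline). I would put on the collar of each cusp the coordinates in which $g_{\mathbb H^2} = d\rho^2 + e^{-2\rho} d\theta^2$, so near the boundary $W$ looks like a $T^2$ (or Klein bottle) times a ray, with the $T^2$ carrying a flat metric that is \emph{warped} by $e^{-2\rho}$ in the $\theta$-direction and \emph{unwarped} in the fiber direction $t$. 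On $\R \times W$ I then want the metric to look, in the cusp region, like
\begin{equation}
dr^2 + e^{2r}\bigl(d\rho^2 + e^{-2\rho} d\theta^2 + dt^2\bigr) + (\text{correction}),
\end{equation}
i.e. a doubly warped product: along $r$ everything is scaled by $e^{2r}$, and within each slice the $\theta$-circle is additionally damped. One checks that $-dr^2 + \dots$ of this type — essentially a Riemannian analogue of stacking two hyperbolic-cusp warpings — has curvature between two negative constants, the key point being the classical computation that $dr^2 + e^{2r} h$ has pinched negative curvature whenever $h$ itself has curvature bounded above, together with the fact that the extra $e^{-2\rho}$ damping only improves the upper curvature bound. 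The role of condition (2) of Theorem \ref{thm.ontaneda} is handled by noting that as $r \to -\infty$ the volume form carries an overall $e^{3r}$ (in dimension $3$), whence $(-\infty,0] \times W$ has finite volume; one must be careful that the cusp directions of $\Sigma$ do not spoil this, but the $e^{-\rho}\,d\theta$ factor only helps.

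The main obstacle, and the heart of the argument, is the gluing: when two pieces $W_1, W_2$ are glued along boundary tori by an isometry, the two product structures $\mathbb H^2 \times \R$ are generally \emph{not} compatible — the fiber direction of $W_1$ maps to some irrational-slope curve in $\partial W_2$ (this is exactly why the resulting $3$-manifold has zero simplicial volume rather than being Seifert fibered). So on $\R \times M$ near $\R \times (\partial W_1 = \partial W_2)$ I must interpolate between two warped product metrics whose warped and unwarped directions are linearly independent flat directions of the common boundary torus. The plan is: (i) thicken the gluing torus to a collar $T^2 \times [-1,1]$ inside $M$; (ii) on $\R \times T^2 \times [-1,1]$ write the metric as $dr^2 + e^{2r} Q_s(u)$ where $Q_s(u)$ is, for each $s \in [-1,1]$, a flat metric on $T^2$ depending on the collar parameter $u$, interpolating between the two boundary quadratic forms (each of which is of the $d\rho^2 + e^{-2\rho}d\theta_i^2 + dt_i^2$ shape with its own $\rho_i$); and (iii) choose the interpolation of the pair (flat metric $Q$, and the $r$-dependence of $Q$) to keep the full space-time curvature pinched. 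This last point is delicate because a path of flat $2$-tori sitting inside a warped product contributes curvature through the second fundamental form, i.e. through $\partial_u Q$ and $\partial_u^2 Q$, and through the mismatch between $\partial_r(e^{2r}Q)$ on the two sides. I expect this to require choosing the interpolation along a geodesic in the symmetric space $SL_2(\R)/SO(2)$ of flat $2$-tori of fixed area, reparametrized so that $\partial_u Q$ is small, and carrying out the curvature estimate of Buyalo–Kobelskii / Abresch–Schroeder type in this warped-product setting; the isometric gluing hypothesis is what guarantees the two boundary forms have the same area (after the normalization to boundary length $c$), so that they can indeed be joined within this symmetric space.

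Finally I would assemble: the pieces, with the cusp-region metrics of the first step, glued via the interpolating collar metrics of the second step, give a global $C^\infty$ metric $g_M$ on $M$ together with the warped metric $dr^2 + e^{2r} g_M$ on $[D,\infty)\times M$; extend $g$ to all of $\R \times M$ by capping off on $(-\infty, D]$ with the doubly-warped cusp described above (smoothly matched at $r = D$), verify finite volume of $(-\infty,0]\times M$ and the global pinching $C \le K < 0$ by the local computations on the two types of regions (piece-interiors and gluing collars), and invoke that the whole construction can be smoothed. This yields the eventually warped cusp metric claimed, and Corollary \ref{main} then follows by applying Theorem \ref{thm.ontaneda} with $B = M$, $\dim B = 3 \le 4$.
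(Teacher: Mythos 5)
Your plan founders at the lower curvature bound as $r\to-\infty$. In your cusp-region ansatz $dr^2+e^{2r}\bigl(d\rho^2+e^{-2\rho}d\theta^2+dt^2\bigr)$ the entire slice, including the hyperbolic base directions, is scaled by $e^{2r}$; for a warped product $dr^2+\phi(r)^2h$ a plane tangent to the slice has curvature $\bigl(K_h-\phi'^2\bigr)/\phi^2$, so a base-tangent plane (where $K_h=-1$) has curvature $-e^{-2r}-1\to-\infty$ as $r\to-\infty$. This is precisely the failure of the naive warped product $\R\times_{e^r}M$ that the paper points out before its construction, and the fix there is \emph{not} to warp the whole manifold: the global conformal factor $h(r)$ is chosen bounded ($h\to1$ as $r\to-\infty$), the hyperbolic core of each base surface keeps a fixed metric, and only the circle directions (the $S^1$-fiber and the cusp cross-sections) are shrunk, through a second function $e^{R(r)}$ with $R(r)=r$ for $r\le1$. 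Finite volume of $(-\infty,0]\times M$ then comes from the decay of the circle lengths alone, not from an overall $e^{3r}$ in the volume form. Without some such modification your metric violates hypothesis (1) of Theorem \ref{thm.ontaneda} and the argument does not close.

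The second problem is the gluing step, which you correctly identify as the heart of the matter but resolve by a route that is both unproven and unnecessary under the stated hypothesis. Since $M$ is assumed to have a geometrization, the gluing maps are already isometries of the boundary flat tori; there is no pair of distinct quadratic forms to join by a geodesic in $SL_2(\R)/SO(2)$. What genuinely differs between the two sides is the normal derivative of the metric (which direction of the torus is being damped), and interpolating \emph{that} across a collar while keeping $K<0$ is exactly the delicate estimate you defer. The paper sidesteps it entirely: via $F(r,t)=b\,e^{R(r)}f(t-R(r))$ with $f\equiv1$ near the boundary, each piece's metric is made an exact Riemannian product (square flat torus) $\times$ (interval) in a collar of its boundary at every $r$, so the isometric gluing creates no singularity at all. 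The transition from the hyperbolic-cusp warping $e^{t}d\tau^2$ deep in the piece to the flat product near the boundary is then performed \emph{inside} each piece, and keeping the curvature strictly negative and bounded there is the real content (Lemma \ref{curvature}, in particular the cross-term estimate $(R_{1jj2})^2<R_{1jj1}R_{2jj2}$), with the strict negativity in the otherwise-flat directions supplied by the $r$-dependence of the warping. Your proposal is missing both this relocation of the interpolation and the curvature computation that makes it work.
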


It is known that among graph manifolds $M$ whose pieces have  the geometry 
of $\Bbb H^2 \times \R$, $M$ has a geometrization 
  if and only if  it has a Riemannian metric of non-positive sectional curvature
  by  \cite{L} and \cite{LS}.
Hence we can rephrase our results as follows:
\begin{cor}[Graph manifolds of non-positive curvature]\label{cor.nonpositive}
Let $M$ be a closed graph manifold such that 
each piece has the geometry of $\Bbb H^2\times \R$.
 Assume $M$ has a Riemannian
metric of non-positive curvature. Then 
the conclusion of Theorem \ref{metric}
and Corollary \ref{main} holds. 
\end{cor}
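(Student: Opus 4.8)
The plan is to deduce Corollary~\ref{cor.nonpositive} from Theorem~\ref{metric} and Corollary~\ref{main} by invoking the characterization of non-positively curved graph manifolds recorded in the paragraph preceding the statement, namely the results of Leeb \cite{L} and \cite{LS}. Concretely, the first step is to recall the equivalence: for a closed graph manifold $M$ all of whose JSJ pieces carry the geometry of $\Bbb H^2\times\R$, $M$ admits a Riemannian metric of non-positive sectional curvature if and only if $M$ has a geometrization in the sense of this paper, i.e.\ each piece can be given a metric of type $\Bbb H^2\times\R$ so that every gluing map along the boundary tori/Klein bottles is an isometry. This equivalence is exactly what \cite{L} and \cite{LS} provide, and it depends only on the topology of $M$.

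Granting this, the argument is short. Assume $M$ carries a metric of non-positive curvature. By the equivalence above, $M$ has a geometrization, so $M$ satisfies the hypotheses of Theorem~\ref{metric}: each piece has the $\Bbb H^2\times\R$ geometry and the gluing is isometric. Hence $\Bbb R\times M$ admits a $C^\infty$ eventually warped cusp metric $g$ with sectional curvature $C\le K<0$ for some $C<0$, and by rescaling we may take $C=-1$; this is the conclusion of Theorem~\ref{metric}. The same geometrization hypothesis is what Corollary~\ref{main} requires, and since $\dim M = 3 \le 4$ the Whitehead-group condition in Ontaneda's Theorem~\ref{thm.ontaneda} is not needed; applying Theorem~\ref{thm.ontaneda} to $B=M$ with the metric $g$ then yields a $4$-dimensional, complete, non-compact Riemannian $N$ of finite volume with $-1\le K<0$, a compact $C\subset N$ such that $N\setminus C$ has two components, each diffeomorphic to $M\times(0,\infty)$. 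This is the conclusion of Corollary~\ref{main}.

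The only substantive ingredient is the cited equivalence of \cite{L} and \cite{LS}, and within it the direction we use — extracting an isometric geometrization from an \emph{arbitrary} non-positively curved metric — is the delicate one, since it requires normalizing the metric on each Seifert piece to be genuinely of type $\Bbb H^2\times\R$ and arranging the boundary identifications to be isometries; the converse direction (a geometrization gives a non-positively curved metric, after smoothing the $C^1$ glued metric near the flat boundary pieces) is more classical. As we are permitted to quote \cite{L} and \cite{LS}, there is no obstacle in the present reduction: the real work has already been done in Theorem~\ref{metric}, whose proof occupies the body of the paper. One should, however, double-check that the notion of geometrization used in \cite{L,LS} coincides with the one fixed here (as noted in Remark~\ref{rem.buyalo} for the \cite{BK} terminology), so that the hypotheses of Theorem~\ref{metric} are met verbatim.
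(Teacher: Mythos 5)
Your proposal is correct and follows exactly the paper's route: the corollary is deduced by quoting the equivalence of Leeb \cite{L} and Leeb--Scott \cite{LS} between admitting a non-positively curved metric and having a geometrization, and then applying Theorem \ref{metric} and Corollary \ref{main}. The paper offers no further argument beyond this reduction, so your write-up matches it.
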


\subsection{High dimensional graph manifolds}
There are several notions of high dimensional graph manifolds 
(cf.~\cite{sisto}) and 
one can prove a high dimensional version of Theorem \ref{metric}.
The main part of the proof of the theorem is by constructing a
suitable Riemannian metric, which is same for the high dimensional case.

Fix $n\ge 2$ and $m \ge 1$. Let $X$ be an $n$-dimensional complete, non-compact, hyperbolic manifold of finite volume
such that the cross-section of each cusp is an $(n-1)$-dimensional torus. 
Let $\Sigma \subset X$ be a compact manifold with boundary obtained by truncating a sufficiently 
small neighborhood of each cusp from $X$
so that each boundary component is a flat torus.  The interior of $\Sigma$ is 
diffeomorphic to $X$.
Take a Riemannian product $W=\Sigma \times T^m$, where 
$T^m$ is an $m$-dimensional flat torus. Each boundary component of
$W$ is an $(n+m-1)$-dimensional torus. We call $W$ a piece, and 
$\Sigma$ the base. 

Suppose a closed $(n+m)$-dimensional manifold 
$M$ is obtained from pieces with various bases by gluing pairs of boundary components of 
the pieces by diffeomorphisms,  then we call $M$ a {\em high dimensional graph manifold}.
We say $M$ has {\em geometrization} if all gluing maps are isometric
with respect to the product metric on the pieces.

\begin{thm}[High dimensional graph manifolds]\label{high.dimension}
Let $M$ be an $(n+m)$-dimensional high dimensional graph manifold.
Assume $M$ has a geometrization. 
Then $M$ carries a metric of non-positive curvature, so that 
$\Wh(M)$ vanishes. 
Also, there is a complete Riemannian 
metric $g$ on $\Bbb R \times M$ that is an eventually warped cusp metric
with the sectional curvature $K$ satisfying $C \le K <0$
for some constant $C<0$.
\end{thm}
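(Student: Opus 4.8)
The plan is to follow the two-step strategy of Theorem~\ref{metric}, checking at each step that nothing depends on the dimensions $n,m$: the only properties of a piece $W=\Sigma\times T^m$ that get used are that the base $\Sigma$ carries a finite-volume metric of constant curvature $-1$ whose cusp cross-sections are flat $(n-1)$-tori, and that the fiber $T^m$ is a flat torus.

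\smallskip

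\emph{The non-positively curved metric, hence $\Wh(M)=0$.} On each piece put the Riemannian product of the truncated hyperbolic metric on $\Sigma$ and the flat metric on $T^m$; its sectional curvature lies in $[-1,0]$. Near a boundary component the hyperbolic metric has the warped form $dt^2+e^{-2t}\gamma$ with $\gamma$ flat on $T^{n-1}$, so the piece metric is $dt^2+e^{-2t}\gamma+g_{T^m}$ there. On a small collar I would replace $e^{-t}$ by a smooth convex function that agrees with $e^{-t}$ to infinite order away from the boundary and is constant in a neighborhood of it; such a function exists because passing from slope $-e^{-t}<0$ to slope $0$ only requires the second derivative to be nonnegative, and the value at the boundary can be kept unchanged. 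Convexity of the warping function keeps all sectional curvatures $\le 0$, and since the boundary value is unchanged the induced flat metric on each boundary torus is unchanged. After the normalization noted in the introduction (all fibers and all boundary components of length $c$), each boundary component acquires a collar that is a metric product $(-\delta,\delta)\times T^{n+m-1}$ of an interval with a fixed flat torus; gluing the pieces by the isometric gluing maps therefore yields a genuinely smooth Riemannian metric on $M$ with $K\le 0$. By the work of Farrell and Jones on non-positively curved manifolds, a closed manifold admitting a metric of non-positive curvature has vanishing Whitehead group, so $\Wh(M)=0$; this is exactly what is needed to apply Theorem~\ref{thm.ontaneda} when $\dim M=n+m>4$.

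\smallskip

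\emph{The eventually warped cusp metric on $\R\times M$.} This is carried out exactly as in the proof of Theorem~\ref{metric}. Toward $r\to+\infty$ one takes $g=dr^2+e^{2r}g_M$ with $g_M$ the metric just constructed; a computation with the warped-product curvature identities gives $K(\Pi)=-1+e^{-2r}\,\kappa(\Pi)\,|V(\Pi)|^2$ for every $2$-plane $\Pi$, where $0\ge\kappa(\Pi)\ge\inf K_{g_M}$ and $0\le|V(\Pi)|\le1$, so on $[D,\infty)\times M$ the sectional curvature lies in a fixed interval $[C,-1]$, and condition~(3) of Theorem~\ref{thm.ontaneda} holds with $g_B=g_M$. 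Toward $r\to-\infty$ one cannot keep warping, since warping the constant-curvature-$(-1)$ directions of $g_\Sigma$ by $e^{2r}$ drives the curvature to $-\infty$; instead, over each $\R\times W_i$ one uses a doubly warped model of the shape $dr^2+a(r)^2 g_{\Sigma_i}+b(r)^2 g_{T^m}$ in which the hyperbolic base is held essentially fixed ($a$ convex, bounded, eventually constant) while the flat fiber torus is collapsed, $b(r)=e^{cr}\to 0$, so that $(-\infty,0]\times M$ has finite volume. The only planes on which such a product-type metric fails to be \emph{strictly} negatively curved are the mixed base--fiber planes; these are made strictly negative by letting the fiber directions twist as one moves in the base, the twist being matched across the JSJ loci $\R\times T^{n+m-1}$ by the isometry of the gluing maps. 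Interpolating $C^\infty$ between the $r\to-\infty$ regime, the matching at the gluing loci, and the $r\to+\infty$ warped regime, with uniform two-sided curvature bounds, produces the desired complete metric with $C\le K<0$. Because $g_{\Sigma_i}$, $g_{T^m}$, the warping functions and the twist term enter the curvature computations only through the Riemannian geometry of $\Sigma_i$ (finite-volume hyperbolic with flat torus cusps) and of $T^m$ (flat), every estimate is verbatim the one from the three-dimensional case, with $S^1$ replaced by $T^m$ and the hyperbolic surface replaced by the hyperbolic $n$-manifold $\Sigma_i$.

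\smallskip

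\emph{The main obstacle} is the one already faced, and solved, in Theorem~\ref{metric}: near $r=-\infty$ the cross-sections must collapse (for finite volume) while all sectional curvatures, including the mixed base--fiber ones, stay strictly negative and uniformly bounded; this is impossible piece by piece, since a Riemannian product $\Sigma\times T^m$ has flat mixed planes, and it is precisely the twisting of the fibers synchronized with the isometric gluings that resolves it, after which a careful $C^\infty$ interpolation with curvature control is needed. For the present theorem there is nothing new to prove at this point: the construction and all curvature estimates are literally those of the three-dimensional argument, and the only thing to observe is that they do not see the dimensions of the hyperbolic base or of the torus fiber.
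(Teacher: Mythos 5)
Your overall architecture agrees with the paper's: Theorem \ref{high.dimension} is proved by rerunning the proof of Theorem \ref{metric} with $S^1\times S^1$ replaced by $T^l\times T^m$, and the whole point is that the model metric of Proposition \ref{estimate} on $\R\times\R\times\R^l\times\R^m$ is constructed and estimated for arbitrary $l,m$ from the outset (the paper only adds a remark about rescaling the boundary tori and the fibers by a common constant so the gluing stays isometric). Your first step (convex interpolation of the cusp warping function to a constant, product collars, isometric gluing, Farrell--Jones for $\Wh(M)=0$) is essentially Proposition \ref{estimate}(7) and is fine.

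The gap is in your description of the $r\to-\infty$ regime. You propose $dr^2+a(r)^2g_{\Sigma_i}+b(r)^2g_{T^m}$ with $a$ ``eventually constant'' and only the fiber collapsing, and you claim the resulting flat mixed base--fiber planes are ``made strictly negative by letting the fiber directions twist as one moves in the base.'' This cannot work. If the twist is the holonomy of a flat bundle --- which is all that is compatible with gluing flat tori by isometries --- the metric is locally isometric to the untwisted product and the mixed planes remain flat. If instead you mean a connection with curvature, O'Neill's formula for a mixed plane with totally geodesic fibers gives $K(X,V)=|A_XV|^2/(|X|^2|V|^2)\ge 0$, so twisting pushes these curvatures in the wrong direction. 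The paper's mechanism is different and does not involve the gluing at all: the base is scaled by $h(r)=1+e^r$, which is strictly increasing and never constant, and the fiber by $H(r)=b\,e^{R(r)}h(r)$; for a metric $dr^2+\sum_i g_i\,dx_i^2$ every coordinate $2$-plane tangent to a cross-section $\{r=\mathrm{const}\}$ then has curvature $K_{ij}=-g_i'g_j'/(4g_ig_j)<0$ (e.g.\ $K_{2,2+\alpha}=-h'H'/(hH)$), strictly negative though tending to $0$ as $r\to-\infty$, which is all that $K<0$ requires. The other delicate point you pass over is the interpolation region where $F(r,t)$ depends on both variables: there the curvature tensor acquires the off-diagonal term $R_{1jj2}=-h^2FF_{rt}$, and strict negativity of $K_\sigma$ for non-coordinate planes requires the inequality $(R_{1jj2})^2<R_{1jj1}R_{2jj2}$ of Lemma \ref{curvature}. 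That verification is the actual content of Proposition \ref{estimate}, and it --- not a dimension count --- is what the high-dimensional theorem inherits verbatim.
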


\begin{rem}
We only consider a product metric on each piece, but we can formulate the result for locally product metrics
as in Theorem \ref{metric}.
\end{rem}

As before, it follows from Theorem \ref{thm.ontaneda}:
\begin{cor}\label{high.dimension.end}
Let $M$ be an $(n+m)$-dimensional high dimensional graph manifold.
Assume it has a geometrization. 
Then there exists an $(n+m+1)$-dimensional, complete, non-compact
Riemannian manifold $N$ of finite volume, of the sectional curvature
$K$ satisfying $-1 \le K <0$, and with $M$ appearing as an end. 
\end{cor}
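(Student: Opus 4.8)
The plan is to deduce the corollary directly from Theorem~\ref{high.dimension} and Ontaneda's Theorem~\ref{thm.ontaneda}, applied with $B=M$. First I would feed the given high dimensional graph manifold $M$ (which is closed, by definition) into Theorem~\ref{high.dimension}: since $M$ has a geometrization, this produces a complete Riemannian metric $g$ on $\R\times M$ that is an eventually warped cusp metric with $C\le K<0$ for some $C<0$, and it simultaneously records that $\Wh(M)$ vanishes. By the remark following Theorem~\ref{thm.ontaneda}, rescaling $g$ by a positive constant we may arrange $C=-1$; rescaling preserves the warped form $dr^2+e^{2r}g_M$ on the cusp end, only adjusting the constant $D$ and the fiber metric $g_M$. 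So henceforth we have $-1\le K<0$.

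Next I would verify the three hypotheses of Theorem~\ref{thm.ontaneda} with $B=M$. Condition~(1) holds with $C=-1$ by the previous step. Condition~(2), that $(-\infty,0]\times M$ has finite $g$-volume, is precisely the assertion that $g$ is a cusp metric, part of the conclusion of Theorem~\ref{high.dimension}. Condition~(3), that on $[D,\infty)\times M$ the metric is of the form $dr^2+e^{2r}g_M$, is exactly the ``eventually warped'' property. Finally, although $\dim M=n+m$ may exceed $4$, the standing hypothesis of Theorem~\ref{thm.ontaneda} is met because $\Wh(M)=0$, which is why that vanishing is built into Theorem~\ref{high.dimension}. Hence Theorem~\ref{thm.ontaneda} applies and yields a compact $(n+m+1)$-manifold $\bar N$ with $\partial\bar N=M\sqcup M$ whose interior $N$ carries a complete finite-volume metric with $-1\le K<0$. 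Each of the two boundary components is an end of $N$ diffeomorphic to $M$, which is the assertion; concretely one may take the ``compact set'' to be the complement of two product collar neighborhoods, each diffeomorphic to $M\times(0,\infty)$, of the two ends.

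I do not expect a genuine obstacle at this level: granted Theorem~\ref{high.dimension} and Theorem~\ref{thm.ontaneda}, the argument is just a matter of matching hypotheses. The single point deserving care is the dimension issue: one must not invoke the ``$\dim B\le 4$'' alternative of Theorem~\ref{thm.ontaneda} but rather the vanishing of $\Wh(M)$, and it is worth stating explicitly that Theorem~\ref{high.dimension} furnishes exactly this. All of the real work sits in Theorem~\ref{high.dimension} itself — the explicit construction of an eventually warped cusp metric of pinched negative curvature on $\R\times M$ — which we are taking as given here.
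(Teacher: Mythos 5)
Your proposal is correct and follows exactly the paper's route: the corollary is obtained by feeding the eventually warped cusp metric and the vanishing of $\Wh(M)$ from Theorem~\ref{high.dimension} into Theorem~\ref{thm.ontaneda}, with the Whitehead-group alternative replacing the $\dim B\le 4$ hypothesis. Your explicit attention to that dimension point matches why the paper builds $\Wh(M)=0$ into the statement of Theorem~\ref{high.dimension}.
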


\subsection{The other construction}\label{section.construction}
We discuss the other family of examples of ends. 
This family contains manifolds of various dimension, and 
in dimension 3, it contains all flip manifolds without a piece
whose base surface has genus at most 1.
Although it is not necessary we only treat the orientable case
to make the account simple and clear. 

A manifold in this family is also obtained by 
gluing pieces along their boundary, 
and each boundary component is a circle bundle over a circle bundle over a hyperbolic
manifold $N$. If $\dim N=0$,  the boundary is a torus
and we obtain graph-manifolds. 

Here is a precise description.
Let $M_i$, $i=1,2$, be $n$-dimensional closed
hyperbolic manifolds, and $N_i$ totally
geodesic, closed submanifolds of codimension 
two in $M_i$, respectively, such that $b:N_1 \to N_2$
is an isometry.
For a sufficiently small $\epsilon>0$, 
let $P_i$ be $S^1$-bundles over
$V_i=M_i\backslash N_\epsilon(N_i)$, respectively, with 
Riemannian metrics which are locally product
of the hyperbolic metric on the base manifolds and the circle. 

$\partial P_i= P_i|\partial V_i$  are flat torus-bundles over $N_i$, 
respectively. We glue $P_1, P_2$ along their boundaries
by a bundle map whose base map is the isometry $b:N_1\to N_2$ and 
on the fiber it is a diffeomorphism, for example, a trivial map or a flip map,
as in the graph manifold case. 
This gives an $(n+1)$-dimensional manifold, $W$. 
If the bundle map is an isometry, then we say it satisfies  {\em the gluing condition}
and $W$ has a {\em geometrization}. 

Then 
\begin{thm}[Theorem \ref{thm2}]\label{thm3}
Assume $W$ has a geometrization. Then 
$W$ carries a metric of non-positive curvature,
so that $\Wh(W)$ vanishes. 
Also, $\R \times W$ carries a complete Riemannian metric
that is an eventually warped cusp metric with $C \le K <0$ for
some constant $C<0$.
\end{thm}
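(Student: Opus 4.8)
The plan is to follow the two-stage strategy already used for Theorem~\ref{metric}. In Stage~1 I would put on the closed manifold $W$ a smooth, complete, nonpositively curved metric $g_W$; since such a $W$ is aspherical, the Farrell--Jones theorem for nonpositively curved closed manifolds gives $\Wh(W)=0$, which is precisely what lets Theorem~\ref{thm.ontaneda} be applied even though $\dim W=n+1$ may exceed $4$. In Stage~2 I would build on $\R\times W$ a complete metric $g$ that is (multiply) warped over $g_W$, has pinched negative curvature $C\le K<0$, has finite volume on $(-\infty,0]\times W$, and is exactly $dr^{2}+e^{2r}g_W$ for $r\ge D$; then Theorem~\ref{thm.ontaneda} applies with $B=W$, $g_B=g_W$.

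\emph{Stage 1: the metric on $W$.} Each piece $P_i$ already carries a nonpositively curved metric, locally the product of the hyperbolic metric on the base $V_i$ and the circle fibre, so the only work is to make the gluing smooth without creating positive curvature along the gluing hypersurface. Near $\partial V_i$ one uses Fermi coordinates about the totally geodesic $N_i\subset M_i$, in which the hyperbolic metric on the deleted tube reads $dt^{2}+\sinh^{2}t\,d\theta^{2}+\cosh^{2}t\,g_{N_i}$ and $P_i$ adds a flat fibre coordinate $d\varphi^{2}$. As in Leeb's and Buyalo--Kobel'skij's constructions for graph manifolds, I would replace on a small collar $t\in[\epsilon-\delta,\epsilon]$ the coefficient functions $\sinh t$ and $\cosh t$ by monotone convex functions that are constant near $t=\epsilon$; the Jacobi field curvature estimate for warped metrics of this shape keeps $K\le 0$ and turns a neighbourhood of $\partial P_i$ into a Riemannian product $(\epsilon-\delta,\epsilon]\times(\text{flat }T^{2}\text{-bundle over }N_i)$, the $T^{2}$ being the normal circle times the fibre circle. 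Fixing in advance the tube radius $\epsilon$ and the fibre length so that the two normal-circle lengths and the two fibre lengths agree, the gluing bundle map --- an isometry over $b\colon N_1\to N_2$ which on the $T^{2}$ fibre is the trivial map or the flip --- matches these product collars isometrically, so the glued metric $g_W$ on $W$ is smooth and nonpositively curved. Hence $W$ is aspherical and $\Wh(W)=0$.

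\emph{Stage 2: the eventually warped cusp metric.} For $r\ge D$ take $g=dr^{2}+e^{2r}g_W$; since $K_{g_W}\le 0$, the warped-product curvature identities give $K_g\le-1<0$ on every plane, and $K_g\ge e^{-2D}\inf_W K_{g_W}-1>-\infty$, so $K_g$ is pinched there, while the volume element carries the factor $e^{(\dim W)r}$ that yields finiteness on the $(-\infty,0]$-half. One cannot continue with $dr^{2}+e^{2r}g_W$ for $r\le D$: in the directions coming from the hyperbolic factors $N_i$ and $M_i$, where $K_{g_W}<0$, the lower bound would be destroyed as $r\to-\infty$ (this is the standard reason the cusp must be only \emph{eventually} warped). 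Instead, for $r\le D$ one passes to a multiply warped product: split $g_W$ locally as (rescaled hyperbolic-base part)~$\oplus$~(circle-type part), warp the base part by a positive function $\phi(r)$ that is increasing and strictly convex with $\phi'/\phi,\ \phi''/\phi$ bounded, $\phi$ bounded on $(-\infty,0]$, and $\phi\equiv e^{r}$ for $r\ge D$, and warp the circle-type part by $\psi(r)=e^{r}$; over the gluing collars, where a flip may interchange the two circle factors of the boundary $T^{2}$, treat both factors symmetrically (warp all of $T^{2}$ by $\psi$) and warp the $N_i$-directions by $\phi$, so that the recipe is consistent across the gluing. A direct computation with the doubly warped curvature formulas --- slice planes contributing $\phi^{-2}\operatorname{sec}-(\phi'/\phi)^{2}$, $\psi^{-2}\operatorname{sec}-(\psi'/\psi)^{2}$ or $-(\phi'/\phi)(\psi'/\psi)$, radial planes contributing $-\phi''/\phi$ or $-\psi''/\psi$ --- shows all sectional curvatures lie in $[C,0)$ for a suitable $C<0$, while at every point of $W$ the circle-type part is nonempty, so the factor $\psi^{k}=e^{kr}$ with $k\ge 1$ forces the slice volumes to decay like $e^{r}$ as $r\to-\infty$ and the cusp has finite volume. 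Since $\phi=\psi=e^{r}$ for $r\ge D$, the metric $g$ is an eventually warped cusp metric, and Theorem~\ref{thm.ontaneda} completes the proof.

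\emph{Main obstacle.} The delicate part is the coordination in Stage~2. The local splitting of $g_W$ into a base part and a circle part is \emph{not} globally consistent: a normal circle of $N_i$ that sits inside the base $V_i$ deep in a piece must, near a flipped gluing, be treated as a fibre direction, so its warping factor has to be interpolated from $\phi$ to $\psi$ on a tube near $\partial V_i$, and this interpolation --- together with the transition of $\phi$ on $r\in[0,D]$ --- must be arranged so that the sectional curvature stays in $[C,0)$ everywhere. Securing the uniform lower bound $K\ge C$ in exactly the hyperbolic directions where the naive metric $dr^{2}+e^{2r}g_W$ fails, while keeping everything compatible with the trivial and flip gluings, is the heart of the argument. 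A secondary technical point is redoing the nonpositive-curvature estimate of Stage~1 for the present model --- the base of a piece being a hyperbolic $n$-manifold with a codimension-two totally geodesic tube removed, and the circle bundle possibly nontrivial --- rather than the truncated hyperbolic surface of the graph-manifold case; the Jacobi field computation is of the same kind but must be carried through for this $\sinh/\cosh$ model.
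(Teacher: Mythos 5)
Your strategy coincides with the paper's: a nonpositively curved metric on $W$ built from the locally product metrics on the pieces with product collars near the gluing locus (giving $\Wh(W)=0$ via Farrell--Jones), followed by a metric on $\R\times W$ that warps the circle directions by roughly $e^{r}$ and the hyperbolic directions by a bounded function for $r\ll 0$, becoming $dr^{2}+e^{2r}g_W$ for large $r$. But the step you single out as the ``main obstacle'' is not an obstacle you have overcome --- it is the actual content of the proof, and as written your Stage~2 does not close it. The warping factor of the meridian circle cannot be a function of $r$ alone: deep in a piece it must agree with the $\sinh$-coefficient of the hyperbolic tube metric (hence be independent of $r$), while at the gluing boundary it must shrink like $e^{r}$ so that the flip map remains an isometry at every $r$. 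So you are forced to use a coefficient $F(r,t)$ depending on both the cusp parameter $r$ and the tube parameter $t$ (the paper takes $F(r,t)=b\,e^{R(r)}f(t-R(r))$ with a cut-off $R$ and a smoothed exponential $f$, then interpolates to $\sinh(t-5)$). Once the coefficient depends on two variables, the metric is no longer a doubly warped product, and the curvature formulas you list (slice planes, radial planes) are incomplete: there is a nonzero mixed component $R_{r\tau\tau t}=-h^{2}FF_{rt}$, so negativity of $K$ on a general $2$-plane requires the discriminant inequality $(R_{r\tau\tau t})^{2}< R_{r\tau\tau r}\,R_{t\tau\tau t}$, and the boundedness of $K$ requires controlling $F_{rt}/F$ as well. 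Moreover $F_{rr}\ge 0$ fails, so even the coordinate-plane curvature $K_{r\tau}$ is not obviously negative; the paper needs the specific inequalities $f'\le f$, $f'\le f''$ and the convexity bound $-\tfrac12\le R''\le 0$ to push this through.

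Concretely, then, the gap is twofold: (i) you never exhibit the interpolating function joining the $e^{r}$-regime at the boundary torus to the fixed $\sinh/\cosh$ tube metric in the interior and to the pure warped product $dr^{2}+e^{2r}g_W$ for large $r$; and (ii) the curvature verification you propose is the one valid for warping functions of $r$ only, which cannot apply to the metric you actually need. Everything else in your outline (the product collars and the flip/trivial gluing in Stage~1, the finite-volume estimate driven by the $e^{r}$ factor on the fibre circle, the use of Farrell--Jones and of Ontaneda's theorem) matches the paper and is sound, but without an explicit $F(r,t)$ and the accompanying estimates --- the analogues of the paper's Lemmas on $f'/f$, $f''/f$, $F_{rt}$ and the inequality $(R_{1jj2})^{2}<R_{1jj1}R_{2jj2}$ --- the pinching $C\le K<0$ in the transition region is asserted, not proved.
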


Combining Theorem \ref{thm3} and Theorem \ref{thm.ontaneda}
we obtain:
\begin{cor}[Piecewise $S^1$-bundles]
Assume $W$ has a geometrization. Then 
$W$ appears as an end of an $(n+2)$-dimensional  
Riemmanian manifold $Z$ that is complete, non-compact,
of finite volume, with the sectional curvature
$K$ satisfying $-1 \le K <0$.
\end{cor}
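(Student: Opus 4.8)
This corollary is an immediate consequence of Theorem~\ref{thm3} together with Ontaneda's theorem (Theorem~\ref{thm.ontaneda}); the only work is checking that the hypotheses of the latter are met. First I would invoke Theorem~\ref{thm3} to obtain on $\R\times W$ a complete Riemannian metric $g$ that is an eventually warped cusp metric with sectional curvature $C\le K<0$ for some constant $C<0$. By the remark following Theorem~\ref{thm.ontaneda}, rescaling $g$ lets us assume $C=-1$, so hypothesis (1) of Theorem~\ref{thm.ontaneda} holds. The defining property of a cusp metric gives hypothesis (2): $(-\infty,0]\times W$ has finite $g$-volume. The ``eventually warped'' property is exactly hypothesis (3): there is $D>0$ such that on $[D,\infty)\times W$ the metric has the warped form $dr^2+e^{2r}g_W$ for some metric $g_W$ on $W$.

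Next I would check the standing hypothesis of Theorem~\ref{thm.ontaneda} that either $\dim W\le 4$ or $\Wh(W)=0$. For general $n$ the dimension $\dim W=n+1$ is not bounded, but Theorem~\ref{thm3} also asserts that $W$ carries a metric of non-positive curvature, whence $\Wh(W)$ vanishes (this is the standard vanishing of the Whitehead group for fundamental groups of closed non-positively curved manifolds, via Farrell--Jones; it is already packaged into the statement of Theorem~\ref{thm3}). So the hypothesis is satisfied in all dimensions, and there is no dimension restriction to worry about.

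With all hypotheses of Theorem~\ref{thm.ontaneda} verified, its conclusion yields that $W\sqcup W$ bounds a compact manifold $\bar Z$ whose interior $Z$ admits a complete Riemannian metric of finite volume with sectional curvature in $[-1,0)$. Then $Z$ is $(n+2)$-dimensional, complete, non-compact, of finite volume, with $-1\le K<0$, and $\partial\bar Z=W\sqcup W$, so each of the two boundary components is a copy of $W$ appearing as an end of $Z$. This is precisely the assertion of the corollary.

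The only genuinely substantive input here is Theorem~\ref{thm3}, whose proof constructs the required eventually warped cusp metric on $\R\times W$ and the non-positively curved metric on $W$; granting that, the present corollary is pure bookkeeping, and I anticipate no obstacle in this deduction beyond correctly matching the three numbered conditions of Theorem~\ref{thm.ontaneda} to the output of Theorem~\ref{thm3}.
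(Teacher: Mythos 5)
Your proposal is correct and is exactly the paper's argument: the corollary is stated as an immediate combination of Theorem~\ref{thm3} with Theorem~\ref{thm.ontaneda}, with the non-positively curved metric from Theorem~\ref{thm3} supplying the vanishing of $\Wh(W)$ needed when $\dim W>4$. Your bookkeeping of the three hypotheses and the rescaling to $C=-1$ matches the paper's intent.
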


\subsection{Gluing condition}
We examine the gluing condition for a geometrization in the case $n=3$ in some details,
where $n$ is the dimension of $M_i, i=1,2$.
Submanifolds $N_i$, $i=1,2$, are simple closed geodesics, and we denote them
by $\gamma_i$, respectively. By our assumption they have same length.
Let $m_i$ be the meridean curve for $\gamma_i$ in $M_i$.
$X_i$ is an $S^1$-bundle over $M_i\backslash N_\epsilon(\gamma_i)$.
We denote $\sigma_i$ the fiber circle of $X_i$.
With respect to the Riemannian metric, we can measure 
the monodromy (i.e., rotation) along the curve $\gamma_i$ for $\sigma_i$
and $m_i$, respectively. We denote them by 
$0 \le \theta(\sigma_i), \theta(m_i) <2\pi$.

We now consider a bundle map $\phi: \partial X_1
\to \partial X_2$ such that the base map is the isometry $b$
and that it is a flip map on the torus fiber: 
$$\phi(m_1)=\sigma_2, \qquad \phi(\sigma_1)=m_2$$
Then we can arrange $\phi$ to be an isometry, 
ie, the gluing condition is satisfied, 
if and only if 
\begin{equation}\label{mono.1.0}
\theta(m_1)=\theta(\sigma_2), \qquad \theta(\sigma_1)=\theta(m_2)
\end{equation}

We conclude the introduction with examples $\{(M_i,N_i)\}$ that admit circle bundles $X_i$
satisfying (\ref{mono.1.0}), which give $W$ of $\dim W=4$ by Theorem \ref{thm3}.
\begin{ex}[$S^1$-bundle with a given 
monodromy]
Take a closed, oriented, hyperbolic $3$-manifold $M$
with a simple closed (oriented) geodesic $\gamma$
which is non-trivial
in $H_1(M, \R)$.
Let $m$ be a {\it meridian curve}
around $\gamma$ in $M$. Namely, take an $\epsilon$-neighborhood
of $\gamma$ in $M$ for a small $\epsilon>0$. Its boundary is a torus, $T$.
Take a small hyperbolic disc $D$ in $M$
perpendicular to $\gamma$, then set $m= D \cap T$.
Let $\theta(m)$ be the monodromy of $m$ along $\gamma$.
Another way to define $\theta(m)$ is using the universal cover of $M$.
Lift $\gamma$ to an (oriented) infinite geodesic $\tilde \gamma$, then take the element $g \in \pi_1(M)$
such that $\tilde \gamma / g=\gamma$, where $g$ shifts
to the positive direction. Then $g$ rotates $\tilde \gamma$
by $\theta(m)$.

Set $V=M \backslash N_\epsilon(\gamma)$
with a small $\epsilon>0$.
We will construct an $S^1$-bundle over $V$, $X$, 
and glue $X$ and a copy of $X$ along their boundary
and obtain $W$.
Let $\sigma$ denote the fiber circle.
For our construction, we need to arrange $\theta(m)=\theta(\sigma)$.

For example, let $M$ be such that  all of its closed geodesics
are simple (such examples exist, \cite{CR}).
Taking a finite cover if necessary, we may assume 
that $H_1(M,\R)$ is non-trivial \cite{A}.
Let $p:\pi_1(M) \to H_1(M,\R)$
be the homomorphism obtained
from abelianization.
Take any closed (simple) geodesic $\gamma$
with $p([\gamma])\not=0$.
Then $H_1(\gamma, \R)$ injects to 
$H_1(M,\R)$.

Set $\theta_0=\theta(m)$.
Then there is a homomorphism
$h:\pi_1(M) \to S^1$ such that $h([\gamma])=\theta_0$.
Indeed we take a homomorphism
$f:\Z\to S^1$ such that $f(p([\gamma]))=\theta_0$
then set $h=f \circ p$.

Now take an $S^1$-bundle $X$ over $M$,
which is locally a Riemannian product
whose monodromy representation 
of $\pi_1(M)$ to $S^1$ is $h$.
Then $\theta(\sigma)=\theta_0$.

Now take $(M,\gamma)$ and its copy, then this pair
satisfied the condition (\ref{mono.1.0}), so that 
Theorem \ref{thm3} applies.

\end{ex}

{\bf Acknowledgement}
We would like to thank Igor Belegradek
for his interest, many valuable comments and questions. 
He also pointed out an omission of an argument in an earlier version
of the paper. 
We owe Misha Kapovich concerning Corollary \ref{cor.nonpositive}.
We are grateful to Pedro Ontaneda for explaining his work
and clarifying technical issues for us. 
We would like to thank Kenji Fukaya for many 
useful discussions.

\section{Proof of Theorem \ref{metric} and Theorem \ref{high.dimension}}
We will prove Theorem \ref{metric}.
We first treat the case where every piece in a graph manifold 
is the product of a circle and a surface,  then discuss
the general case. 

We then prove Theorem \ref{high.dimension}. The main part
of the proof overlaps with the proof of Theorem \ref{metric}, which 
is Proposition \ref{estimate}.

\subsection{Geometric idea}

We first explain our method to construct a desired metric
on $\Bbb R \times M$, where each piece of $M$
is a trivial bundle over a surface.

As the first step, it is pretty straightforward to construct a Riemannian metric
of non-positive curvature on $M$. 
We review the metric construction, cf, \cite{KL}. 
By assumption the interior of the base surface $\Sigma_i$ of each 
piece $P_i$ has a hyperbolic metric $g_0$ of finite volume. 

Choose a small constant $c>0$.
Truncate the
interior of $\Sigma_i$ with the metric $g_0$ at each cusp so that 
each boundary circle 
has constant geodesic curvature
and has
length $c$. We identify this truncated surface
with $g_0$ and  $\Sigma_i$. 

To express the idea clearly,  we first assume $M$ has a geometrization 
with respect to a product metric on each piece,
$P_i=\Sigma_i \times S^1$, namely it is a Riemannian product
$\Sigma_i \times S^1(c)$, where $S^1(c)$ is 
a circle of length $c$. The curvature
satisfies $-1 \le K \le 0$. Each boundary component of $P_i$
is $S^1(c) \times S^1(c)$, so that we 
can glue $P_i$'s along their torus boundaries 
by the prescribed  gluing maps, which are isometries by our assumption, and obtain a metric
of non-positive curvature 
on $M$. This metric has singularity along the tori
where pieces are glued, but we can smooth it 
out keeping the curvature condition $C \le K \le 0$
for some $C<0$.

In the next step we want to put a desired metric on $\R \times M$, but if we 
consider a warped product
$$ \Bbb R \times _{e^{r}} M$$
it does not work in general by the following reason. 
Since $M$ is compact, volume of 
$(-\infty, 0] \times M$ is finite.
Also, since $M$ satisfies $C< K \le 0$, 
the curvature on $ \Bbb R \times _{e^{r}} M$
satisfies $K<0$, but as $r \to -\infty$, 
the diameter of $M$ tends to $0$ and the 
curvature $K$ tends to $- \infty$, while 
$K$ tends to $-1$ as $r$ tends to $\infty$.
So, this construction violates the curvature 
bound from below for $r \to -\infty$.

In the above warped product, at each $r \in \R$, the manifold
$M$ is rescaled as $e^{r} M$.  As a remedy, 
we use another metric on $M$ for the part $r<0$ 
with $r$ small enough. Take $a>0$
such that $e^r \le c$ if $r<-a$. 
For each $r \in (-\infty,-a]$, 
truncate the initial complete
hyperbolic metric $g_0$ on the interior of $\Sigma_i$ such that the boundary 
circle has length $e^r$, which we denote
by $\Sigma_i(e^r)$. Take a Riemannian product
$\Sigma_i(e^r) \times S^1(e^r)$, which 
is the metric structure on $P_i$ at $r$.  Each boundary 
component is $S^1(e^r) \times S^1(e^r)$.
Now glue them by the given isometries
and obtain the metric on $M$ at $r$,
which we denote by $M_r$.  As before we smooth 
out near the gluing tori. 
In this way we obtain a metric on $(-\infty, -a]
\times M$, which we write as $(-\infty,-a]
\times M_r$.
Notice that volume of $M_r$ is (more or less) proportional 
to $e^r$, so that one expects volume
of $(-\infty,-a] \times M_r$ is finite. Also, we arrange that 
the curvature satisfies $ C \le  K <0$ (cf. \cite{fujiwara}).

So, we try to interpolate 
$(-\infty,-a ] \times M_r$
and $[a,\infty) \times _{e^{r}} M_0$
between $r \in [-a,a]$, where $M_0$
is $M$ with a Riemannian metric, say,
constructed in the previous paragraph
(maybe we rescale it by a constant).
Note that the metric on $M_0$
is fixed for $r \in [a,\infty)$ while
$M_r$ keeps changing for $r \in (-\infty, a]$.
Also, notice that diameter of $M_r$ tends
to $\infty$ as $r \to -\infty$.

Lastly, we address the issue that a metric on a piece
is maybe a locally Riemannian product. 
The piece is topologically a trivial $S^1$-bundle, and 
it has a locally product metric such that the fiber circles
have same length. 
The difference from the Riemannian product case is 
encoded in the monodromy representation 
of the fundamental group of the base $\Sigma$ into $S^1$, viewed as a group,
which acts on the fiber circles by rotations. 
By assumption our manifold $M$ admits a geometrization, ie, pieces are 
glued by isometries along the tori. 
In conclusion, the method we explained above will work in this generality
without any change because we only use the property that the gluing
maps are isometric.

\subsection{Metric construction}
We denote the group of isometries of ${\Bbb R}^n$ 
by $\isom(\Bbb R^n)$.
In the following we consider a product 
$$Y=\Bbb R \times \Bbb R \times \R^l \times \R^m$$ and, for example,  an element of $\isom(\Bbb R^l)$ naturally 
acts on $Y$ by an isometry that is trivial except on $\R^l$.
The Euclidean metric on $\R^l, \R^m$ are denoted by 
$d\rho^2, d\tau^2$, respectively.
We denote a flat torus of dimension $n$ by $T^n$.
For $n=1$, we may also write it as $S^1$.

The goal of the following few subsections is Proposition \ref{estimate},
which shows that a certain Riemannian metric $g$ that is 
invariant by $\isom(\R^l)$ and $\isom(\R^m)$ exists
on $Y$.
The proof of Proposition \ref{estimate} is 
by concretely constructing a metric $g$.
If tori $T^l, T^m$ are given as quotients of $\R^l, \R^m$
by isometric actions, then the metric $g$
descends to 
$$ X=\R \times \R \times T^l \times T^m,$$
which will be used later to prove theorems.

To define the metric $g$, we prepare several functions.
Pick  a $C^\infty$ function, $R$, on $\R$ such that
\[
R(r) =
\begin{cases}
  r &\text{if $r \le 1$},\\
  3 &\text{if $r \ge 5$},
\end{cases}
\qquad R' > 0 \ \text{on $(\,1,5\,)$},
\qquad -\frac12 \le R'' \le 0.
\]
We take a nonnegative $C^\infty$ function, $\lambda$, supported in $[\,-1,1\,]$
and satisfying $\int_{-1}^1 \lambda(x)\,dx = 1$,
then define the \emph{convolution product}
of $\lambda$ and a locally Lebesgue integrable function $\varphi$ on $\R$ by 
\[
\lambda * \varphi(x) := \int_{-\infty}^{\infty} \varphi(t) \lambda(x-t) dt.
\]
Note that $\lambda * \varphi$ is also defined
in the case where $\varphi$ is a finite Borel measure on $\R$.
$\lambda * \varphi$ is a $C^\infty$ function.

Since $\lambda * e^t$ satisfies $(\lambda * e^t)'' = \lambda * e^t$,
we have $\lambda * e^t = c \,e^t$, where $c := (\lambda * e^t)(0)$.
We put
\begin{gather*}
\bar{f}(s) :=
\begin{cases}
  1 &\text{if $s \le 0$},\\
  e^s &\text{if $s > 0$},
\end{cases}
\end{gather*}
then define a $C^\infty$ function by:
\[
f := \lambda * \bar{f}.
\]
By the definition,
\begin{equation} \label{eq:f}
  f(s) = 
  \begin{cases}
    1 &\text{if $s \le -1$},\\
    c \, e^s &\text{if $s \ge 1$}.
  \end{cases}
\end{equation}
We observe
\begin{align}
  \label{eq:fbp}
  \bar{f}'(s) &=
  \begin{cases}
     0 &\text{if $s < 0$},\\
     e^s &\text{if $s > 0$},
  \end{cases}\\
  \label{eq:fbpp}
  \bar{f}''(s) &=
  \begin{cases}
     0 &\text{if $s < 0$},\\
     \delta_0 &\text{if $s = 0$},\\
     e^s &\text{if $s > 0$},
  \end{cases}
\end{align}
where $\delta_0$ denotes Dirac's delta measure at $0$
and we consider the distributional derivative for $\bar{f}''$.
Note that $f' = \lambda * \bar{f}'$ and $f'' = \lambda * \bar{f}''$.
It holds that $f \ge 1$ and $f', f'' \ge 0$.

We pick a $C^\infty$ function, $h$, on $\R$ such that
\[
h(r) =
\begin{cases}
  1 + e^r &\text{if $r \le -1$},\\
  2e^r &\text{if $r \ge 1$},
\end{cases}
\qquad h \ge 1, \ h', h'' > 0 \ \text{on $\R$.}
\]

Let $b$ be a positive constant and $F$ the $C^\infty$ function on $\R^2$ defined by
\[
F(r,t) := b \,e^{R(r)} f(t-R(r)).
\]
(We may take $b := 1$ in this section.  $b$ is needed in the later sections.)
Note that $F_t, F_{tt} \ge 0$,
where $F_t$, $F_{tt}$ are the partial derivatives of $F$.
Note also that $F = bc\,e^t$ for all $t \ge 4$.

On $Y$, we consider the metric
\begin{equation}\label{eq:metric}
g = dr^2 + h(r)^2 \biggl( dt^2 + b^2\,e^{2R(r)} \, d\rho^2 +  F(r,t)^2 \, d\tau^2 \biggr),
\end{equation}
where $d\rho^2 = \sum_{\alpha=1}^l d\rho_\alpha^2$ is the $l$-dimensional Euclidean metric
and $d\tau^2 = \sum_{\beta=1}^m d\tau_\beta^2$ the $m$-dimensional Euclidean metric.
Let us set
\[
g = \sum_{i=1}^n g_i \, dx_i^2,
\]
i.e., $x_1 := r$, $x_2 := t$, $x_{2+\alpha} := \rho_\alpha$, and $x_{2+l+\beta} := \tau_\beta$,
$g_1 := 1$, $g_2 := h(r)^2$, $g_{2+\alpha} := H(r)^2$, $H(r) := b\,e^{R(r)} h(r)$,
$g_{2+l+\beta} := h(r)^2 F(r,t)^2$ for $\alpha=1,2,\dots,l$ and $\beta=1,2,\dots,m$.

Note that, for $t \ge 4$, 
\[
dt^2 + F(r,t)^2 d\tau^2 = dt^2 + bc\,e^{2t} d\tau^2
\]
is a hyperbolic metric.

We calculate the Christoffel symbols:
\begin{align*}
\Gamma_{12}^2 &= \frac{h'}{h},
&\Gamma_{1,2+\alpha}^{2+\alpha} &= \frac{H'}{H},\\
\Gamma_{1,2+l+\beta}^{2+l+\beta} &= \frac{F_r}{F} + \frac{h'}{h},\\
\Gamma_{22}^1 &= -hh',
&\Gamma_{2,2+l+\beta}^{2+l+\beta} &= \frac{F_t}{F},\\
\Gamma_{2+\alpha,2+\alpha}^1 &= -HH',
&\Gamma_{2+l+\beta,2+l+\beta}^1 &= -h^2 F F_r - h h' F^2,\\
\Gamma_{2+l+\beta,2+l+\beta}^2 &= -F F_t
\end{align*}
for $\alpha=1,2,\dots,l$ and $\beta=1,2,\dots,m$.
We have the symmetry $\Gamma_{ij}^k = \Gamma_{ji}^k$.
Except for this, the rest of $\Gamma_{ij}^k$ are zero.

We then calculate the curvature tensor
\[
{R_{ijk}}^l = \partial_i\Gamma^l_{jk} - \partial_j\Gamma^l_{ik}
+ \sum_{m=1}^n (\Gamma^l_{im}\Gamma^m_{jk} - \Gamma^l_{jm}\Gamma^m_{ik}),
\qquad R_{ijkl} = R_{ijk}^l g_l
\]
in the following:
\begin{align*}
R_{1221} = R_{1,2+\alpha,2+\alpha,1} &= - h h'' \\
R_{1,2+l+\beta,2+l+\beta,1} &= - h h'' F^2 - h^2 F F_{rr} - 2 h h' F F_r \\
R_{1,2+l+\beta,2+l+\beta,2} &= - h^2 F F_{rt} \\
R_{2,2+\alpha,2+\alpha,2} &= - h h' H H' \\
R_{2,2+l+\beta,2+l+\beta,2} &= - h^3 h' F F_r - h^2 (h')^2 F^2 - h^2 F F_{tt} \\
R_{2+\alpha,2+\alpha',2+\alpha',2+\alpha} &= - H^2 (H')^2 \\
R_{2+\alpha,2+l+\beta,2+l+\beta,2+\alpha} &=
- h^2 F F_r H H' - h h' F^2 H H' \\
R_{2+l+\beta,2+l+\beta',2+l+\beta',2+l+\beta} &=
- h^4 F^2 F_r^2 - 2 h^3 h' F^3 F_r - h^2 (h')^2 F^4 \\
&\quad - h^2 F^2 F_t^2
\end{align*}
for $\alpha, \alpha' = 1,2,\dots,l$ with $\alpha < \alpha'$
and $\beta,\beta' = 1,2,\dots,m$ with $\beta < \beta'$.
We have the (skew-)symmetry for $R_{ijkl}$.
Except for this, the rest of $R_{ijkl}$ are zero. 
Note that the nonzero $R_{ijkl}$ are only of the form $R_{ijji}$ and
$R_{1jj2}$ up to the (skew-)symmetry.

The sectional curvatures for the plane spanned by
$\{\frac{\partial}{\partial x_i}, \frac{\partial}{\partial x_j}\}$ are:
\begin{align*}
K_{12} &= -\frac{h''}{h},\\
K_{1,2+\alpha} &= -\frac{H''}{H},\\
K_{1,2+l+\beta} &= -\frac{F_{rr}}{F} - \frac{2 h' F_r}{hF} - \frac{h''}{h},\\
K_{2,2+\alpha} &= -\frac{h' H'}{h H},\\
K_{2,2+l+\beta} &= -\frac{F_{tt}}{h^2 F} - \frac{h' F_r}{hF} - \frac{(h')^2}{h^2},\\
K_{2+\alpha,2+\alpha'} &= -\frac{(H')^2}{H^2},\\
K_{2+\alpha,2+l+\beta} &= -\frac{F_r H'}{F H} - \frac{h' H'}{h H},\\
K_{2+l+\beta,2+l+\beta'} &=
- \frac{F_r^2}{F^2}
- \frac{2 h' F_r}{hF}
- \frac{F_t^2}{h^2 F^2}
- \frac{(h')^2}{h^2}
\end{align*}
for $\alpha,\alpha'=1,2,\dots,l$ and $\beta,\beta'=1,2,\dots,m$
with $\alpha < \alpha'$ and $\beta < \beta'$.

To estimate the sectional curvatures, we prepare a few lemmas. 
\begin{lem} \label{lem:f}
\begin{enumerate}
\item $f' \le f$.
\item $f' \le f''$.
\end{enumerate}
\end{lem}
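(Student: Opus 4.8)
The statement to prove is Lemma~\ref{lem:f}: (1) $f'\le f$ and (2) $f'\le f''$, where $f=\lambda*\bar f$, $\bar f(s)=1$ for $s\le 0$ and $\bar f(s)=e^s$ for $s>0$, and $\lambda\ge 0$ is supported in $[-1,1]$ with $\int\lambda=1$. The plan is to reduce everything to pointwise inequalities for $\bar f$ itself and then push them through the convolution, using that $\lambda\ge 0$ so that convolution with $\lambda$ preserves pointwise inequalities between (locally integrable functions or finite measures).

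First I would record the pointwise facts about $\bar f$ and its distributional derivatives from \eqref{eq:fbp} and \eqref{eq:fbpp}: $\bar f\ge 1$, $\bar f'\ge 0$ (it is $0$ on $s<0$ and $e^s$ on $s>0$), and $\bar f''=\bar f'+\delta_0\cdot 1$ as distributions (indeed $\bar f''$ is $0$ on $s<0$, $e^s$ on $s>0$, plus $\delta_0$, and since $\bar f'$ is $0$ on $s<0$ and $e^s$ on $s>0$, we have $\bar f''-\bar f'=\delta_0$ as a nonnegative measure). For (1): on $s<0$, $\bar f'=0\le 1=\bar f$; on $s>0$, $\bar f'=e^s=\bar f$; so $\bar f'\le\bar f$ as $L^1_{loc}$ functions. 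Convolving with $\lambda\ge 0$ preserves the inequality and commutes with the derivative ($f'=\lambda*\bar f'$), giving $f'=\lambda*\bar f'\le\lambda*\bar f=f$. For (2): from $\bar f''=\bar f'+\delta_0$ we get $\bar f''\ge\bar f'$ as measures (their difference $\delta_0$ is a nonnegative measure), hence $f''=\lambda*\bar f''\ge\lambda*\bar f'=f'$.

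I do not expect any genuine obstacle here; the only point that needs a word of care is the justification that convolution against the nonnegative bump $\lambda$ is monotone even when one of the objects is a measure (the $\delta_0$ term in $\bar f''$), and that $f'=\lambda*\bar f'$, $f''=\lambda*\bar f''$ hold — but both of these are already asserted in the text preceding the lemma, so they may be invoked directly. Everything reduces to the two elementary comparisons $\bar f'\le\bar f$ and $\bar f'\le\bar f''$, which are immediate from the explicit formulas.
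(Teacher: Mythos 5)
Your proof is correct and is essentially identical to the paper's argument: the paper likewise observes $\bar f'\le\bar f$ and $\bar f'\le\bar f''$ from \eqref{eq:fbp} and \eqref{eq:fbpp} and concludes by convolving with $\lambda$. You simply spell out the monotonicity of convolution against the nonnegative bump (including the $\delta_0$ term) in more detail than the paper does.
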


\begin{proof}
We see $\bar{f'} \le \bar{f}$ and $\bar{f}' \le \bar{f}''$
from \eqref{eq:fbp} and \eqref{eq:fbpp}.
Taking the convolution product of them with $\lambda$ yields the lemma.
%
\end{proof}

\begin{lem} \label{lem:key}
  \begin{enumerate}
  \item $h \ge 1$, $h', h'' > 0$.
  \item $H, H', H'' > 0$.
  \item $F > 0$, $F_t, F_{tt}, F_r \ge 0$.
  \item The following functions are all uniformly bounded:
  \[
    \frac{f'}{f},\ \frac{f''}{f},\ \frac{h'}{h}, \ \frac{h''}{h}, \ \frac{H'}{H}, \ \frac{H''}{H},
    \ \frac{F_r}{F}, \ \frac{F_{rr}}{F}, \ \frac{F_t}{F}, \ \frac{F_{tt}}{F}.
  \]
  \end{enumerate}
\end{lem}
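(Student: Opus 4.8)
The plan is to check the four items one at a time, directly from the explicit formulas $H=b\,e^{R}h$ and $F(r,t)=b\,e^{R(r)}f(t-R(r))$ and from what is already in hand about $R$, $h$, $f$: namely $b>0$, $h\ge 1$ with $h',h''>0$, $f\ge 1$ with $f',f''\ge 0$, Lemma \ref{lem:f} ($f'\le f$ and $f'\le f''$), $R'\ge 0$, and $-\tfrac12\le R''\le 0$ with $R''\equiv 0$ off $(1,5)$ (since $R$ is affine for $r\le 1$ and constant for $r\ge 5$). Item (1) is nothing but the defining properties of $h$, so I would dispose of it in a line. For item (3), I would note $F>0$ since $b,e^{R}>0$ and $f\ge 1$, then differentiate: $F_t=b\,e^{R}f'(t-R)\ge 0$, $F_{tt}=b\,e^{R}f''(t-R)\ge 0$, and $F_r=b\,e^{R}R'\bigl(f(t-R)-f'(t-R)\bigr)\ge 0$, the last because $R'\ge 0$ and $f-f'\ge 0$ by Lemma \ref{lem:f}(1).

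For item (2) I would set $\phi:=\log H=\log b+R+\log h$ and compute
\[
\frac{H'}{H}=R'+\frac{h'}{h},\qquad
\frac{H''}{H}=(R')^{2}+2R'\,\frac{h'}{h}+R''+\frac{h''}{h}.
\]
Then $H>0$ (as $h\ge 1$) and $H'>0$ (as $R'\ge 0$, $h'/h>0$) are immediate. The positivity of $H''$ is the one point that takes an argument: off $(1,5)$ one has $R''=0$ and the remaining three terms are $\ge 0$ with $h''/h>0$; on $(1,5)$ one has $r>1$, hence $h=2e^{r}$ and $h''/h=1$, which beats $|R''|\le\tfrac12$, so $R''+h''/h\ge\tfrac12>0$. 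I expect this sign bookkeeping — balancing the negative part of $R''$ against $h''/h$ — to be the only real obstacle, and it is precisely why the hypotheses $R''\ge-\tfrac12$ and $h(r)=2e^{r}$ for $r\ge 1$ were built into the constructions.

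For item (4) I would observe that each quotient on the list is, after one routine differentiation, a polynomial with bounded (in fact constant) coefficients in the functions $R'$, $R''$, $\tfrac{h'}{h}$, $\tfrac{h''}{h}$ (evaluated at $r$) and $\tfrac{f'}{f}$, $\tfrac{f''}{f}$ (evaluated at $t-R(r)$): indeed $H'/H$, $H''/H$ are as above, $F_t/F=\tfrac{f'}{f}(t-R)$, $F_{tt}/F=\tfrac{f''}{f}(t-R)$, $F_r/F=R'\bigl(1-\tfrac{f'}{f}(t-R)\bigr)$, and $F_{rr}/F$ comes from one further $r$-derivative of $F_r$, yielding another such polynomial. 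So it suffices to bound the six building blocks, and each is continuous and eventually constant: $R'\in[0,1]$ and $|R''|\le\tfrac12$; $\tfrac{h'}{h}=\tfrac{h''}{h}=\tfrac{e^{r}}{1+e^{r}}\in(0,1)$ for $r\le-1$ and $=1$ for $r\ge 1$, with continuity on $[-1,1]$; and $\tfrac{f'}{f}=\tfrac{f''}{f}=0$ for $s\le-1$, $=1$ for $s\ge 1$ by \eqref{eq:f}, with continuity on $[-1,1]$ since $f\ge 1$. Hence all are bounded on $\R$, completing item (4) and the lemma. Apart from the $H''$ sign check, I anticipate only careful chain-rule bookkeeping and no conceptual difficulty.
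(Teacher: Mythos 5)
Your proposal is correct and follows essentially the same route as the paper: direct differentiation of $H=b\,e^{R}h$ and $F=b\,e^{R}f(t-R)$, using $f'\le f$, $f'\le f''$, $R'\ge 0$, $R''\ge-\tfrac12$, and the fact that $h=2e^r$ for $r\ge 1$ to absorb the negative $R''$ term in $H''$. The only differences are cosmetic (logarithmic derivatives for item (2), and organizing item (4) as polynomials in the six bounded building blocks, which is exactly the list of quotients the paper writes out explicitly).
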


\begin{proof}
(1) is obvious.

We prove (2).
The derivative of $H = b h\,e^R$ is
\[
H' = bh e^R R' + bh' e^R,
\]
which is positive.
Differentiating it again, we see
\[
H'' = be^R(hR'' + h(R')^2 + 2h'R' + h'') \ge be^R(h R'' + h'').
\]
If $r \le 1$, then $h R'' + h'' = h'' > 0$.
If $r > 1$, then $h R'' + h'' > -h/2 + h'' = e^r > 0$.
Therefore $H''$ is positive everywhere.

We prove (3).  It is clear that $F > 0$.
It follows from $f', f'' \ge 0$ that $F_t, F_{tt} \ge 0$.
We see
\[
F_r = b\, R' e^R ( f(t-R) - f'(t-R)),
\]
which is nonnegative by Lemma \ref{lem:f} and $R' \ge 0$.

(4) is clear.

We prove (5).
The boundedness of $f'/f$ and $f''/f$ follow from \eqref{eq:f}.
The boundedness of $h'/h$, $h''/h$, $T'/T$, and $T''/T$ are derived from their definitions.
We see
\begin{align*}
  \frac{H'}{H} &= \frac{hR' + h'}{h},
  &\frac{H''}{H} &= \frac{hR'' + h(R')^2 + 2h'R' + h''}{h},\\
  \frac{F_r}{F} &= \frac{R'(f-f')}{f},
  &\frac{F_{rr}}{F} &= \frac{(R')^2(f-2f' + f'') + R''(f-f')}{f},\\
  \frac{F_t}{F} &= \frac{f'}{f},
  &\frac{F_{tt}}{F} &= \frac{f''}{f},
\end{align*}
which are all bounded.
This completes the proof of the lemma.
\end{proof}

\begin{lem}\label{lem:K_ij}
There is a constant $C<0$ such that 
$C \le K_{ij} <0$ for all $i \neq j$.
\end{lem}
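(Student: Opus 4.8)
The plan is to treat the explicit list of sectional curvatures $K_{ij}$ computed above term by term, establishing the strict upper bound $K_{ij}<0$ and the uniform lower bound separately, and then to take $C:=\inf_{i\ne j}\inf_Y K_{ij}$.

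First I would dispose of the lower bound, which I expect to be routine. Each displayed $K_{ij}$ can be rewritten as a sum of boundedly many products of the quotients $f'/f$, $f''/f$, $h'/h$, $h''/h$, $H'/H$, $H''/H$, $F_r/F$, $F_{rr}/F$, $F_t/F$, $F_{tt}/F$ (for instance $\tfrac{h'F_r}{hF}=\tfrac{h'}{h}\cdot\tfrac{F_r}{F}$, $\tfrac{F_{tt}}{h^2F}=\tfrac1{h^2}\cdot\tfrac{F_{tt}}{F}$, $\tfrac{(H')^2}{H^2}=(\tfrac{H'}{H})^2$, and so on), together with the factors $1/h,1/h^2\le 1$ coming from $h\ge 1$. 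All these quotients are uniformly bounded by Lemma \ref{lem:key}(4)--(5), so every $K_{ij}$ is bounded below by a finite constant, uniformly in $i\ne j$ and over $Y$; since $K_{12}=-h''/h$ equals $-1$ for $r\ge 1$, the infimum $C$ is finite with $C\le -1<0$.

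Next I would handle the upper bound. For every pair other than $\{1,2+l+\beta\}$, the formula for $K_{ij}$ is minus a sum of terms that are all $\ge 0$ by Lemma \ref{lem:key}(1)--(3) ($h,h',h''>0$; $H,H',H''>0$; $F>0$, $F_t,F_{tt},F_r\ge 0$), and in each case at least one of those summands is strictly positive --- $h''/h$, $H''/H$, $\tfrac{h'H'}{hH}$, $(H'/H)^2$, or $(h'/h)^2$ according to the pair --- so $K_{ij}<0$ at once.

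The one case that needs work --- and what I expect to be the only real obstacle --- is
\[
K_{1,2+l+\beta}=-\frac{F_{rr}}{F}-\frac{2h'F_r}{hF}-\frac{h''}{h},
\]
since $F_{rr}$ need not be nonnegative: writing $\tfrac{F_{rr}}{F}=\tfrac{R''(f-f')+(R')^2(f-2f'+f'')}{f}$ (with $f,f'$ evaluated at $t-R(r)$), the first summand is $\le 0$ because $R''\le 0$. The key observation is that $R''$ vanishes outside $(1,5)$. Off $(1,5)$ one has $\tfrac{F_{rr}}{F}=\tfrac{(R')^2(f-2f'+f'')}{f}\ge 0$ by Lemma \ref{lem:f}, so all three summands of $-K_{1,2+l+\beta}$ are $\ge 0$ with $h''/h>0$, giving $K_{1,2+l+\beta}<0$. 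On $(1,5)$ I would instead bound $-\tfrac{F_{rr}}{F}\le\tfrac{-R''(f-f')}{f}\le\tfrac12\cdot\tfrac{f-f'}{f}\le\tfrac12$, using $-R''\le\tfrac12$ and $0\le f-f'\le f$ (Lemma \ref{lem:f}(1) and $f'\ge 0$); since $r\ge 1$ there, $h=2e^r$ and hence $h''/h=1$, so $K_{1,2+l+\beta}\le\tfrac12-1=-\tfrac12<0$. Combining the two halves gives $C\le K_{ij}<0$ for all $i\ne j$. (The constant $b$ plays no role, as it cancels in every quotient $H'/H$, $F_r/F$, $F_{rr}/F$, etc.)
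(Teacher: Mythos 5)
Your proof is correct and follows essentially the same route as the paper: the only delicate case is $K_{1,2+l+\beta}$, and your estimate $-R''(f-f')/f\le \tfrac12$ absorbed by $h''/h=1$ on $(1,5)$ (with $R''=0$ elsewhere) is exactly the paper's inequality $\varphi/h\ge R''f+f\ge f/2$, just organized by a case split on $(1,5)$ instead of at $r=1$. The boundedness argument via the quotients of Lemma \ref{lem:key} also matches the paper's.
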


\begin{proof}
The lemma is readily seen from Lemma \ref{lem:key}
except the negativity of $K_{1,2+l+\beta}$.
We remark that $F_{rr} \ge 0$ does not hold.
We have
\[
K_{1,2+l+\beta} = -\frac{\varphi}{fh},
\]
where
\[
\varphi := h (R')^2 (f - 2f' + f'') + h R'' (f - f') + 2 R' h' (f - f') + h'' f.
\]
If $r \le 1$, then $R = r$, which together with Lemma \ref{lem:f}
implies $\varphi > 0$.
If $r \ge 1$, then $h = 2e^r$ and so
\[
\frac{\varphi}{h} = (R')^2 (f - 2f' + f'') + R'' (f - f') + 2 R' (f - f') + f
\]
By $R' \ge 0$, $-1/2 \le R'' \le 0$, and by Lemma \ref{lem:f}, we obtain
\[
\frac{\varphi}{h} \ge R'' (f-f') + f \ge R'' f + f \ge \frac{f}{2} > 0.
\]
Therefore, $K_{1,2+l+\beta}$ is negative.
\end{proof}

Let $\sigma$ be any $2$-plane (i.e., two-dimensional linear subspace) in
the tangent space at any point of $Y$,
and take an orthogonal basis, $\{u,v\}$, of $\sigma$.
Since $\|u \times v\| = \|u\| \cdot \|v\|$,
the sectional curvature for $\sigma$ is
\[
K_\sigma = \frac{\langle R(u,v)v,u \rangle}{\|u \times v\|^2}
= \frac{\sum_{i,j,k,l} u^i v^j v^k u^l R_{ijkl}}{\sum_{a,b} (u^a)^2(v^b)^2 g_a g_b},
\]
where $u = \sum_i u^i\frac{\partial}{\partial x_i}$,
$v = \sum_j v^j \frac{\partial}{\partial x_j}$.

\begin{lem}\label{curvature}
There is a constant $C<0$ such that 
$C \le K_\sigma <0$ for all $\sigma$.
\end{lem}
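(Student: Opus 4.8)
The plan is to reduce the estimate of $K_\sigma$ for an arbitrary $2$-plane $\sigma$ to the already-established estimates for the coordinate $2$-planes in Lemma~\ref{lem:K_ij}, exploiting the special structure of the curvature tensor computed above: the only nonzero components $R_{ijkl}$ (up to (skew-)symmetry) are of the form $R_{ijji}$ and $R_{1jj2}$. First I would fix a point of $Y$, pass to the $g$-orthonormal coframe $e^i := \sqrt{g_i}\,dx_i$, and write $u,v$ in the dual orthonormal basis $\{E_i\}$; this rescaling changes neither $K_\sigma$ nor the sign structure, so I may assume $g_i\equiv 1$ and treat $R_{ijkl}$ as a tensor on Euclidean space with the stated vanishing pattern. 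After an orthogonal change of basis of $\sigma$ I may take $u,v$ orthonormal, so $K_\sigma=\sum_{i,j,k,l}u^iv^jv^ku^lR_{ijkl}=\langle R(u,v)v,u\rangle$.

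The key step is a curvature-operator argument. Because the nonzero $R_{ijkl}$ are exactly $R_{ijji}$ (for $i\ne j$) together with the ``mixed'' terms $R_{1jj2}$ with $j\ge 3$, the curvature operator $\mathcal R$ acting on $\Lambda^2$ is block-diagonal: in the basis $\{E_i\wedge E_j\}$ it is diagonal except for entries coupling $E_1\wedge E_j$ with $E_2\wedge E_j$ for each $j\ge 3$ (the $R_{1jj2}$ terms; note $R_{1,2+l+\beta,2+l+\beta,2} = -h^2 F F_{rt}$ is the only source of off-diagonal coupling, and $R_{1,2+\alpha,2+\alpha,2}=0$). Thus $\mathcal R$ decomposes as an orthogonal direct sum of $1\times 1$ blocks with entries $-K_{ij}$ (for pairs not of the coupled type) and, for each $j\ge 3$ of ``$\tau$-type'', a $2\times 2$ block
\[
B_j=\begin{pmatrix} -K_{1j} & -a_j \\ -a_j & -K_{2j}\end{pmatrix},
\qquad a_j = R_{1jj2}=-h^2FF_{rt}.
\]
I would show each $B_j$ is positive definite and bounded: its eigenvalues are $\tfrac12\big((-K_{1j}-K_{2j})\pm\sqrt{(K_{1j}-K_{2j})^2+4a_j^2}\big)$, so positivity is equivalent to $\det B_j = K_{1j}K_{2j}-a_j^2>0$, i.e.\ $K_{1j}K_{2j}>a_j^2$. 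Using the explicit formulas, $F_{rt}=F F_{rt}/F$ with $F_{rt}/F = R'(f'-f'')/f$ bounded, while $K_{1,2+l+\beta}\ge f/(2fh)\cdot(\text{positive})$ and $K_{2,2+l+\beta}$ contains the term $-F_{tt}/(h^2F)-\dots$; the product of the two dominates $a_j^2=h^4F^2(F_{rt}/F)^2$ because the ``$h''/h$'' and ``$(h')^2/h^2$'' contributions to $K_{1j}$ and $K_{2j}$ force $K_{1j}K_{2j}\ge \text{const}>0$ uniformly. (This is exactly where one must be careful; see below.) Given positivity and uniform two-sided bounds on all blocks, $\mathcal R$ is a uniformly positive, uniformly bounded operator on $\Lambda^2$, hence $-C'\le -\langle\mathcal R(\omega),\omega\rangle<0$ for every unit decomposable $2$-vector $\omega=u\wedge v$, which is precisely $C\le K_\sigma<0$ with $C=-C'$.

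The main obstacle is the sign/size control of the $2\times 2$ blocks $B_j$, i.e.\ verifying $K_{1,2+l+\beta}\,K_{2,2+l+\beta} > (h^2FF_{rt})^2$ with a \emph{uniform} positive gap: unlike the diagonal case, negativity of each $K_{ij}$ does not by itself give negativity of $K_\sigma$ once off-diagonal curvature terms are present, so one genuinely needs this determinant inequality. I would handle it by splitting into $r\le 1$ and $r\ge 1$ as in the proof of Lemma~\ref{lem:K_ij}: for $r\le 1$ one has $R=r$, $R''=0$, $R'=1$, so $F_{rt}/F=(f'-f'')/f$ which is bounded in absolute value (Lemma~\ref{lem:f}), and $K_{1j},K_{2j}$ are each bounded away from $0$ by an explicit constant; for $r\ge 1$ one uses $h=2e^r$, $-1/2\le R''\le 0$, $R'\ge 0$, and the estimate $\varphi/h\ge f/2$ from Lemma~\ref{lem:K_ij} together with the analogous lower bound for $K_{2j}$ coming from the $(h')^2/h^2$-term. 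In both ranges $F_{rt}/F$ and all the ratios in Lemma~\ref{lem:key}(5) are bounded, so choosing the constants appropriately (shrinking nothing, only enlarging $|C|$) closes the argument. The remaining verification — that the purely diagonal blocks are uniformly two-sided bounded away from $0$ — is immediate from Lemma~\ref{lem:K_ij} and Lemma~\ref{lem:key}(5).
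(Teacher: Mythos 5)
Your reduction coincides with the paper's: since the only nonzero curvature components are $R_{ijji}$ and $R_{1jj2}$ with $j=2+l+\beta$, strict negativity of $K_\sigma$ comes down to positive definiteness of the $2\times 2$ blocks, i.e.\ to the determinant inequality $(R_{1jj2})^2 < R_{1jj1}R_{2jj2}$ (equivalently $K_{1j}K_{2j}>a_j^2$ in the orthonormal frame), and the two-sided bound on $K_\sigma$ comes from bounding the entries; the paper does exactly this, phrased via the quadratic form on decomposable $2$-vectors rather than via the curvature operator. The genuine gap is in how you propose to \emph{verify} the determinant inequality. You want to bound $K_{1j}$ and $K_{2j}$ away from $0$ by explicit constants and bound $a_j$ above. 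But (a) $K_{1j}$ is not uniformly bounded away from zero for $r\le 1$: where $t-R(r)\ge 1$ one has $f=f'=f''$, hence $K_{1j}=-h''/h=-e^r/(1+e^r)\to 0$ as $r\to-\infty$; and even on the set where $F_{rt}\ne 0$ (i.e.\ $t-R(r)$ in the support of $\lambda=f''-f'$), the quantities $K_{1j}$ and $a_j$ degenerate \emph{together} near the edge of that support, so no comparison of uniform constants can close the argument there. And (b) even in the range $r\ge 1$, where one does get $K_{1j}\le -1/2$ and $K_{2j}\le -1$, the resulting bound $K_{1j}K_{2j}\ge 1/2$ need not exceed $\sup a_j^2$, which depends on $\max\lambda$ and is not controlled by anything you invoke. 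Enlarging $|C|$ cannot repair this, because the determinant inequality governs strict negativity of $K_\sigma$, not the lower curvature bound.

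What is actually needed — and what the paper does — is a pointwise, term-by-term comparison rather than a comparison of uniform constants: for $r\le 1$ one proves $F_{rt}^2\le F_{rr}F_{tt}$, which reduces to $(f''-f')^2\le (f-2f'+f'')f''$ and follows from $f'\le f$ and $f'\le f''$ (Lemma \ref{lem:f}); for $r>1$ one matches the term $(R')^2(f''-f')^2$ on the left against the product of the summand $(R')^2(f''-f')$ of $R_{1jj1}$ with the summand $f''$ of $R_{2jj2}$, the remaining summands being positive thanks to $R''\ge -1/2$. The rest of your argument is sound: the block-diagonal structure of the curvature operator, and your bound on the normalized off-diagonal entry $|a_j|=|F_{rt}|/(hF)\le R'\,|f'-f''|/f$ (compactly supported, hence bounded), give the uniform bound $C\le K_\sigma$ exactly as the paper's estimate of $A_{1jj2}$ does. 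Only the positivity step must be replaced by the pointwise inequality.
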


\begin{proof}
As we pointed out, the nonzero $R_{ijkl}$ are only of the form $R_{ijji}$ and
$R_{1jj2}$ up to the (skew-)symmetry for our metric $g$.
Also, $R_{ijji} < 0$ by Lemma \ref{lem:K_ij}.
Therefore, for the proof of the negativity of $K_{\sigma}$,
it suffices to prove that
\[
(u^1)^2 (v^j)^2 R_{1jj1}
+ 2 u^1 (v^j)^2 u^2 R_{1jj2}
+ (u^2)^2 (v^j)^2 R_{2jj2} < 0
\]
for all $u^1,u^2,v^j \neq 0$,
where $j = 2+l+\beta$.
This is equivalent to
\begin{equation} \label{eq:R}
(R_{1jj2})^2 < R_{1jj1} R_{2jj2}.
\end{equation}
We see $(R_{1jj2})^2 = h^4 F^2 F_{rt}^2$ and
\begin{align*}
&R_{1jj1} R_{2jj2}\\
 &=
(hh'' F^2 + h^2 F F_{rr} + 2 hh'FF_r)
(h^3 h' F F_r + h^2 (h')^2 F^2 + h^2 F F_{tt})
\end{align*}

We first assume $r \le 1$.  Note that $R = r$ in this case.
For \eqref{eq:R}, it is sufficient to prove $F_{rt}^2 \le F_{rr}F_{tt}$.
Since $F_{rt} = b e^R(f'-f'')$,
$F_{rr} = b e^R(f - 2f' + f'')$,
and $F_{tt} = b e^R f''$, the inequality $F_{rt}^2 \le F_{rr}F_{tt}$
follows from Lemma \ref{lem:f} and $f \ge 1$.

We next assume $r > 1$.  In this case, we see $h = 2e^r$,
so that \eqref{eq:R} boils down to
\begin{align} \label{eq:RR}
&(R')^2 (f''-f')^2 \\
&< [f + (R''+(R')^2)(f-f') + (R')^2(f''-f') + 2R'(f-f')] \notag\\
&\quad\times [4e^{2r}(f-f') + 4 e^{2r}f + f'']. \notag
\end{align}
We have $f + (R''+(R')^2)(f-f') > 0$ by $R'' \ge -1/2$.
We also have $(R')^2(f''-f')^2 \le (R')^2(f''-f') \times f''$.
Therefore, \eqref{eq:RR} is obtained.
The negativity of $K_{\sigma}$ has been proved.

We prove the boundedness of $K_\sigma$.
It suffices to prove the boundedness of
each
\[
A_{ijkl} := \frac{|u^i v^j v^k u^l R_{ijkl}|}{\sum_{a,b} (u^a)^2(v^b)^2 g_a g_b}.
\]
We have, for all $i < j$,
\[
A_{ijji} \le \frac{|R_{ijji}|}{g_i g_j} = |K_{ij}|,
\]
which is bounded by Lemma \ref{lem:K_ij}.
Let $j := 2+l+\beta$.
If $u^1 u^2 = 0$, then $A_{1jj2} = 0$.
For $u^1 u^2 \neq 0$, setting $s := |u^1/u^2|$, we have
\begin{align*}
A_{1jj2} &\le \frac{|u^1 u^2 R_{1jj2}|}{\sum_a (u^a)^2 g_a g_j}
\le \frac{|R_{1jj2}|}{(s g_1 + (1/s) g_2) g_j}\\
&= \frac{h^2 R' (f'-f'')}{(s+h^2/s)f}
\le \frac{h R' (f'-f'')}{2f},
\end{align*}
which is bounded since $f'-f''$ has compact support.
This completes the proof of Lemma \ref{curvature}.
\end{proof}

\subsection{Properties of $g$}
Let $b,c$ be the constants that previously appeared.

\begin{prop}\label{estimate}
For $l, m >0$, there exists a Riemannian metric $g$ on 
$Y=\Bbb R \times \Bbb R \times \R^l \times \R^m$
that is invariant by $\isom(\R^l)$ and $\isom(\R^m)$ 
satisfying the following {\rm(1)--(7)}.
\begin{enumerate}
\item There is a constant $C <0$ such that the sectional curvature $K$
satisfies $C \le K <0$ on $Y$.
\item Let $T^l, T^m$ be flat tori obtained as quotients of
$\R^l, \R^m$ by isometries. Then $g$ defines a metric on $\R \times \R \times T^l
\times T^m$ such that the volume of the following subset is finite:
 $$\{\,(r,t,\rho,\tau) \mid r \in (-\infty,-1],\ t \in [r-1,2],\ \rho \in T^l,
\ \tau \in T^m\,\}.$$
\item For $r \le 0$ and  $t \le r-1$, 
$$g= dr^2 + h(r)^2 (dt^2 + b^2e^{2r} d\rho^2 + b^2 e^{2r} d\tau^2).$$
\item For  $r\ge0$ and $t \le -1$, 
$$g= dr^2 + h(r)^2 (dt^2 + b^2e^{2R(r)} d\rho^2 + b^2e^{2R(r)} d\tau^2).$$
\item For  $r \in \Bbb R$ and $t \ge 4$,
$$g=dr^2 + h(r)^2(dt^2 + b^2e^{2R(r)}d\rho^2 +b^2c^2e^{2t} d\tau^2).$$
\item 
For $r \ge 5$, $g$ is a warped metric of the form: 
$$g=dr^2+4e^{2r} \hat{g},$$
where $\hat{g}$ is the metric on $\R \times \R^l \times \R^m$ defined by
\[
\hat{g} := dt^2 + b^2 e^6 d\rho^2 + b^2e^6 f(t-3)^2 d\tau^2.
\]

\item The metric $\hat{g}$ in {\rm(6)} has non-positive curvature.
\end{enumerate}

\end{prop}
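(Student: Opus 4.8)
The metric $g$ in Proposition \ref{estimate} is already defined explicitly by \eqref{eq:metric}, so the task is to verify the seven listed properties. I will organize the argument around the explicit form of $g$ together with the local formulas for the functions $R, h, f, F$. The first item (1) is exactly the content of Lemma \ref{curvature}, so nothing new is needed there beyond citing it; note also that invariance of $g$ under $\isom(\R^l)$ and $\isom(\R^m)$ is immediate from \eqref{eq:metric}, since the $\rho$- and $\tau$-dependence enters only through the flat pieces $d\rho^2$ and $d\tau^2$, on which those isometry groups act by isometries, and the coefficients $H(r)^2$ and $h(r)^2F(r,t)^2$ do not depend on $\rho, \tau$. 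Hence $g$ descends to any quotient $\R \times \R \times T^l \times T^m$.

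Items (3)--(6) are pure bookkeeping: on each indicated region I substitute the piecewise formulas for $R, h, f, F$ into \eqref{eq:metric}. For (3), when $r \le 0$ we have $R(r) = r$ and, since $t \le r-1$, the argument $t - R(r) = t - r \le -1$, so $f(t-R(r)) = 1$ by \eqref{eq:f} and thus $F(r,t) = b\,e^{r}$; substituting gives the stated form. For (4), $r \ge 0$ forces $R(r)$ to equal $r$ for $r\le 1$ and then flatten, but in all cases $t \le -1 \le R(r) - 1$ is not quite automatic, so I must check that $t - R(r) \le -1$; since $R(r) \ge 0$ for $r \ge 0$ and $t \le -1$, indeed $t - R(r) \le -1$, whence $f(t-R(r)) = 1$ and $F(r,t) = b\,e^{R(r)}$, giving the claimed expression with the $e^{2R(r)}$ factors in both $d\rho^2$ and $d\tau^2$. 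For (5), $t \ge 4$ gives $t - R(r) \ge 4 - 3 = 1$ since $R \le 3$, so $f(t - R(r)) = c\,e^{t-R(r)}$ and $F(r,t) = bc\,e^t$ (this is exactly the remark "$F = bc\,e^t$ for all $t \ge 4$" already noted after the definition of $F$); substituting yields the hyperbolic-type expression. For (6), when $r \ge 5$ we have $R(r) = 3$ and $h(r) = 2e^r$, so $h(r)^2 = 4e^{2r}$ and $F(r,t) = b\,e^{3}f(t-3)$; factoring $4e^{2r}$ out of the bracketed part of \eqref{eq:metric} produces $g = dr^2 + 4e^{2r}\hat g$ with $\hat g = dt^2 + b^2 e^6\,d\rho^2 + b^2 e^6 f(t-3)^2\,d\tau^2$, matching the stated $\hat g$.

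For (2), the finiteness of the volume, I would compute the volume element of $g$ on the quotient: $d\mathrm{vol}_g = \sqrt{g_1 g_2 \cdots g_n}\, dr\,dt\,d\rho\,d\tau = h(r)\cdot H(r)^l\cdot (h(r)F(r,t))^m \, dr\,dt\,d\rho\,d\tau$ up to the (finite) volumes of $T^l, T^m$. On the region $r \le -1$, $t \in [r-1, 2]$, we have $R(r) = r$, $h(r) = 1 + e^r \le 2$, hence $H(r) = b\,e^r h(r) \le 2b\,e^r$, and $F(r,t) = b\,e^r f(t - r) \le b\,e^r \cdot C_0 e^{\max(t-r,0)}$ where $C_0 = \max(1,c)$; since $t \le 2$ and $r \le -1$ one has $t - r \le 2 - r$, giving $F(r,t) \le C_1 e^r e^{t-r} = C_1 e^{t}$ when $t - r \ge 0$ and $F(r,t) \le C_1 e^r$ otherwise, so in either case $F(r,t) \le C_1 e^{\min(t, 2)} \le C_1 e^2$ is bounded, while $h(r)F(r,t) \le C_2 e^r$. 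Therefore the integrand is bounded by $C_3 e^{(l + m) r}$ on the $t$-slab of width $|r| + 3$, so integrating in $t$ gives $C_3 (|r| + 3) e^{(l+m)r}$, and $\int_{-\infty}^{-1} (|r| + 3) e^{(l+m)r}\,dr < \infty$ since $l + m > 0$. This is the step where one must be a little careful about which local formula for $f$ applies as $t$ ranges over $[r-1, 2]$ while $r \to -\infty$, but the crude bound $f(s) \le C_0 e^{\max(s,0)}$ valid for all $s$ (which follows from $\bar f(s) \le e^{\max(s,0)}$ and convolution with $\lambda$) sidesteps the casework.

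Finally, for (7), the non-positivity of the curvature of $\hat g$ on $\R \times \R^l \times \R^m$: I observe that $\hat g$ is obtained, up to the constant scalings $b^2 e^6$ of the flat factors, as the $r \ge 5$ slice of the construction, and that its sectional curvatures are a specialization of the $K_{ij}$ formulas already computed. Concretely, $\hat g = dt^2 + b^2 e^6 d\rho^2 + (b e^3 f(t-3))^2 d\tau^2$ is a doubly-warped product over the $t$-line with a flat $d\rho^2$ factor (warping function constant) and a $d\tau^2$ factor warped by $\psi(t) := b e^3 f(t-3)$. Its nonzero sectional curvatures are: the $(\rho_\alpha, \rho_{\alpha'})$-planes and $(t, \rho_\alpha)$-planes have curvature $0$; the $(t, \tau_\beta)$-planes have curvature $-\psi''/\psi = -f''(t-3)/f(t-3) \le 0$ by $f'' \ge 0$ and $f > 0$; the $(\rho_\alpha, \tau_\beta)$-planes have curvature $-(\psi')^2 \cdot 0 = 0$ (no, more precisely $0$ since the $\rho$-warping is constant); and the $(\tau_\beta, \tau_{\beta'})$-planes have curvature $-(\psi'/\psi)^2 = -(f'(t-3)/f(t-3))^2 \le 0$. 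All are $\le 0$, so $\hat g$ has non-positive curvature. The main obstacle in the whole proof is not any single deep point but rather keeping the piecewise definitions of $R, h, f, F$ straight across the various regions in (2)--(6) and confirming that the crude exponential bound on $f$ is enough to force integrability in (2); everything else is substitution into formulas already established in the excerpt.
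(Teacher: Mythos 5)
Your treatment of (1) and (3)--(6) is the same as the paper's: (1) is Lemma \ref{curvature} together with the evident rotational invariance of \eqref{eq:metric}, and (3)--(6) are the identical case-by-case substitutions of the piecewise formulas for $R$, $h$, $f$ into \eqref{eq:metric}. For (7) you compute the curvatures of $\hat g$ explicitly as a product of a flat factor with the warped product $\R\times_\psi\R^m$, $\psi(t)=be^3f(t-3)$ convex; this is a little more detailed than the paper's one-line appeal to $f''\ge 0$ and is correct.

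There is, however, a genuine error in your proof of (2), namely the inequality ``$h(r)F(r,t)\le C_2e^r$''. On the part of the slab where $t-r\ge 1$ one has $f(t-r)=c\,e^{t-r}$, hence $F(r,t)=bc\,e^{t}$; for example at $t=2$, $h(r)F(r,2)\to bc\,e^{2}\ne 0$ as $r\to-\infty$, which is not $O(e^{r})$. Consequently your claimed pointwise bound $C_3e^{(l+m)r}$ on the volume density $h\cdot H^l\cdot(hF)^m$ fails; at $t=2$ the density decays only like $e^{lr}$. The conclusion nevertheless survives: your own (correct) observation that $F\le C_1e^{\min(t,2)}\le C_1e^{2}$ already gives $hF\le C_2$, so the density is $\le C_3e^{lr}$, and $\int_{-\infty}^{-1}(|r|+3)e^{lr}\,dr<\infty$ because $l>0$. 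So the fix is simply to drop the spurious factor $e^{mr}$. For comparison, the paper avoids the issue by splitting the slab at $t=r+1$: on $[r+1,2]$ it integrates the hyperbolic factor $e^{mt}$ in $t$ first, obtaining $O(e^{lr})$ with no width factor, and on $[r-1,r+1]$ it uses $f\le ce$ to get $O(e^{(l+m)r})$. Your single crude bound works once corrected, but note that it leans entirely on $l>0$, exactly as the paper's part (i) does.
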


\begin{rem}
\begin{enumerate}
\item[(i)]
$C$ does not depend on $l, m$.
\item[(ii)]
By (5), 
for all $r$ and for $ t \ge 4$ the metric is
$$ g= dr^2 + h(r)^2 (d_{hyp} +b^2 e^{2R(r)} d\rho^2),$$
where $d_{hyp} := dt^2 + b^2e^{2t} d\tau^2$ is a hyperbolic metric
with $K=-1$.
\item[(iii)]
In the proof of Theorem \ref{metric}, setting $l=m=1$, 
$g$ will be used to 
put a Riemannian metric  on a neighborhood of a boundary 
component of $\Bbb R \times P$, where $P=\Sigma \times S^1$ is a piece of the flip-manifold 
$M$.  Outside of the neighborhood, we 
use a metric from a hyperbolic metric on $\Sigma$, which coincides
with the metric $g$ at $t=4$ as in (ii).
For Theorem \ref{high.dimension}, the general form of $g$ is used.

\end{enumerate}
\end{rem}

\begin{proof}
Let $g$ be the metric given by  (\ref{eq:metric}).
\\
(1) By Lemma \ref{curvature}.
\\
(2) Without loss of generality we may assume that 
$\vol(T^l)=1, \vol(T^m)=1$ with respect to $d\rho^2, d\tau^2$,
respectively, since the volume of the concerned set
is proportional to the product $\vol(T^l) \vol(T^m)$ because of the form of $g$. 

For $r \le -1$, we have $h(r)=1+ e^{r}$ and $R(r)=r$. 
We divide the subset into two according to $t$:
\\
(i) The part for $t \in [r+1,2]$. 
Since $t-R(r)=t-r \ge 1$, we have  $f(t-R(r))=ce^{t-R(r)}$, hence 
$$g = dr^2+ (1+e^{r})^2(dt^2 + b^2e^{2r} d\rho^2 + b^2c^2e^{2t} d \tau^2).$$
Fix $r$. 
The metric $dt^2 + b^2c^2e^{2t} d \tau^2$ is hyperbolic, 
and its volume
for the part $t \in [r+1, 2]$, $\tau \in T^m$
is at most $b^m c^m \int _{-\infty}^2 e^{mt}\, dt = b^m c^m e^{2m}$.
Hence volume of the part $t \in [r+1, 2], \rho \in T^l, \tau \in T^m$
for the metric $dt^2 + b^2e^{2r} d\rho^2 + b^2c^2e^{2t} d \tau^2$
is at most  $b^{l+m} c^m e^{2m} e^{lr}$. 
Now the $g$-volume for the part 
$r \le -1, t \in [r+1, 2], \rho \in T^l, \tau \in T^m$
is, since $1+e^{r} \le 2$,  at most 
$2^{l+m+1} b^{l+m} c^m e^{2m} \int_{-\infty}^{-1} e^{lr} \, dr
= 2^{l+m+1} b^{l+m} c^m e^{2m-l}/l$.
\\
(ii) The part for $t \in [r-1, r+1]$.
In this part, we have 
$t-R(r) = t - r \in [-1, 1]$, so $f(t-R(r)) \le ce$.
The metric is 
$$g= dr^2+ (1+e^{r})^2 (dt^2 + b^2e^{2r} d\rho^2 + b^2e^{2r} f(t-R(r))^2 d\tau^2).$$
Since the volume of $be^r f(t-R(r)) T^m$ is at most $b^m e^{mr} c^m e^m$, 
the volume for $(t,\tau), t \in [r-1,r+1], \tau \in T^m$ is at most
$2 b^m c^m e^m e^{mr}$,
so that   the volume of 
$dt^2 + b^2e^{2r} d\rho^2 +b^2 e^{2r} f(t-R(r))^2 d\tau^2$
is at most $2b^{l+m} c^m e^m e^{(l+m)r}$. 
Finally, the volume of this part is, since $1+e^r \le 2$, at most 
\[
2^{l+m+2} b^{l+m} c^m e^m \int_{-\infty}^{-1} e^{(l+m)r}\,dr
= \frac{2^{l+m+2} b^{l+m} c^m e^{-l}}{l+m}.
\]

Combining (i) and (ii), volume  of the subset is at most\\
\[
2^{l+m+1} b^{l+m} c^m e^{-l} \left(\frac{e^{2m}}{l} + \frac{2}{l+m}\right).
\]

(3) 
We fix $r \le 0$. Then $R(r)=r$.
For $t \le r-1$, we have $t- R(r)= t-r \le -1$, 
so that $f(t-R(r))=1$. 
Thus, 
$g=dr^2+ h(r)^2(dt^2 + b^2e^{2r} d\rho^2 + b^2e^{2r} d\tau^2)$.

(4)
Fix $r\ge0$. Note that then $0 \le R(r) \le 3$.
So, if $t \le -1$ then $t-R(r) \le -1$, so that 
$f(t-R(r)) =1$. Substitute them to the definition of $g$.

(5) $R(r) \le 3$. Since $t \ge 4$, we have $t-R(r) \ge 1$, so that 
$f(t-R(r)) = ce^{t-R(r)}$. Substitute this to the definition of $g$.

(6)  If $r \ge 5$, then $R(r)=3$, $h(r)= 2e^{r}$ and $f(t-R(r))=f(t-3)$, so that 
$g=dr^2 + 4e^{2r} (dt^2 + b^2 e^6 d\rho^2 + b^2e^6 f(t-3)^2 d\tau^2)$,
which is a desired warped metric.

(7) follows from $f'' \ge 0$.
This completes the proof.
\end{proof}

\subsection{Proof of Theorem \ref{metric} where the pieces are products}
\proof

By assumption the graph manifold $M$ has a geometrization, ie, 
each piece has a locally product Riemannian metric of type $\Bbb H^2 \times \R$, 
and the gluing maps are isometries.
In the following, we first give an argument assuming that
$M$ has a geometrization with respect to a product metric on each piece. 
Then we will explain that in fact our argument applies
to the locally product case as well.

{\it Step 1}.
Let $P_i$ be the pieces of $M$.
Suppose $P_i = \S_i \times S^1$.
We will put a Riemannian metric on each $\Bbb R \times P_i$ so that 
they match up for gluing along boundary, which defines a Riemannian metric on $\Bbb R \times M$.
First, put a complete, hyperbolic metric of finite volume in the interior
of each $\S_i$. Let $\vol_{hyp}(\S_i)$ denote its volume. 
There is a constant $L >0$,  such that the interior of 
each $\S_i$ contains a compact subset $K_i$ homeomorphic to $\S_i$ such that 
each connected component of $\S_i \backslash K_i$ is 
isometric to an annulus $ (- \infty,0) \times S^1(bce^{2}L)$
with the metric 
$dt^2 + e^{2t} d\tau^2$,
i.e., the warped product $ (- \infty,0) \times_{e^t} S^1(bce^{2}L)$,
where $S^1(a) := \R/a\Z$ is a circle of length $a > 0$.

{\it Step 2}.
For each $r \in \Bbb R$, we consider a Riemannian product
$$K_i \times S^1(be^{R(r)-2}L)$$ then further take a 
``generalized'' warped 
product with $\Bbb R$ 
 as follows:
$$J_i = \Bbb R \times_{h(r)}  (K_i \times S^1(be^{R(r)-2}L)),$$
where at each $r$, the metric of the fiber $K_i \times S^1(be^{R(r)-2}L)$
is rescaled by $h(r)$.
We say this is a generalized warped product since 
the metric on the fiber at $r$ depends on $r$.
Then

\begin{lem}\label{core.estimate}
\begin{enumerate}
\item The subset of $J_i$ for the part $r<0$
has finite volume, which is  bounded above
by
$8beL\vol_{hyp}(\Sigma_i)  $.
\item
For the part $r>5$,  $J_i$ is a warped product:
$$(\,5,+\infty\,) \times_{2e^{r}} (K_i \times S^1(beL))$$
\item
The sectional curvature of $J_i$ is bounded:
$$ C \le K <0, $$
where $C<0$ is the constant from Proposition \ref{estimate}.
\item
Each boundary component of $J_i$  is isometric
to 
$$\Bbb R \times_{h(r)} (S^1(bce^{2}L) \times S^1(be^{R(r)-2}L)).$$
\end{enumerate}
\end{lem}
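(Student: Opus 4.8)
The plan is to verify Lemma~\ref{core.estimate} by reading off each of the four assertions directly from Proposition~\ref{estimate}, which was designed precisely for this purpose. The key observation is that $J_i$ is, by construction, (a piece of) the space $X = \R\times\R\times T^l\times T^m$ with $l=m=1$, where the $\R$-factor corresponding to $t$ has been replaced by the surface core $K_i$: away from the cusp the metric on $\Sigma_i$ is irrelevant for the curvature and volume bounds near the boundary, and near the cusp $\Sigma_i\setminus K_i$ is an honest warped product $(-\infty,0)\times_{e^t}S^1(bce^2L)$, which matches the $(t,\tau)$-part of $g$ for $t\ge 4$ as recorded in Proposition~\ref{estimate}(5) and Remark~(ii) after it, once we rescale the $\tau$-circle to have length $bce^2L$ and translate so that the gluing happens at $t=4$. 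So the whole point is a bookkeeping identification of $J_i$ with the model metric $g$.

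Concretely, I would argue as follows. For (4): a boundary component of $J_i$ comes from a boundary circle of $\Sigma_i$, i.e.\ from the annular end $(-\infty,0)\times_{e^t}S^1(bce^2L)$ of the base; taking the generalized warped product with the fiber $S^1(be^{R(r)-2}L)$ and the $\R_r$-factor, and pushing the end ``to infinity'' in $t$, the boundary torus inherits exactly the metric $\R\times_{h(r)}\bigl(S^1(bce^2L)\times S^1(be^{R(r)-2}L)\bigr)$; this is a direct unwinding of the definition of $J_i$. For (2): when $r\ge 5$ we have $R(r)=3$ and $h(r)=2e^r$ by the defining properties of $R$ and $h$, so the generalized warped product degenerates to the genuine warped product $(5,+\infty)\times_{2e^r}(K_i\times S^1(be^{3-2}L)) = (5,+\infty)\times_{2e^r}(K_i\times S^1(beL))$, the rescaling factor $be^{R(r)-2}=be$ now being constant in $r$. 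For (3): the curvature of $J_i$ at points in the ``core'' region $K_i\times S^1$ is that of the Riemannian metric $dr^2+h(r)^2(g_{K_i}+ \text{(circle)})$ where $g_{K_i}$ is hyperbolic of curvature $-1$; this is exactly the situation covered by the curvature computation behind Proposition~\ref{estimate}(1) (the $dt^2+b^2c^2e^{2t}d\tau^2$ block there \emph{is} a hyperbolic plane, and a hyperbolic surface is locally isometric to it), so the bound $C\le K<0$ with the same $C$ carries over, and in the annular end region it is literally the metric $g$. For (1): the finite-volume bound over $\{r<0\}$ follows by splitting $\Sigma_i = K_i \cup (\text{annular ends})$; the contribution of each annular end is bounded by Proposition~\ref{estimate}(2) after accounting for the factor $be^2L$ rescaling the $\tau$-circle, while the contribution of $K_i$ is $\int_{-\infty}^0 h(r)^3\,dr$ times $\operatorname{vol}_{hyp}(K_i)\cdot(\text{length of }S^1(be^{R(r)-2}L))$, and since $h(r)=1+e^r\le 2$, $R(r)=r$, $e^{R(r)-2}\le e^{-2}$ for $r<0$, a crude estimate $\int_{-\infty}^0 (1+e^r)^3 e^r\,dr \le \int_{-\infty}^0 8 e^r\,dr = 8$ yields the stated bound $8beL\operatorname{vol}_{hyp}(\Sigma_i)$ (with the annular-end contributions absorbed into the same constant, using $\operatorname{vol}_{hyp}(K_i)\le\operatorname{vol}_{hyp}(\Sigma_i)$).

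I expect (1), the explicit volume bound, to be the only place requiring genuine (if elementary) estimation, since (2), (3), (4) are essentially tautological restatements of Proposition~\ref{estimate} and of the defining properties of $R$ and $h$. The mild subtlety in (1) is keeping track of the four powers of $h$ (three from the three ``fiber'' directions $K_i$ has dimension two, plus the $S^1$, no—$\dim K_i = 2$ plus the circle gives a $3$-dimensional fiber, hence $h^3$) and of the two scaling factors $e^{R(r)}$ appearing in the fiber circle, then checking that the resulting integral $\int_{-\infty}^0$ converges with a constant one can bound by $8beL$; this is routine. The main conceptual point, which should be stated explicitly, is the identification of the annular ends of $J_i$ with the model $g$ of Proposition~\ref{estimate}, so that clause (2) of that proposition applies verbatim to the non-compact directions of $\Sigma_i$.
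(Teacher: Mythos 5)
Your proposal follows the paper's proof almost exactly: (2) and (4) by substituting the defining properties of $R$ and $h$ into the definition of $J_i$, (3) by the local isometry between the core metric and the model metric of Proposition \ref{estimate}(5) (remark (ii)), and (1) by direct integration of the fiber volume. Two small corrections are needed. First, $J_i$ is the generalized warped product over the \emph{compact core} $K_i$ only; the annular ends of $\Sigma_i$ are not part of $J_i$ but are modeled by the separate manifold $A$ glued on in Step 3, so there is no ``annular end'' contribution in (1) and Proposition \ref{estimate}(2) is not invoked here. Second, $h(r)=1+e^r$ (hence $h\le 2$) holds only for $r\le -1$; on $[-1,0]$ one uses monotonicity, $h(r)\le h(1)=2e$, and then the fiber volume $h(r)^3\vol_{hyp}(K_i)\,b\,e^{r-2}L$ integrates over $(-\infty,0]$ to at most $(2e)^3e^{-2}bL\vol_{hyp}(\Sigma_i)=8ebL\vol_{hyp}(\Sigma_i)$ --- this is exactly where the factor $e$ in the stated bound comes from, which your estimate would otherwise miss.
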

\proof
(1) At each $r<0$, $R(r)=r$, hence the volume of $K_i \times S^1(be^{R(r)-2}L)$
is $\le \vol_{hyp}(\S_i) \cdot be^{r-2} L $. 
Since $h(r) \le h(1) \le 2e$ for $r\le 0$, 
the volume of $J_i$ for the part $r \le 0$
is 
$$\le (2e)^{3} \vol_{hyp}(\Sigma_i)   L b\int _{-\infty}^0 e^{r-2} dr
= 8ebL \vol_{hyp}(\Sigma_i) .$$

(2) Suppose $r>5$.
 Then $R(r)=3, h(r)=2e^{r}$. Substitute them to the definition 
 of the metric on $J_i$.

(3) The metric of $J_i$ is written as
$$g=dr^2 +h(r)^2 (d_{hyp} + e^{2R(r)} d\rho^2),$$
where $\rho$ is for $S^1(bLe^{-2})$.
Now this metric and the metric that appears
in Proposition \ref{estimate} (5)
are locally isometric to each other (see (ii) of the remark there), but that metric
satisfies $C \le K <0$ for the constant $C$
in the proposition. 

(4). This is because each boundary of $K_i$ is isometric to $S^1(bce^{2}L)$.
\qed

{\it Step 3}.
We set $l=m=1$ in Proposition \ref{estimate}.
We prepare a manifold with boundary 
$$A=\{(r,t,\rho,\tau)| r \in \Bbb R, t \in \left[R(r)-2,4 \right], \rho \in S^1(Le^{-2}),
\tau \in S^1(Le^{-2})\}$$ with the metric $g$ given in (\ref{eq:metric}):
\[
g =dr^2 + h(r)^2 ( dt^2 + b^2e^{2R(r)} \, d\rho^2 + b^2e^{2R(r)} f(t-R(r))^2 \, d\tau^2 ).
\]

The manifold $A$ has two boundary components,
$\partial_0 A, \partial_1 A$, where
$\partial_1 A$ is the component at $t=4$ and $\partial _0 A$ at $t=R(r)-2$.
For $t=4$, we have $f(t-R(r))=f(4-R(r))=ce^{4-R(r)}$, 
so that 
$\partial _1 A$ is isometric to
\begin{equation}\label{boundary1A}
\Bbb R \times_{h(r)}  (S^1(be^{R(r)-2}L) \times S^1(bce^{2}L)).
\end{equation}
Hence  $\partial_1A$ is isometric to each boundary component of every $J_i$
by Lemma \ref{core.estimate} (4), 
so that 
we are able to glue $A$ to the boundary component of $J_i$ 
along $\partial _1 A$.
By Proposition \ref{estimate} (5) (see also the remark (ii) after that),  no singularity of the metric occurs by this gluing. 
In this way we obtain a Riemannian manifold diffeomorphic to $P_i$ (or a Riemmanian metric on $P_i$),
such that 
\begin{itemize}
\item
$P_i $ is diffeomorphic to $\Bbb R \times (\S_i \times S^1)$, where
the first parameter is $r$. 
\item
every connected component of the boundary  of $P_i$
is isometric to 
\begin{equation}\label{boundary0A}
\partial _0 A = \Bbb R \times _{h(r)}  (S^1(be^{R(r)-2} L) \times S^1(be^{R(r)-2} L)),
\end{equation}
and moreover the $1$-neighborhood of $\partial _0 A$ is 
isometric to the direct Riemannian 
product $\partial _0 A \times [0,1]$
since $f(t-R(r))=1$  for $t \in [R(r)-2, R(r)-1]$.

\item
 volume of the subset $P_i$ for the part  $r \le -1 $
is finite (since by Proposition \ref{estimate} for the part isometric to $A$ and for $J_i$ it is by Lemma \ref{core.estimate} (1)).
\item
$C \le K <0$ on $P_i$ (for $A$ by Proposition \ref{estimate}, and for 
$J_i$ by Lemma \ref{core.estimate} (3))
\item
 the metric on $P_i$ is a warped product
w.r.t. the function $2e^r$ 
for $r>5$, (for $A$ by Proposition \ref{estimate} (6), for $J_i$ 
by  Lemma \ref{core.estimate} (2))
\end{itemize}

{\it Step 4}.
Now our metric on $\R \times P_i$ will give a Riemannian metric 
on $\R \times M$. Indeed, by the second bullet in the above, 
the two boundary circles have the same length at each $r$,  
so that  we can  glue the $\R \times P_i$'s
by the given gluing maps at each $r$.

We finish the proof by checking this metric satisfies all the properties
in Theorem \ref{thm.ontaneda}. By the third bullet, 
volume of the part $(-\infty, -1] \times M$  is finite since there are only 
finitely many pieces for $M$, which implies that the volume 
for $(-\infty, 0] \times M$ is finite since $M$ is compact. 
The sectional curvature $K$ satisfies $C \le K<0$
on $\Bbb R \times M$ by the fourth bullet. The metric is a warped
product for $r\ge 5$ w.r.t. the function $2e^{r}$ and some 
metric $g_M$ on $M$ by the last bullet
and Proposition \ref{estimate} (6).
Now we  rescale the metric $g_M$ to $(1/4)g_M$, 
which 
we still denote by $g_M$,  then 
the warping function becomes $e^r$. Then we 
have $g=dr^2 + e^{2r} g_M$ for  $r\ge 5$. Set $D=5$.
Finally since $\dim M=3$, we are done. 

The proof of Theorem \ref{metric} is complete
in the case without Seifert fibered spaces, provided that 
$M$
has a geometrization with respect to a product metric 
on every piece.

{\em Locally product case.}
Now, suppose some pieces are only locally Riemmanian
product. We handle this case by following the product case, and 
we only explain the changes we need to make.
Let $P_i=\Sigma_i \times S^1$ (the trivial bundle) be a piece
which is a locally Riemannian product with respect to which $M$ has a geometrization.
Let 
$$\theta_i:\pi_1(\Sigma_i) \to S^1$$
be the monodromy representation defined by the Riemannian metric on $P_i$.

No change is necessary in Step 1.
In Step 2, instead of the Riemannian product $K_i\times S^1(be^{R(r)-2}L)$, 
we take the locally Riemannian product with respect to $\theta_i$,
which we denote by
$$K_i \times_{\theta_i}  S^1(be^{R(r)-2}L).$$
Accordingly we also use $K_i \times_{\theta_i}  S^1(be^{R(r)-2}L)$
in the statement of Lemma \ref{core.estimate}, but the proof is nearly same:
for example in the proof of (3), 
$g=dr^2 +h(r)^2 (d_{hyp} + e^{2R(r)} d\rho^2)$ does not hold any more, but 
$g$ is only locally isometric to the right hand side. But this is enough
since the sectional curvature depends only locally on $g$.

In Step 3, when we define the manifold $A$, we use the same definition, but 
the metric on $A$ is a locally  product metric
with respect to the monodromy $\theta_i$ on the fiber circle for $\rho$.
We call this circle $\rho$-circle in the following.
Accordingly, in the description (\ref{boundary1A}), $\partial_1A$ becomes only a locally Riemannian product
with respect to $\theta_i$ on the $\rho$-circle (which is the first $S^1$
acted by the second $S^1$ via $\rho$).
This also happens in the metric description of $\partial _0A$
in (\ref{boundary0A}).

Finally in Step 4, the two circles in (\ref{boundary0A})
has same length in this case, and we kept using the same
monodromy $\theta_i$ on each piece $P_i$, 
therefore, the given gluing maps are all isometric.
This finishes the proof in this case, and 
the proof of Theorem \ref{metric}
for flip manifolds without Seifert space pieces is complete. 
\qed

\subsection{Proof of Theorem \ref{metric} for the general case}
We now handle a graph manifolds such that possibly some pieces are Seifert fibered spaces or fibers are non-orientable
(from now on we consider a Seifert fibered space contains
the latter case).
The argument is  identical to  the previous case
with non-trivial monodromy representation $\theta_i$
of $\pi_1(\Sigma_i)$ where $\Sigma_i$ is the base surface of 
a piece $P_i$. 
 The only difference is that $\Sigma_i$ is maybe an orbifold and 
 $\pi_1(\Sigma_i)$ is the orbifold fundamental group. 
 In the following we only explain that part. 
 A good reference for the geometry of Seifert fibered spaces
 is \cite{scott}. 

\proof 
Let $P$ be a piece in $M$. Suppose $P$ is  a Seifert fibered space,
otherwise we do not have to change anything. We remember that 
when $P$ is a trivial circle bundle over a surface,
we can choose the length of the fiber circle when we put a locally product
Riemannian metric. 

Let $\Sigma$ be the base orbifold of $P$.
Let $x_1, \cdots, x_n$ be the singular points of $\Sigma$ such that 
the twist parameter at $x_i$ is $q_i/p_i$.
Since $P$ has non-empty boundary, $P$ admits the geometry of  
$\Bbb H^2 \times \Bbb R$ (\cite[Theorem 5.3(ii)]{scott}).
We explain this part in some details (cf. \cite[Proof of Theorem 5.3(ii)]{scott}, \cite[Lemma 2.5]{L}). 
We put $\Sigma$ a complete hyperbolic orbifold metric of finite volume, then view
$P$ as an $S^1$-bundle over the orbifold $\Sigma$ with
a Riemannian metric that is 
locally isometric to $\Bbb H^2 \times \Bbb R$.
The global geometry is described by the monodromy representation 
of $\pi_1(\Sigma)$ into the group $S^1$ if the fibers
are oriented, otherwise into $S^1 \rtimes \Bbb Z_2$,
the isometry group of a circle. Here the fundamental group is in the orbifold sense, and $\Z_2$ means
$\Z/2\Z$.

First, assume $\Sigma$ is orientable. 
Let $X_i$ denote a loop around the singular point $x_i$, and 
$b_1, \cdots, b_n$ the curves around the punctures (boundary components) of $\Sigma$.
Let $g$ be the genus of $\Sigma$ then take loops
$\alpha_1, \beta_1; \cdots; \alpha_g, \beta_g$ associated to the genus such that 
$X_i, b_i, \alpha_i, \beta_i$ generate the fundamental group of $\Sigma$
satisfying a well-known relation (after choosing orientations of the loops
suitably) :
$$\prod_i[\alpha_i, \beta_i] \prod_i X_i \prod_i b_i = 1.$$

Let $\theta(\alpha_i), \theta(\beta_i), \theta(X_i), \theta(b_i)$ denote the monodromy along those loops 
for the $S^1$-fiber.
We set for each $i$
$$\theta(X_i)=2\pi q_i/p_i.$$ 
We choose $\theta(b_i),\theta(\alpha_i), \theta(\beta_i)$ for each $i$ such that in $S^1 \rtimes \Z_2$, 
$$
\theta(\prod_i[\alpha_i, \beta_i] \prod_i X_i \prod_i b_i)=1.
$$
Then  there is a locally product Riemannian 
metric on $P$ whose monodromy representation is $\theta$.
Note that if $ \theta(\alpha_i), \theta(\beta_i) \in S^1$ 
then since $S^1$ is abelian we always have
$\theta(\prod_i[\alpha_i, \beta_i])=0$
in $S^1$.

Conversely, the monodromy representation 
induced by a locally product Riemannian metric
is obtained in the above way. 

If $\Sigma$ is not oriented, the relation in the fundamental 
group is slightly different, but the rest is same and we omit
repeating it. 

Note that when we put a Riemannian metric on $P$, as before we 
can choose the length of the $S^1$-fiber (at a regular point) as we want. 
Also each boundary component of $P$ is a flat torus/Klein bottle. 

We take a compact subset $K$ homeomorphic to $\Sigma$
such that all singular points are contained in $K$, and that 
each connected component of $\Sigma \backslash K$
satisfies the same metric property as the non-generalized case
described in Step 2 in the previous section. 
We do not need to alter the argument since we modify the 
metric only outside of $K$, then that $\Sigma$ is an orbifold
does not cause any difference.

Now we proceed in the same way as the previous case, and complete the proof of Theorem \ref{metric} in general. 
\qed

We give an example of a flip manifold with a geometrization made from a Seifert fibered space.
\begin{ex}[Seifert fibered space as a piece]\label{flat.seifert}
We give an example of a Seifert fibered space  that can appear as a piece in 
a flip manifold with a geometrization. 
Let $\Gamma$ be a three-punctured sphere. There is an obvious
action of $G=\Bbb Z / 3\Bbb Z$ rotating the three punctures with
a generator $\rho$.
Put a complete hyperbolic metric on $\Gamma$ which is 
$\rho$-invariant.
Now set $\Sigma=\Gamma/G$, which is a hyperbolic orbifold
with two singular points, $p_1, p_2$, and 
with one puncture. 
Take the product $\Gamma \times S^1$ and let $G$ act on it
such that $\rho$ acts on $S^1$ by the rotation of $2\pi/3$.
This is a free action and the quotient $(\Gamma \times S^1)/G$
is a three dimensional manifold $P$, which is a Seifert fibered space
over $\Sigma$ such that the twists at $p_1, p_2$
are $1/3, 2/3$, respectively. 
(One can say that the twist at $p_2$ is $-1/3$).
$P$ has only one boundary component, which is a Riemannain
product of the fiber circle and  a loop around the puncture of $\Sigma$
since the monodromy is trivial.
Now, for example,  we prepare another copy of this, then glue the two 
along their boundary by a trivial or flip map, and obtain a 
flip manifold which admits a geometrization. 
\end{ex}

We also record an example of a graph manifold $M$ with a geometrization whose gluing map
is not a trivial nor a flip map, cf, \cite[Example 1.5]{BSc}.
\begin{ex}[Graph manifold of non-positive curvature]\label{example.non-flip}
Consider the parallelogram of side length 1 with the angles of the corners
equal to $\pi/3, 2\pi/3, \pi/3, 2\pi/3$. Choose a vertex of angle $\pi/3$
and call it $O$, then call the adjacent vertices $A, B$. The last vertex
is called $D$. We obtain a flat torus $T$ gluing the sides $OA$ and $BD$; and $OB$ and $AD$. 
We regard $T$ as a circle bundle
over a circle where the base circle is $OB$ and the fiber circle is $OA$.
The monodromy with respect to the flat metric is $\pi$.

 $T$ has 
an interesting isometry $\phi$ that is defined by mapping:
$$OA \mapsto BA, OB \mapsto OA.$$
Notice  $\phi$ is not homotopic to the trivial map nor
the flip map of $T$.

We define a graph manifold using $\phi$. 
Let $\Sigma$ be a compact orientable surface of genus one with 
two boundary components, $a+, a-$. Orient those two curves
using the orientation of $\Sigma$. Let $P$ be a trivial circle bundle over $\Sigma$
and we put a locally product metric of type $\Bbb H^2 \times \R$
on $P$ such that the monodromy satisfies $\theta(a+)=\pi, \theta(a-)=\pi$.
We arrange that there is a small constant $c>0$ such that
the two boundary tori $T+, T-$ of $P$ at $a+,a-$, respectively, are isometric to $T$
with the metric rescaled by $c$.
Now we glue $T+$ to $T-$ by $\phi$, which is an isometry. $\phi$ is 
not a flip nor trivial map. 
In this way we obtain an oriented  graph manifold $M$ that has a Riemannian metric of 
non-positive curvature. 
\end{ex}

\begin{rem}\label{rem.buyalo}
As we said the property that a graph manifold $M$ has a geometrization formulated
differently in \cite{BK}. Although we put a complete, finite volume 
hyperbolic metric on the base surface/orbifold of a piece, they put
a hyperbolic metric with a geodesic boundary (i.e., if you lift it to 
the universal cover, then it is a geodesic in $\Bbb H^2$).
In both settings we can see the piece as a circle bundle 
over the base, and it defines a monodromy representation of the fundamental 
group of the base into $S^1$,
which coincides for the two settings. 
So, if $M$ admits isometric geometrization, then its monodromy representation
can be used to put a locally product Riemannian 
metric on each piece that gives a geometrization on $M$ in our sense.

\end{rem}

\subsection{Proof of Theorem \ref{high.dimension}}
\begin{proof}
The proof of Theorem \ref{high.dimension}
is nearly identical to the proof of the version of Theorem \ref{metric}
where  each piece is a product of a surface and a circle, which 
is exactly the case where $l=m=1$ in Theorem \ref{high.dimension}.
The main body of the argument for Theorem \ref{metric}
is Proposition \ref{estimate}, which is already shown for general $l, m$.
So we do not repeat the argument, except we make one remark.
Suppose $W_1=\Sigma_1 \times T_1^m$, $W_2=\Sigma_2 \times T_2^m$
are pieces such that $S_1^l \times T_1^m$ and $S_2^l \times T_2^m$
are glued by an isometry, where $S_i^l$ is a boundary torus of $\Sigma_i$.
Also, suppose $\Sigma_i \subset X_i$.
By taking $\Sigma_i$ larger in $X_i$  if necessary, one may assume
the metric on $S_i$ is rescaled 
by any constant $0<c<1$. Also one can rescale the fibers $T_i^m$
by the same constant $c$, which leaves the gluing isometric. 

It follows from Proposition \ref{estimate}(7) that
$M$ carries a metric of non-positive curvature,
so that $\Wh(M)$ vanishes.
This completes the proof.
\end{proof}

\section{The other family}

We discuss the other examples of manifolds
that will be ends. 
\subsection{Construction}

Let $M_1, M_2$ be $n$-dimensional closed, orientable hyperbolic manifolds with totally geodesic, orientable submanifold $N_1, N_2$, respectively, of codimension two. 
Assume that $N_1$ and $N_2$ are isometric
by an isometry $b:N_1 \to N_2$.

The unit normal bundle of $N_1$ in $M_1$
 is an $S^1$-bundle, $(X_1,N_1,S^1)$, 
 with oriented fibers, 
  which we also denote by 
 $$X_1=N_1 \ltimes S_1, \, 
 \mathrm{or} \, \, X_1 =S_1 \rtimes N_1.$$
 We will use this notation for bundles in 
 this paper, which does not mean a semi-direct product of group structures. 
 
  The metric of $M_1$
induces a Riemannian metric on this bundle which 
is locally a Riemannian product of the hyperbolic metric on $N_1$
and $S^1$.
Similarly we have an $S^1$-bundle
over $N_2$, $(X_2,N_2,S^1)$,
which is locally a Riemannian product. 

For a sufficiently small constant $\epsilon>0$, the boundary of $V_1=M_1 \backslash N_\epsilon(N)$
is canonically identified with $(X_1,N_1,S^1)$.
Also, $V_2 = M_2 \backslash N_\epsilon(N)$
is identified with $(X_2,N_2,S^1)$.

Suppose $S^1$-bundles over $M_1,M_2$
 with 
Riemannian metrics which are locally product of $M_1, M_2$, respectively,
and $S^1$ are given.
We denote them by $(Y_1,M_1,S^1), (Y_2,M_2,S^1)$, and the restriction of them to $N_1, N_2$
by $(Y_1|N_1, N_1, S^1), (Y_2|N_2, N_2, S^1)$.

We assume $(X_1,N_1,S^1)$ is isometric to $(Y_2|N_2, N_2,S^1)$ by a bundle map $(f_1,b)$
where $b$ is the isometry between $N_1$ and $N_2$, and also $(Y_1|N_1,N_1,S^1)$
 is isometric to $(X_2,N_2,S^1)$
by a bundle map $(f_2,b)$ 
in the same manner. 
It then follows that the fiber prduct
$(X_1 \times Y_1|N_1, N_1, S^1\times S^1)$
is isometric to the fiber product $(X_2 \times Y_2|N_2, N_2, S^1 \times S^1)$
by the {\it flip map} 
$$\phi: (n,(s_1, s_2)) \mapsto (b(n),(f_2(s_2),f_1(s_1))),
\quad n \in N_1, (s_1,s_2) \in S^1\times S^1,$$
or the trivial map
$$\phi: (n,(s_1, s_2)) \mapsto (b(n),(f_1(s_1),f_2(s_2))),
\quad n \in N_1, (s_1,s_2) \in S^1\times S^1.$$
Note that the metric on the fiber $S^1 \times S^1$
of the two bundles is a product metric since  $X_i$ are defined over $M_i$.

The fiber products $(X_1 \times Y_1|N_1, N_1, S^1\times S^1)$ and 
$(X_2 \times Y_2|N_2, N_2, S^1 \times S^1)$
are identified with 
the the boundary of $Y_1|V_1$
and $Y_2|V_2$.

Now we define 
$$W= (Y_1|V_1, V_1, S^1) \cup_{\phi} (Y_2|V_2,V_2,S^1)$$
by identifying their boundaries 
$(X_1\times Y_1|N_1,N_1,S^1), (X_2 \times Y_2|N_2, N_2,S^1)$
using $\phi$.

For example, if $n=2$ then $N_1,N_2$ are points
and $W$ is a flip manifold. 

We recall the theorem from the introduction. 
\begin{thm}[Theorem \ref{thm3}]\label{thm2}
Assume $W$ has a geometrization (i.e., the gluing maps are isometric).
Then 
$W$ carries a metric of non-positive curvature,
so that $\Wh(W)$ vanishes. 
Also, $\R \times W $ carries 
 a complete Riemannian metric
that is an eventually warped cusp metric with $C \le K <0$ for
some constant $C<0$.
\end{thm}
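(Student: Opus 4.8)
The plan is to mimic the proof of Theorem \ref{metric} in the product-piece case, recognizing that the essential content is already packaged in Proposition \ref{estimate} with general $l,m$, and that the only new ingredient is bookkeeping about how the various $S^1$ and torus factors assemble along the gluing locus $N_1\cong N_2$. Concretely, I would fix a complete finite-volume hyperbolic metric on the interior of $V_i=M_i\setminus N_\epsilon(N_i)$, truncate the cusp that surrounds $N_i$ so that its cross-section is flat, and identify a compact core $K_i\subset V_i$ whose complement in $V_i$ is a warped product $(-\infty,0)\times_{e^t}(\text{flat torus bundle over }N_i)$. Here is the point where the dimension count appears: the boundary $\partial P_i$, where $P_i=Y_i|V_i$, is a two-torus bundle over $N_i$, with one circle direction coming from the normal direction to $N_i$ in $M_i$ (the ``$t$-circle''/meridian) and the other from the $S^1$-fiber of $Y_i$. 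So the model metric from Proposition \ref{estimate} with $l=\dim N_i$ directions carrying the hyperbolic base of $N_i$ — no wait: one should instead treat the flat $N_i$-directions as a rescaled-by-$h(r)$ factor sitting inside the compact core (as in $J_i$ in Step 2 of the proof of Theorem \ref{metric}), and use Proposition \ref{estimate} with $l=m=1$ for the two circle directions $\rho,\tau$ near $\partial P_i$. I would build, for each $r\in\mathbb R$, a generalized warped product $J_i=\mathbb R\times_{h(r)}(K_i\times_{\text{monodromy}} S^1(\cdots))$ exactly as in Lemma \ref{core.estimate}, obtain finite volume for $r<0$, the warped-product form for $r>5$, curvature bound $C\le K<0$, and a collar of the boundary isometric to the model $\partial_1 A$ of Proposition \ref{estimate}; then glue in the collar piece $A$ to produce a metric on $\mathbb R\times P_i$.

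Next I would glue the $\mathbb R\times P_i$ along their boundaries. The boundary of $\mathbb R\times P_i$, at each $r$, is $\mathbb R\times_{h(r)}$ (a torus bundle over $N_i$ with both circle factors of the same length $be^{R(r)-2}L$ and product metric on the $S^1\times S^1$ fiber, since the $X_i$-circle comes from $M_i$). The gluing condition (geometrization) says exactly that the bundle map $\phi$ — trivial or flip on the $S^1\times S^1$ factor, base map $b$ — is an isometry; since the two circle factors have equal length at each $r$ and the monodromy representations along the $N_i$ directions were kept fixed and matched via $(f_1,b),(f_2,b)$, the flip or trivial map remains an isometry after the warping. Hence the pieces glue up along $\mathbb R\times\partial$ to give a complete metric on $\mathbb R\times W$. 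As in Theorem \ref{metric}: finite volume on $(-\infty,0]\times W$ follows since there are two pieces and $W$ is compact; $C\le K<0$ holds piecewise hence globally (smoothing near the gluing locus as in the graph-manifold case, keeping $C\le K<0$); and for $r\ge 5$ the metric is $dr^2+4e^{2r}g_W$ for a fixed metric $g_W$ on $W$, which after rescaling $g_W\mapsto\frac14 g_W$ becomes $dr^2+e^{2r}g_W$, i.e.\ an eventually warped cusp metric. For the non-positive curvature of $W$ itself and the vanishing of $\Wh(W)$: the fixed metric $g_W=\hat g$ has non-positive curvature by Proposition \ref{estimate}(7), and then $\Wh(W)=0$ by the standard result for closed non-positively curved manifolds (cf.\ the use in Theorem \ref{high.dimension}).

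The main obstacle I expect is verifying that the gluing remains isometric after the warping is introduced and after smoothing — i.e.\ checking that the two torus bundles over $N_i$ that are to be identified have, at every level $r$, matching monodromy data and matching circle lengths, so that $\phi$ is genuinely an isometry of the warped boundary collars, not merely a diffeomorphism. This is where the hypothesis that the $X_i$-fibers come from bundles $Y_i$ \emph{over all of} $M_i$ (so the $S^1\times S^1$ fiber metric is an honest product, with no monodromy in the meridian direction) is used, together with the compatibility bundle maps $(f_1,b)$ and $(f_2,b)$; once this is in place the rest is the same routine as in the graph-manifold proof. A secondary point requiring a little care is choosing the compact core $K_i$ and the truncation so that the collar of $\partial V_i$ in $V_i$ is exactly the hyperbolic warped-product cusp model, uniformly, and so that the singular (Seifert-type) behavior — here the monodromy of $Y_i|N_i$ — is confined to $K_i$; this is the analogue of Step 1 plus the general-case discussion, and presents no essential new difficulty.
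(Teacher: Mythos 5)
There is a genuine gap, and it sits exactly at the point you flag as ``the point where the dimension count appears.'' You model the end of $V_i=M_i\setminus N_\epsilon(N_i)$ as a cusp: a warped product $(-\infty,0)\times_{e^t}(\text{flat torus bundle over }N_i)$, so that Proposition \ref{estimate} (with $l=m=1$ and the $N_i$-directions hidden inside the compact core) supplies the model metric near $\partial P_i$. But the end of $V_i$ is not a cusp; it is a tube around the totally geodesic codimension-two submanifold $N_i$, and in Fermi coordinates the hyperbolic metric of $M_i$ there is
\[
dt^2+\sinh^2(t)\,d\tau^2+\cosh^2(t)\,g_{N_i},
\]
so the meridian circle is $\sinh$-warped, the $N_i$-directions are $\cosh$-warped, and for $n\ge 3$ the cross-sections are non-flat $S^1$-bundles over the \emph{closed hyperbolic} manifold $N_i$ (in particular $N_i$ is hyperbolic, not flat, contrary to your phrase ``the flat $N_i$-directions''). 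No truncation or re-hyperbolization of $V_i$ produces the cusp model you describe, and Proposition \ref{estimate}(5) --- whose metric in the $(t,\tau)$-directions is $dt^2+b^2c^2e^{2t}\,d\tau^2$ --- cannot be matched isometrically along a collar to this tube metric.

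This is precisely why the paper introduces a new model, Proposition \ref{estimate.2}: the warping function $F(r,t)$ of the graph-manifold construction is replaced by $\tilde F(r,t)$, which interpolates between $F$ (for $t\le 5$) and $\sinh(t-5)$ (for $t\ge a$), and a new factor $T(t)^2\bigl(dw^2+\sum_j e^{2w}dw_j^2\bigr)$, with $T$ interpolating between $1$ and $\cosh(t-5)$, carries the $N$-directions. The pinched negativity of the curvature of this interpolated metric (Lemmas \ref{lem:key2}, \ref{lem:K_ij-2}, \ref{curvature2}) is the real content of the proof and is not a corollary of Proposition \ref{estimate}; likewise the non-positive curvature of $W$ itself comes from Proposition \ref{estimate.2}(7), not Proposition \ref{estimate}(7), since the limiting metric $\hat g$ must contain the $\sinh$/$\cosh$ tube geometry. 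Your bookkeeping of $\partial P_i$ as a $T^2$-bundle over $N_i$ with product fiber metric, and of the isometric gluing condition, is correct, and the assembly of the pieces (the analogues of Lemmas \ref{core.estimate} and \ref{core.estimate.2}) would then proceed as you say; but without the tube model and its curvature estimates the argument does not go through.
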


\begin{rem}
As in the construction of  3-dimensional graph manifolds, as a generalization 
of the theorem, one can use
a finite collection of codimension 2
submanifolds $N_1, \cdots, N_l$, each of which appears
two times in the union of $n$-dimensional closed hyperbolic 
manifolds $M_1, \cdots, M_k$ as totally geodesic, mutually disjoint,
submanifolds. 
For a sufficiently small $\epsilon>0$
we remove the $\epsilon$-neighborhoods of $N_i$'s, then 
glue the two boundaries of $N_\epsilon(N_i)$ by
either the trivial map or the flip map.
In this way we obtain a closed manifold $W$ for which 
Theorem \ref{thm2} holds. 

Note that Theorem \ref{metric} follows from 
the generalized version of Theorem \ref{thm2}
if all of the base surfaces have genus at least two.
\end{rem}

\subsection{Gluing condition}

We discuss the condition for a flip map to be isometric
in  the case $n=3$ in some details.
The $N_i$ are simple closed geodesics, and we denote them
by $\gamma_i$. By our assumption they have same length.
Let $m_i$ be the meridean curve for $\gamma_i$ in $M_i$.
We denote $\sigma_i$ the fiber circle of $X_i$.
With respect to the Riemannian metric, we can measure 
the monodromy (i.e., rotation) along the curve $\gamma_i$ for $\sigma_i$
and $m_i$, respectively. We denote them by 
$0 \le \theta(\sigma_i), \theta(m_i) <2\pi$.

Notice that the flip map $\phi$ is an isometry
if and only if 
\begin{equation}\label{mono.1}
\theta(m_1)=\theta(\sigma_2), \qquad \theta(\sigma_1)=\theta(m_2)
\end{equation}

In general, i.e., if $\dim N \ge 1$, then 
let $\rho_N(m_1)$ be the monodromy 
representation of $\pi_1(N)$ to $S^1$, 
in terms of the meridean curve $m_1$.
Let $\rho_N(\sigma_1)$ be the monodromy 
representation in terms of $\sigma_1$.
Similarly we define 
$\rho_N(m_2), \rho_N(\sigma_2)$.
We then assume 
\begin{equation}\label{mono.1.general}
\rho_N(m_1) =  b^* \rho_N(\sigma_2), \qquad
\rho_N(\sigma_1) = b^*  \rho_N(m_2)
\end{equation}

It is an interesting question if the bundles $X_i$ satisfying 
this property exists for given $(N_i,M_i)$.
One sufficient condition is that
$H_1(N_i, \Z)$ injects into $H_1(M_i, \Z)$
for both $i=1,2$. Indeed, if so then 
first define a circle bundle
over $N_2$ using $\rho_N(m_1) =  b^* \rho_N(\sigma_2)$ (here, we use that $S^1$ is abelian), then extend it to $M_2$ (use that 
$H_1$ injects), which will be $X_2$.
Similarly we can define $X_1$.

We realize that it is enough if $X_i$ are defined over $V_i$
for our construction.
But in this case we need an additional condition since 
the metric on 
the fiber $S^1\times S^1$ is flat, but not a Riemannian product
any more. 
Hence the monodromy  $\theta_{m_i}(\sigma_i), \theta_{\sigma_i}(m_i)$
are not trivial in general, and we need
\begin{equation}\label{mono.2}
\theta_{m_1}(\sigma_1) =\theta_{\sigma_2}(m_2), \qquad
\theta_{\sigma_1}(m_1) =\theta_{m_2}(\sigma_2)
\end{equation}
It turns out that if one is satisfied then the other one follows.
We will assume this condition if we consider 
bundles that are defined only on $V_i$.

\begin{ex}
We discuss the case that $\dim M=2, \dim N=1$.
If $X$ is defined over $M$, then the boundary
of $V$ is a torus which is a Riemannian product.
But if $X$ is defined only on $M \backslash N_\epsilon(N)$, then maybe $\theta_m(\sigma)\not=0$, 
and the boundary of $V$ is a flat torus, but 
not a product. Then we need to arrange
that $\theta_m(\sigma)$ coinsides
for a pair of tori which are identified.

\end{ex}

\subsection{Outline of proof of Theorem \ref{thm2}}
The proof of Theorem \ref{thm2} is parallel to Theorem \ref{metric}.

We denote $Y_i|V_i$ by $P_i$ and call it a {\em piece}.
$N_i$ are isometric to each other by the isometry $b$, 
so we may write them as $N$.

We will put a metric
on $J_i=\R\times P_i $ so that they match up for gluing
by ${\rm id} \times \phi$, 
which gives a desired metric on $\R \times W$
to apply Theorem \ref{thm.ontaneda}.
$J_i$ has a product metric using the (non-complete) hyperbolic metric on $V_i$, but there will be 
singularity when we glue them.
So we deform the original metric near $\partial J_i$. 
A small neighborhood of $\partial J_i$ 
is diffeomorphic to $\R \times [0,\infty) \times ((S^1 \times S^1) \rtimes N)$. In view of that we will construct a complete Riemannian metric $g$ of negative curvature on
$$\R \times \R \times S^1 \times S^1 \times N,$$
which is invariant by a rotation on each $S^1$.
We arrange that there is a constant $a$ such that
for every $r \in \R$ the metric on 
$\{r\} \times [a,\infty) \times S^1 \times S^1 \times N$,
is identical to the original product metric on $P_i$
upto scaling by a constant depending on $r$
(see Proposition \ref{estimate.2} (5)).
Here, the identification of the metric is canonically done
between the fiber bundle  $(S^1 \times S^1) \rtimes N$
and $S^1 \times S^1 \times N$ since  the metric on the 
product is invariant 
by rotations on the both $S^1$-factors.

Moreover, the metric $g$ will be defined on 
$\R \times \R \times S^1 \times S^1 \times \R^{n-2}$.
The factor $\R^{n-2}$ is identified with $\tilde N$ and $g$ is 
invariant by the action by $\pi_1(N)$ which acts trivially on the other 
factors. 
In this way, 
$(\R \times S^1 \times \R^{n-2})/\pi_1(N)$
is identified with  $N_\epsilon(N) \backslash N$.
The other $S^1$ is for the fiber circle in $P_i$, and 
we can regard $g$ as a metric on $J_i=\R \times P_i$.

We show the following (cf.~Proposition \ref{estimate}).
Recall that $S^1(a)$ is a circle of length $a$.
\begin{prop}\label{estimate.2}
Let $c_1,c_2>0$ be constants. Then 
there is a Riemannian metric $g$ on 
$\Bbb R \times \Bbb R \times S^1(c_1) \times S^1(c_2) \times N$ that is invariant by rotations on each $S^1$ 
satisfying the following {\rm(1)--(7)}.
\begin{enumerate}
\item There is an absolute constant $C <0$,
which does not depend on $c_1,c_2$, such that $C \le K <0$ on 
$\Bbb R \times \Bbb R \times S^1(c_1) \times S^1(c_2) \times N$.
\item Volume of the following subset is finite:
 $$\{\,(r,t,\rho,\tau, n) \mid r \in (-\infty,-1],\ t \in [r-1,2],\ \rho \in S^1(c_1),
\ \tau \in S^1(c_2), \, n \in N\}.$$
\item For $r \le 0$ and  $t \le r-1$, 
\begin{align*}
g = dr^2 + h(r)^2 \biggl( dt^2 + b^2\,e^{2r} \, d\rho^2 +  b^2 e^{2r} \, d\tau^2 
 +  dw^2 + \sum_{j=1}^{n-3} e^{2w} dw_j^2 \biggr).
\end{align*}
\item For  $r\ge0$ and $t \le -1$, 
\[
g = dr^2 + h(r)^2 \biggl( dt^2 + b^2\,e^{2R(r)} \, d\rho^2 +  b^2\,e^{2R(r)} \, d\tau^2
+ dw^2 + \sum_{j=1}^{n-3} e^{2w} dw_j^2 \biggr).
\]
\item For  $r \in \Bbb R$ and $t \ge a$,
\begin{align*}
g &= dr^2 + h(r)^2 \biggl( dt^2 + b^2\,e^{2R(r)} \, d\rho^2 +  \sinh^2(t-5) \, d\tau^2 \\
&\qquad\qquad + \cosh^2(t-5) \Bigl( dw^2 + \sum_{j=1}^{n-3} e^{2w} dw_j^2 \Bigr)\biggr).
\end{align*}
\item 
For $r \ge 5$, the metric $g$ is a warped metric of the form: 
\[
g = dr^2 + 4e^{2r} \hat{g},
\]
where $\hat{g}$ is the metric on $\R \times S^1(c_1) \times S^1(c_2) \times N$
defined by
\[
\hat{g} := dt^2 + b^2\,e^6 \, d\rho^2 +  \tilde{F}(r,t)^2 \, d\tau^2 
+ T(t)^2 \Bigl( dw^2 + \sum_{j=1}^{n-3} e^{2w} dw_j^2 \Bigr).
\]
Here, $\tilde{F}(r,t)$ {\rm(}and hence $\hat{g}$ too{\rm)}
is independent of $r$ for $r \ge 5$.
\item\label{it:g-hat-r} The metric $\hat{g}$ in {\rm(6)}
has non-positive curvature for $r \ge 5$.
\end{enumerate}
\end{prop}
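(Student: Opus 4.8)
The plan is to imitate the proof of Proposition \ref{estimate}: write down one explicit $\isom$-invariant metric $g$ on $\R\times\R\times S^1(c_1)\times S^1(c_2)\times\tilde N$, compute its curvature, and then verify the seven listed properties one by one. Here $\tilde N=\R^{n-2}$ carries the hyperbolic metric $dw^2+\sum_{j=1}^{n-3}e^{2w}dw_j^2$ (upper half-space model of $\Bbb H^{n-2}$), and $\pi_1(N)$ acts by hyperbolic isometries on that factor and trivially on the rest, so everything we do descends to the bundle over $N$. The guiding idea is that for $t$ large the block $dt^2+\sinh^2(t-5)\,d\tau^2+\cosh^2(t-5)\,g_{\Bbb H^{n-2}}$ is exactly the hyperbolic metric on $\Bbb H^n$ written in Fermi coordinates around a totally geodesic $\Bbb H^{n-2}$ (the geodesic being the $w$-axis direction), with the $\tau$-circle as the normal-sphere direction; this is the geometric content of ``$P_i=Y_i|V_i$ with the locally hyperbolic-times-$S^1$ metric'' near $\partial P_i$. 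So I would set
\[
\tilde F(r,t):=b\,e^{R(r)}\,\phi(t-R(r)),\qquad T(t):=\psi(t),
\]
where $\phi$ is a smoothing of the function that is $1$ for small argument and $\sinh(\cdot)$ for large argument, $\psi$ a smoothing of the function that is $1$ for small argument and $\cosh(\cdot)$ for large argument (so $\psi=\cosh$ and $\phi=\sinh$ for argument $\ge$ some fixed constant, arranging $a=R(r)+$const $\le 8$), and then put
\begin{align*}
g &= dr^2 + h(r)^2\Bigl( dt^2 + b^2e^{2R(r)}\,d\rho^2 + \tilde F(r,t)^2\,d\tau^2 \\
&\qquad\qquad\qquad + T(t)^2\bigl(dw^2+\textstyle\sum_{j=1}^{n-3}e^{2w}dw_j^2\bigr)\Bigr),
\end{align*}
with $h$, $R$, $b$ exactly as in Section~2. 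The point of the $h(r)^2$ out front and the $e^{2R(r)}$ on the $\rho$-factor is, just as before, to make $(-\infty,0]\times(\text{everything})$ have finite volume while keeping curvature pinched; the point of $\phi,\psi$ rather than raw $\sinh,\cosh$ is to keep the metric product-like (hence glueable, no singularity) for $t\le -1$, giving items (3),(4).

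Next I would carry out the curvature computation. This is a warped-product-of-warped-products situation: $g=dr^2+h(r)^2 g_r$ where $g_r$ itself is, for each fixed $r$, a multiply-warped metric over the $t$-line with fibers $S^1(c_1)$ (warped by the constant-in-$t$ function $be^{R(r)}$), $S^1(c_2)$ (warped by $\tilde F$), and $\Bbb H^{n-2}$ (warped by $T$). I would record the Christoffel symbols and the nonzero components $R_{ijkl}$ exactly as in Section~2; the new phenomena are the $\partial_w$- and $\partial_{w_j}$-directions, but because the $\Bbb H^{n-2}$-factor enters only through the single warping function $T(t)$ and $\Bbb H^{n-2}$ itself has constant curvature $-1$, the curvature in those directions is controlled by $T'/T$, $T''/T$ and the $-1$ of the fiber, in complete analogy with how the $\tau$-directions were handled via $F$. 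So the off-diagonal curvatures are again all of the form $R_{1jj2}$ (only $j$ in the $\tau$-block, since $\tilde F$ is the only warping function depending on both $r$ and $t$), and the sectional-curvature negativity reduces, exactly as in Lemma~\ref{curvature}, to the single inequality $(R_{1jj2})^2<R_{1jj1}R_{2jj2}$ for $j$ the $\tau$-index, which in turn reduces to $\phi$-inequalities of the shape $(\phi'-\phi'')^2\le(\phi-2\phi'+\phi'')\phi''$ analogous to Lemma~\ref{lem:f}, holding because $\sinh''=\sinh$, $\sinh'=\cosh>\sinh$, and these survive smoothing by convolution with $\lambda$. The diagonal curvatures $K_{ij}<0$ follow from $h,h'',H'',\phi,\phi'',\psi,\psi''>0$ and uniform boundedness of all the logarithmic-type ratios, just as in Lemmas~\ref{lem:key}--\ref{lem:K_ij}; uniform boundedness of $K_\sigma$ from above again follows because the only potentially unbounded term $R_{1jj2}$ has compact $(r,t)$-support (it is supported where $R'\ne0$ and where $\phi'-\phi''\ne0$). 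This gives (1). I would state and prove the auxiliary lemmas ($\phi'\le\phi$, $\phi'\le\phi''$, and the boundedness lemma) as verbatim analogues of Lemmas~\ref{lem:f}--\ref{lem:K_ij}, and I expect the main obstacle to be purely bookkeeping: making sure the interplay of the three warping functions in the $R_{\text{block},\text{block}'}$ terms (e.g.\ $\rho$ vs.\ $w$, $\tau$ vs.\ $w$) is genuinely handled by the one key inequality and does not produce a genuinely new sign condition — I would organize this by noting that $d\rho^2$ is warped by a function of $r$ only, $T$ by a function of $t$ only, and only $\tilde F$ by a function of both, so mixed second derivatives vanish except in the $\tau$-block, which is the Section~2 situation.

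Finally I would dispatch (2)–(7) by direct substitution, just as in the proof of Proposition~\ref{estimate}. For (2): on $r\le-1$ we have $R(r)=r$, $h(r)=1+e^r\le2$, the $\rho$- and $\tau$- and $w$-warpings are all $O(e^r)$ or bounded, and the $t$-range has length $O(|r|)$ with the fiber volumes decaying like $e^{(n-1)r}$ or so against a polynomial-in-$r$ factor, so $\int_{-\infty}^{-1}(\text{polynomial})\,e^{\text{(positive)}\,r}\,dr<\infty$; I would split into $t\in[r-1,r+1]$ and $t\in[r+1,2]$ as before and bound explicitly. For (3),(4): on $t\le R(r)-$const we have $\phi\equiv1$ and $\psi\equiv1$, giving exactly the stated product forms (with $R(r)=r$ when $r\le0$). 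For (5): for $t\ge a$ we have $\phi(t-R(r))=\sinh(t-5)$ and $\psi(t)=\cosh(t-5)$ after choosing the smoothing so that the crossover happens at argument making $t-R(r)$ and $t$ line up to $t-5$; substitute. For (6): for $r\ge5$, $R(r)=3$ and $h(r)=2e^r$, so $g=dr^2+4e^{2r}\hat g$ with $\hat g$ as displayed, and $\tilde F(r,t)=be^3\phi(t-3)$ is indeed $r$-independent there. For (7): $\hat g$ is itself a multiply-warped metric over the $t$-line with warping functions $be^3$ (constant), $\tilde F(t)$, and $T(t)$, all convex and nonnegative together with their relevant derivatives, so the same curvature formulas give $K_{\hat g}\le0$ — here one only needs $\le0$, not strict negativity, so the weaker inequalities $\phi''\ge0$, $\psi''\ge0$, $(\phi'-\phi'')^2\le(\phi-2\phi'+\phi'')\phi''$ suffice, exactly as Proposition~\ref{estimate}(7) followed from $f''\ge0$. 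Assembling these completes the proof of the proposition.
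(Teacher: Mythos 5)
Your overall architecture (the multiply warped ansatz $g=dr^2+h^2(dt^2+b^2e^{2R}d\rho^2+\tilde F^2 d\tau^2+T^2 g_{\Bbb H^{n-2}})$, the Fermi-coordinate interpretation, and the reduction of negativity to one cross-term inequality in the $\tau$-block) matches the paper. But there is a genuine gap in your construction of $\tilde F$. You set $\tilde F(r,t)=b\,e^{R(r)}\phi(t-R(r))$ with $\phi$ interpolating between $1$ and $\sinh$. This cannot satisfy item (5): for $t\ge a$ the coefficient of $d\tau^2$ must equal $\sinh^2(t-5)$ for \emph{every} $r$, but $b\,e^{R(r)}\sinh(t-R(r)-c_0)$ depends on $r$ (e.g.\ for $r\le 1$ it is $b\,e^{r}\sinh(t-r-c_0)$), and no choice of a single one-variable $\phi$ fixes this. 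The exponential case works only because of the identity $e^{R}\cdot e^{t-R}=e^{t}$, which has no $\sinh$ analogue. Losing (5) is fatal for the application: the region $t\ge a$ must carry the genuine ($r$-independent) hyperbolic Fermi metric in order to glue onto the bulk piece $J_i$.

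The same ansatz also breaks your curvature argument. The auxiliary inequalities you need are $\phi'\le\phi$ (for $\tilde F_r\ge 0$) and $(\phi'-\phi'')^2\le(\phi-2\phi'+\phi'')\phi''$; for $\phi=\sinh$ one has $\phi'=\cosh>\sinh=\phi''=\phi$, so $\phi-2\phi'+\phi''=-2e^{-s}<0$ and the right-hand side of your key inequality is negative while the left-hand side is positive. (Your parenthetical ``$\sinh'=\cosh>\sinh$'' is exactly the wrong direction, and $\phi'-\phi''=e^{-s}$ does not have compact support either.) The paper avoids all of this by defining $\tilde F$ piecewise rather than by a closed formula: $\tilde F=F=b\,e^{R}f(t-R)$ for $t\le 5$ (where the Section~2 lemmas for $f$ apply verbatim), and $\tilde F=\sinh(t-5)$ for $t\ge a$, with the interpolation confined to $t\in[4,a]$ where $F=bc\,e^{t}$ is \emph{already} independent of $r$; the constant $b=c^{-1}e^{-(a-\delta)}\sinh(a-\delta-5)$ is chosen so that $bc\,e^{t}$ and $\sinh(t-5)$ cross at $t=a-\delta$ in the right order, allowing a smoothing with $\tilde F_t,\tilde F_{tt}\ge0$ and with $\tilde F$ independent of $r$ for $t\ge4$. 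Then the cross term $R_{1442}=-h^2\tilde F\tilde F_{rt}$ is handled by a case split: for $t>5$ it vanishes identically ($\tilde F_r=\tilde F_{rt}=0$), and for $t\le5$ one is back in the Section~2 computation. You need to adopt this two-regime definition of $\tilde F$; with your formula the proposition is not proved.
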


We postpone proving this proposition and prove 
Theorem \ref{thm2} using it. 
\subsection{Proof of Theorem \ref{thm2}} 

\proof
First, $W$ carries a metric 
of non-positive curvature by Proposition \ref{estimate.2}\eqref{it:g-hat-r}.
This implies that $\Wh(W)$ vanishes, \cite{FJ}.

We now show the claim for $\R \times W$.
We closely follow each step of the argument for Theorem \ref{metric}.
But there is one additional issue and we make a remark
on that. For Theorem \ref{metric},
each piece $P$ is a trivial bundle $\Sigma \times S^1$ (for the non-general case).
We glue pieces along boundaries by isometries, and 
a boundary component of $P$ is $S^1 \times S^1$, where the first
$S^1$ is a boundary component of $\Sigma$.
On the other hand, for Theorem \ref{thm2}, 
a boundary component of a piece $P$ will be an $S^1$-bundle over
an $S^1$-bundle over a hyperbolic manifold $N$:
$S^1 \rtimes (S^1 \rtimes N)$.
But notice that any metric $g$ on $S^1 \times S^1 \times N$ that 
is invariant by rotations on both circles gives a metric 
to the boundary which is locally isometric to $g$.
In view of this, when we construct a metric (see Section \ref{section.metric}), we consider only  rotationally 
invariant ones on a product space then 
descend it to a space with circle bundle structures,
so that the bundle issue is not an extra problem for us.
In the following, we may write $S^1 \rtimes (S^1 \rtimes N)$
simply as $S^1 \rtimes S^1 \rtimes N$.

{\it Step 1}. 
Fix a small constant $\epsilon>0$.
Set $\Sigma_i= M_i - N_\epsilon(N_i)$.
The boundary of $\Sigma_i$ is a circle bundle over $N_i$.
Set $P_i=(M_i - N_\epsilon(N_i)) \ltimes S^1$.

{\it Step 2}. 
We will put a metric on $P_i$ and glue them
along the boundary.
Set $K_i=M_i - N_{2\epsilon}(N_i)$.
Then $\Sigma_i - K_i$ is isometric to
$[\epsilon,2\epsilon) \times (S^1(2\pi) \rtimes N)$
with the metric 
$$dt^2 + \sinh(t) d\tau^2 + \cosh(t) g_N$$

{\it Step 3}.
For each $r \in \Bbb R$, we consider an $S^1$-bundle which 
is locally a  Riemannian product:
$$(M_i-N_\epsilon(N_i)) \ltimes S^1(be^{R(r)-2}L),$$ then further take a 
``generalized'' warped 
product with $\Bbb R$ 
 as follows:
$$J_i = \Bbb R \times_{h(r)}  \{ (M_i-N_\epsilon(N_i))\ltimes S^1(be^{R(r)-2}L)\},$$
where at each $r$, the metric of the fiber $K_i \ltimes S^1(be^{R(r)-2}L)$
is rescaled by $h(r)$.
We say this is a generalized warped product since 
the metric on the fiber at $r$ depends on $r$.

Then we have the following lemma.
The argument is similar to Lemma \ref{core.estimate}
and we skip it.
\begin{lem}\label{core.estimate.2}
\begin{enumerate}
 \item The subset of $J_i$ for the part $r<0$
has finite volume, which is  bounded above
by
$2^{n+1} e^{n-1} b L \vol_{hyp}(M_i)$.
\item
For the part $r>5$,  $J_i$ is a warped product:
$$(\,5,+\infty\,) \times_{2e^{r}} (\R \times S^1 \rtimes S^1 \rtimes N)$$
\item
The sectional curvature of $J_i$ is bounded:
$$ C \le K <0, $$
where $C<0$ is the constant from Proposition \ref{estimate.2}.
\item
Each boundary component of $J_i$  is isometric
to 
$$\Bbb R \times_{h(r)} (S^1(bce^2L) \rtimes S^1(be^{R(r)-2}L) \rtimes N).$$
\end{enumerate}
\end{lem}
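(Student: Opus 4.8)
The plan is to mirror the proof of Lemma \ref{core.estimate} step by step, substituting everywhere the data of the higher-dimensional hyperbolic piece and invoking Proposition \ref{estimate.2} in place of Proposition \ref{estimate}. First I would recall that, by construction in Step 2 and Step 3, each end of $\Sigma_i = M_i - N_\epsilon(N_i)$ is isometric to $[\epsilon,2\epsilon)\times (S^1(2\pi)\rtimes N)$ with the collar metric $dt^2 + \sinh^2 t\, d\tau^2 + \cosh^2 t\, g_N$, and $J_i$ is the generalized warped product $\R\times_{h(r)}\bigl((M_i - N_\epsilon(N_i))\ltimes S^1(be^{R(r)-2}L)\bigr)$. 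For (1), fix $r<0$; then $R(r)=r$ and $h(r)\le h(1)\le 2e$, and the fiber $K_i \ltimes S^1(be^{r-2}L)$ has volume at most $be^{r-2}L\cdot\vol_{hyp}(M_i)$ (the hyperbolic volume of $M_i$ dominates that of the truncated $\Sigma_i$). Since $\dim J_i = n+2$, the warping by $h(r)^{n+1}$ and integration in $r$ give
\[
\vol(\{r<0\}) \le (2e)^{n+1} b L\,\vol_{hyp}(M_i)\int_{-\infty}^0 e^{r-2}\,dr = 2^{n+1} e^{n-1} b L\,\vol_{hyp}(M_i),
\]
which is the stated bound.

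For (2) I would simply observe that for $r>5$ one has $R(r)=3$ and $h(r)=2e^r$, so the generalized warped product degenerates into an honest warped product $(5,+\infty)\times_{2e^r}(\R\times S^1\rtimes S^1\rtimes N)$, the fiber metric being the one described in Proposition \ref{estimate.2}(6)--(7). For (3), the metric on $J_i$ is, away from the boundary deformation, of the form $g = dr^2 + h(r)^2\bigl(g_{hyp,N} + e^{2R(r)}d\rho^2\bigr)$ where $g_{hyp,N}$ is (locally) the hyperbolic metric on the complement of the neighborhood of $N_i$; this is exactly locally isometric to the metric appearing in Proposition \ref{estimate.2}(5) (after the canonical identification of the bundle $S^1\times S^1\rtimes N$ with the rotationally invariant product, as emphasized in the outline), and since sectional curvature is a local quantity, the bound $C\le K<0$ transfers verbatim from the proposition. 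For (4), each boundary component of $K_i$ is (by the collar computation in Step 2 and the choice of truncation) isometric to $S^1(bce^2L)\rtimes N$, so twisting by the fiber circle $S^1(be^{R(r)-2}L)$ and warping by $h(r)$ yields the asserted $\R\times_{h(r)}\bigl(S^1(bce^2L)\rtimes S^1(be^{R(r)-2}L)\rtimes N\bigr)$.

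The only genuinely new bookkeeping relative to Lemma \ref{core.estimate} is the appearance of the extra factor $T(t)^2\bigl(dw^2 + \sum e^{2w}dw_j^2\bigr)$ coming from the hyperbolic directions of $N$, and the $\sinh/\cosh$ form of the collar metric rather than the pure $e^t$ warping of a surface cusp; I expect matching these two normalizations — i.e. checking that the collar of $\Sigma_i$ in $M_i$ really does glue smoothly to the region $\{t\ge a\}$ described in Proposition \ref{estimate.2}(5) up to the scaling by $h(r)$ — to be the main (though still routine) point to verify, exactly as the analogous matching at $t=4$ was in the surface case. Everything else is a transcription of the earlier argument with $\dim = n+2$ in place of $\dim = 3$, so I would state that the proof "is similar to Lemma \ref{core.estimate}" and record only these four estimates, which is what the excerpt does.
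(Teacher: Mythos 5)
Your proposal is correct and follows exactly the route the paper intends: the paper itself omits this proof, saying only that the argument is similar to Lemma \ref{core.estimate}, and your transcription reproduces it faithfully, with the dimension count $h(r)^{n+1}$ for the $(n+1)$-dimensional fiber giving precisely $(2e)^{n+1}e^{-2} = 2^{n+1}e^{n-1}$ as stated. Your closing remark correctly identifies the one genuinely new point — matching the $\sinh/\cosh$ tube metric around $N_i$ to the region $t\ge a$ of Proposition \ref{estimate.2}(5), which is exactly why the paper chooses $b = c^{-1}e^{-(a-\delta)}\sinh(a-\delta-5)$.
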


{\it Step 4}.
Similar. We use Proposition \ref{estimate.2}.
We skip details. 

{\it Step 5}.
Similar. We use Proposition \ref{estimate.2}.
We skip details.

Theorem \ref{thm2} is proved. 
\qed

\subsection{Metric construction}\label{section.metric}
We are left with proving Proposition \ref{estimate.2}.
It is done by constructing $g$.
For any constant $a > 5$,
we put $\delta := \delta(a) := (a-5)/2$ and $b := b(a) := c^{-1} e^{-(a-\delta)} \sinh(a-\delta-5)$, where we recall $c = (\lambda * e^t)(0)$.
There is a $C^\infty$ function, $\tilde{F}$, on $\R^2$ such that
\begin{enumerate}
\item[(i)] for all $r$ and $t$,
\[
\tilde{F}(r,t) =
\begin{cases}
  F(r,t) = b \,e^{R(r)} f(t-R(r))  &\text{if $t \le 5$},\\
  \sinh (t-5) &\text{if $t \ge a$},
\end{cases}
\qquad 
\]
\item[(ii)] $\tilde{F}_t, \tilde{F}_{tt} \ge 0$ everywhere, 
\item[(iii)] $\tilde{F}(r,t)$ is independent of $r$ if $t \ge 4$ or $r \ge 5$.
\end{enumerate}
Let us explain why such a function $\tilde{F}$ exists.
Assume $t \ge 4$.
Since $R \le 3$, we have $t-R \ge 1$ and so
\[
b \, e^R f(t-R) = bc\,e^t
\begin{cases}
  > \sinh(t-5) &\text{if $t < a-\delta$,}\\
  = \sinh(t-5) &\text{if $t = a-\delta$,}\\
  < \sinh(t-5) &\text{if $t > a-\delta$.}
\end{cases}
\]
Therefore, there is a $C^\infty$ approximation, $\tilde{F}$,
of the continuous function
\[
\begin{cases}
  b \,e^{R(r)} f(t-R(r))  &\text{if $t \le a-\delta$},\\
  \sinh(t-5) &\text{if $t > a-\delta$},
\end{cases}
\]
satisfying the required conditions.

Take a $C^\infty$ function, $T$, on $\R$ such that
\[
T(t) = 
\begin{cases}
  1 &\text{if $t \le 4$},\\
  \cosh(t-5) &\text{if $t \ge a$},
\end{cases}
\qquad T \ge 1, \ T', T'' \ge 0.
\]

For $n \ge 2$, we consider the metric
\[
g = \sum_{i=1}^n g_i \, dx_i^2,
\]
where $x_1 := r$, $x_2 := t$, $x_3 := \rho$, and $x_4 := \tau$, $x_5 := w$,
$x_i := w_{i-5}$ for $i \ge 6$,
$g_1 := 1$, $g_2 := h(r)^2$, $g_3 := H(r)^2$, $H(r) := b\,e^{R(r)} h(r)$,
$g_4 := h(r)^2 \tilde{F}(r,t)^2$,
$g_5 := h(r)^2 T(t)^2$, $g_i := e^{2w} h(r)^2 T(t)^2$ for $i \ge 6$.
We see that
\begin{align*}
g &= dr^2 + h(r)^2 \biggl( dt^2 + b^2\,e^{2R(r)} \, d\rho^2 +  \tilde{F}(r,t)^2 \, d\tau^2 \\
&\qquad\qquad + T(t)^2 \Bigl( dw^2 + \sum_{j=1}^{n-3} e^{2w} dw_j^2 \Bigr)\biggr),
\end{align*}
where the term 
\[
T(t)^2 \Bigl( dw^2 + \sum_{j=1}^{n-3} e^{2w} dw_j^2 \Bigr)
\]
vanishes for $n = 2$.
Note that, for $t \ge a$, 
\begin{align*}
& dt^2 + \tilde{F}(r,t)^2 d\tau^2 + T(t)^2 \Bigl( dw^2 + \sum_{j=1}^{n-3} e^{2w} dw_j^2 \Bigr)\\
&= dt^2 +\sinh^2(t-5) d\tau^2 + \cosh^2(t-5) \Bigl( dw^2 + \sum_{j=1}^{n-3} e^{2w} dw_j^2 \Bigr).
\end{align*}
is a hyperbolic metric.

We calculate the Christoffel symbols:
\begin{align*}
\Gamma_{12}^2 &= \frac{h'}{h},
&\Gamma_{13}^3 &= \frac{H'}{H},
&\Gamma_{14}^4 &= \frac{\tilde{F}_r}{\tilde{F}} + \frac{h'}{h},\\
\Gamma_{1i}^i &= \frac{h'}{h} \ \text{for $i \ge 5$},\\
\Gamma_{22}^1 &= -hh',
&\Gamma_{24}^4 &= \frac{\tilde{F}_t}{\tilde{F}},
&\Gamma_{2i}^i &= \frac{T'}{T} \ \text{for $i \ge 5$},\\
\Gamma_{33}^1 &= -HH',
&\Gamma_{44}^1 &= -h^2 \tilde{F} \tilde{F}_r - h h' \tilde{F}^2,
&\Gamma_{44}^2 &= -\tilde{F} \tilde{F}_t,\\
\Gamma_{55}^1 &= -h h' T^2,
&\Gamma_{55}^2 &= -T T',
&\Gamma_{5i}^i &= 1 \ \text{for $i \ge 6$},\\
\Gamma_{ii}^1 &= - e^{2w} h h' T^2,
&\Gamma_{ii}^2 &= -e^{2w} T T',
&\Gamma_{ii}^5 &= -e^{2w} \ \text{for $i \ge 6$}.
\end{align*}
The curvature tensor is calculated as follows, for $j > i \ge 6$:
\begin{align*}
R_{1221} &= - h h'', \\
R_{1331} &= - H H'', \\
R_{1441} &= - h h'' \tF^2 - h^2 \tF \tF_{rr} - 2 h h' \tF \tF_r, \\
R_{1442} &= - h^2 \tF \tF_{rt}, \\
R_{1551} &= - h h'' T^2, \\
R_{1ii1} &= - e^{2w} h h'' T^2, \\
R_{2332} &= - h h' H H', \\
R_{2442} &= - h^3 h' \tF \tF_r - h^2 (h')^2 \tF^2 - h^2 \tF \tF_{tt}, \\
R_{2552} &= - h^2 (h')^2 T^2 - h^2 T T'', \\
R_{2ii2} &= - e^{2w} h^2 (h')^2 T^2 - e^{2w} h^2 T T'', \\
R_{3443} &= - h^2 \tF \tF_r H H' - h h' \tF^2 H H', \\
R_{3553} &= - h h' H H' T^2, \\
R_{3ii3} &= - e^{2w} h h' H H' T^2, \\
R_{4554} &= - h^3 h' \tF \tF_r T^2 - h^2 (h')^2 \tF^2 T^2 - h^2 \tF \tF_t T T', \\
R_{4ii4} &= - e^{2w} h^3 h' \tF \tF_r T^2 - e^{2w} h^2 (h')^2 \tF^2 T^2 - e^{2w} h^2 \tF \tF_t T T', \\
R_{5ii5} &= - e^{2w} h^2 (h')^2 T^4 - e^{2w} h^2 T^2 (1+(T')^2), \\
R_{ijji} &= - e^{4w} h^2 (h')^2 T^4 - e^{4w} h^2 T^2 (1+(T')^2).
\end{align*}
The sectional curvatures are:
\begin{align*}
K_{12} &= -\frac{h''}{h},
& K_{13} &= -\frac{H''}{H},\\
K_{14} &= -\frac{\tilde{F}_{rr}}{\tilde{F}} - \frac{2 h' \tilde{F}_r}{h\tilde{F}} - \frac{h''}{h},
& K_{1j} &= -\frac{h''}{h} \ \text{for $j \ge 5$}, \\
K_{23} &= -\frac{h' H'}{h H},
& K_{24} &= -\frac{\tilde{F}_{tt}}{h^2 \tilde{F}} - \frac{h' \tilde{F}_r}{h\tilde{F}} - \frac{(h')^2}{h^2},
\end{align*}
\begin{align*}
K_{2j} &= -\frac{T''}{h^2 T} - \frac{(h')^2}{h^2} \ \text{for $j \ge 5$},\\
K_{34} &= -\frac{\tilde{F}_r H'}{\tilde{F} H} - \frac{h' H'}{h H},\\
K_{3j} &= -\frac{h' H'}{h H} \ \text{for $j \ge 5$},\\
K_{4j} &= -\frac{\tilde{F}_t T'}{h^2 \tilde{F} T} - \frac{h' \tilde{F}_r}{h \tilde{F}} - \frac{(h')^2}{h^2}
 \ \text{for $j \ge 5$},\\
K_{ij} &= -\frac{(T')^2}{h^2 T^2} - \frac{1}{h^2 T^2} - \frac{(h')^2}{h^2}
\ \text{for $j > i \ge 5$}.
\end{align*}

\begin{lem} \label{lem:key2}
  \begin{enumerate}
  \item $\tilde{F} > 0$, $\tilde{F}_t, \tilde{F}_{tt}, \tilde{F}_r \ge 0$.
  \item $T \ge 1$, $T' \ge 0$, $T'' \ge 0$.
  \item The following functions are all uniformly bounded:
  \[
    \ \frac{\tilde{F}_r}{\tilde{F}}, \ \frac{\tilde{F}_{rr}}{\tilde{F}}, \ \frac{\tilde{F}_t}{\tilde{F}},
    \ \frac{\tilde{F}_{tt}}{\tilde{F}},
    \ \frac{T'}{T}, \ \frac{T''}{T}.
  \]
  \end{enumerate}
\end{lem}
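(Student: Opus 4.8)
The plan is to read off all three items directly from the construction of $\tF$ and $T$, reducing whatever can be reduced to Lemma \ref{lem:key}; the statement is, for $\tF$ and $T$, the exact analogue of what Lemma \ref{lem:key} was for $F,h,H$, so no new idea is needed.

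First I would dispose of items (1) and (2). Positivity of $\tF$ I would check region by region: for $t\le 5$ we have $\tF=F=be^{R(r)}f(t-R(r))>0$ since $b>0$ and $f\ge 1$; for $t\ge a$ we have $\tF=\sinh(t-5)>0$ because $a>5$; and on the remaining band $5\le t\le a$, $\tF$ is a $C^\infty$ approximation of a continuous, strictly positive function and may be taken positive, which is part of the conditions imposed on $\tF$ in the construction. The inequalities $\tF_t,\tF_{tt}\ge 0$ are exactly condition (ii). For $\tF_r$: on $\{t\le 5\}$ one has $\tF_r=F_r\ge 0$ by Lemma \ref{lem:key}(3), while on $\{t\ge 4\}$ condition (iii) says $\tF$ is independent of $r$, so $\tF_r=0$; together these give $\tF_r\ge 0$ on all of $\R^2$. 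Item (2) is immediate, since $T\ge 1$ and $T',T''\ge 0$ are part of the definition of $T$.

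For item (3) I would split the $t$-line into $t\le 5$, $5\le t\le a$, and $t\ge a$ and bound the six ratios on each piece. On $\{t\le 5\}$, $\tF=F$, so $\tF_r/\tF,\ \tF_{rr}/\tF,\ \tF_t/\tF,\ \tF_{tt}/\tF$ equal the corresponding ratios for $F$, which are bounded by Lemma \ref{lem:key}(5). On $\{t\ge a\}$, $\tF=\sinh(t-5)$, so $\tF_r=\tF_{rr}=0$, $\tF_t/\tF=\coth(t-5)$, and $\tF_{tt}/\tF\equiv 1$; since $t-5\ge a-5>0$ there, $\coth(t-5)$ is bounded. On the compact band $5\le t\le a$ condition (iii) makes $\tF$ a function of $t$ alone (as $t\ge 4$), so $\tF_r=\tF_{rr}=0$, and $\tF,\tF_t,\tF_{tt}$ are continuous with $\tF$ bounded below by a positive constant on this interval, hence the two remaining ratios are bounded. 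The same three-region argument applies to $T'/T$ and $T''/T$: these vanish for $t\le 4$, equal $\tanh(t-5)$ and $1$ for $t\ge a$, and are bounded on the compact interval $4\le t\le a$ because $T$ is smooth and $\ge 1$ there. Taking the largest of these finitely many bounds proves (3).

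There is no genuine obstacle here — the lemma is bookkeeping, mirroring Lemma \ref{lem:key}. The one spot that needs a little care is the transition band $5\le t\le a$ (and $4\le t\le a$ for $T$), where $\tF$ and $T$ are given only as $C^\infty$ approximations rather than by explicit formulas; there the argument must rely on the properties that were built into the construction (positivity, $\tF_t,\tF_{tt}\ge 0$, $r$-independence for $t\ge 4$, and $T\ge 1$) together with compactness of the interval, instead of a direct computation.
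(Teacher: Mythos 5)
Your proposal is correct and follows essentially the same route as the paper: items (1) and (2) are read off from the construction of $\tilde F$ and $T$ together with Lemma \ref{lem:key}, and item (3) is handled by splitting into the regions $t\le 5$, $5\le t\le a$, $t\ge a$ exactly as the paper does. The only difference is that you spell out the bookkeeping (e.g.\ $\tilde F_t/\tilde F=\coth(t-5)$ for $t\ge a$ and the compactness argument on the transition band) in more detail than the paper, which is fine.
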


\begin{proof}
(1) follows from the definition of $\tilde{F}$ and Lemma \ref{lem:key}.

(2) is clear.

We prove (3).
The boundedness of $T'/T$ and $T''/T$ are derived from the definition of $T$.
For $t \le 5$, we see that $\tilde{F} = F$ and
the boundedness of $\tilde{F}_r / \tilde{F}$, $\tilde{F}_{rr} / \tilde{F}$,
$\tilde{F}_t / \tilde{F}$, $\tilde{F}_{tt} / \tilde{F}$ follow from Lemma \ref{lem:key}.
For $t \ge 5$, we see that $\tilde{F}$ is independent of $r$, so that
$\tilde{F}_r = \tilde{F}_{rr} = 0$
and that $\tilde{F}_t / \tilde{F}$, $\tilde{F}_{tt} / \tilde{F}$ are bounded for
$t \in [\,5,a\,]$.
For $t \ge a$, we have $\tilde{F} = \sinh(t-5)$, for which
$\tilde{F}_t / \tilde{F}$, $\tilde{F}_{tt} / \tilde{F}$ are bounded
because of $a > 5$.
We thus obtain (3).
This completes the proof of the lemma.
\end{proof}

\begin{lem}\label{lem:K_ij-2}
There is a constant $C<0$ such that 
$C \le K_{ij} <0$ for all $i \neq j$.
\end{lem}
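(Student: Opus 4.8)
The plan is to mimic the proof of Lemma~\ref{lem:K_ij}: the list of sectional curvatures $K_{ij}$ computed above is the exact analogue of the one in the first construction, with $F$ replaced by $\tilde F$ and with additional coordinate directions coming from $T(t)$ and $w,w_1,\dots,w_{n-3}$, so essentially the same case analysis applies. First I would establish the lower bound. Every $K_{ij}$ in the list is a finite sum of terms, each of which is $\pm$ a product of ratios among $h'/h$, $h''/h$, $H'/H$, $H''/H$, $\tilde F_r/\tilde F$, $\tilde F_{rr}/\tilde F$, $\tilde F_t/\tilde F$, $\tilde F_{tt}/\tilde F$, $T'/T$, $T''/T$, possibly multiplied by $1/h^2$ or $1/(h^2T^2)$, together with the single extra term $1/(h^2T^2)$ occurring in $K_{ij}$ for $j>i\ge 5$. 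All these ratios are uniformly bounded by Lemma~\ref{lem:key} and Lemma~\ref{lem:key2}, and $h\ge 1$, $T\ge 1$ make the remaining factors $\le 1$; hence there is a constant $C<0$ with $K_{ij}\ge C$ for all $i\ne j$.

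Next, negativity. For the curvatures not involving the $\tau$-direction this is immediate from $h',h'',H',H''>0$ (Lemma~\ref{lem:key}) and $T\ge 1$ (Lemma~\ref{lem:key2}): namely $K_{12}=K_{1j}=-h''/h<0$ and $K_{23}=K_{3j}=-h'H'/(hH)<0$ for $j\ge 5$, $K_{13}=-H''/H<0$, and $K_{ij}\le -1/(h^2T^2)<0$ for $j>i\ge 5$. For $K_{2j}$ ($j\ge 5$), $K_{24}$, $K_{34}$ and $K_{4j}$ ($j\ge 5$), the sign statements $\tilde F_r,\tilde F_t,\tilde F_{tt}\ge 0$ (Lemma~\ref{lem:key2}) and $T'\ge 0$ make every summand $\le 0$ while one summand ($-(h')^2/h^2$, respectively $-h'H'/(hH)$) is strictly negative, so each of these is $<0$.

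The one case that needs care is $K_{14}=-\tilde F_{rr}/\tilde F-2h'\tilde F_r/(h\tilde F)-h''/h$, since $\tilde F_{rr}\ge 0$ can fail; this is exactly the obstacle met in Lemma~\ref{lem:K_ij}. I would split on $t$. For $t\ge 4$ the function $\tilde F(r,t)$ is independent of $r$ by property~(iii) in its construction, so $\tilde F_r=\tilde F_{rr}=0$ and $K_{14}=-h''/h<0$. For $t\le 5$ we have $\tilde F=F$, so $K_{14}$ coincides with the curvature $K_{1,2+l+\beta}$ of the first metric, whose negativity is proved in Lemma~\ref{lem:K_ij} by bounding $\varphi:=h(R')^2(f-2f'+f'')+hR''(f-f')+2R'h'(f-f')+h''f$ below by a positive quantity, an argument that uses only the pointwise inequalities of Lemma~\ref{lem:f} and no restriction on the argument $t-R(r)$ of $f$. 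Since $(-\infty,5]\cup[4,\infty)=\R$, this gives $K_{14}<0$ everywhere.

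I expect the reduction of $K_{14}$ to Lemma~\ref{lem:K_ij} to be the only nonroutine point; all the other curvatures are handled by a short sign bookkeeping using Lemmas~\ref{lem:key} and \ref{lem:key2}, exactly as in the first construction.
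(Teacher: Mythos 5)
Your proof is correct and follows essentially the same route as the paper's: routine sign and boundedness checks via Lemmas \ref{lem:key} and \ref{lem:key2} for every $K_{ij}$ except $K_{14}$, and for $K_{14}$ a case split into the region where $\tilde F$ is independent of $r$ (so $K_{14}=-h''/h$) and the region where $\tilde F=F$, which reduces to the $\varphi$-estimate of Lemma \ref{lem:K_ij}. Your split on $t$ (using $(-\infty,5]\cup[4,\infty)=\R$ and property (iii)) is a clean, equivalent version of the paper's split on $r$.
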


\begin{proof}
The negativity and boundedness of $K_{ij}$
is readily seen from Lemmas \ref{lem:key} and \ref{lem:key2}
except the negativity of $K_{14}$.
We remark that $\tilde{F}_{rr} \ge 0$ does not hold.

In the case where $r \ge 5$, we see that $\tilde{F}$ is independent of $r$
and then
\[
K_{14} = -\frac{h''}{h},
\]
which is negative and bounded by Lemma \ref{lem:key}.

In the case where $r \le 5$, we see $\tilde{F} = F$,
in which case the negativity and the boundedness of $K_{14}$
are proved in the same way as in Lemma \ref{curvature}.
This completes the proof.
\end{proof}

\begin{lem} \label{curvature2}
There is a constant $C<0$ such that 
$C \le K_\sigma <0$ for all $2$-planes $\sigma$
of the tangent spaces at all points.
\end{lem}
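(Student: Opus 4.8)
The plan is to follow, essentially verbatim, the proof of Lemma~\ref{curvature}; the only new feature here is the presence of the extra coordinate directions $x_5=w,x_6=w_1,\dots$ carrying the warped hyperbolic fibre metric $T(t)^2\bigl(dw^2+\sum_j e^{2w}\,dw_j^2\bigr)$. First I would read off from the explicit list of $R_{ijkl}$ for the metric $g$ of Proposition~\ref{estimate.2} that the only nonzero components, up to the usual symmetries, are of the form $R_{ijji}$ together with the single mixed component $R_{1442}=-h^2\tilde F\tilde F_{rt}$. In particular, none of the new directions $x_i$, $i\ge5$, produces a mixed component, because the $r$-dependence of $g_i$ for $i\ge5$ lies entirely in the factor $h(r)^2$, exactly as for $g_2$. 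By Lemma~\ref{lem:K_ij-2} each $R_{ijji}$ is strictly negative, so the situation is formally identical to that in the proof of Lemma~\ref{curvature} with $j=2+l+\beta$ replaced by $j=4$: for an orthogonal basis $\{u,v\}$ of an arbitrary $2$-plane $\sigma$, the contribution of every pair of indices other than $\{1,2,4\}$ is nonpositive, and the negativity of $K_\sigma$ reduces, by the same quadratic-form argument, to the pointwise inequality
\[
(R_{1442})^2<R_{1441}\,R_{2442},
\]
which is the exact analogue of \eqref{eq:R}.

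Next I would verify this inequality by cases. If $t\ge4$ or $r\ge5$, then by condition~(iii) of the construction of $\tilde F$ the function $\tilde F$ is independent of $r$ near the point, so $\tilde F_r=\tilde F_{rr}=\tilde F_{rt}=0$; hence the left-hand side vanishes while $R_{1441}R_{2442}=hh''\tilde F^2\cdot\bigl(h^2(h')^2\tilde F^2+h^2\tilde F\tilde F_{tt}\bigr)>0$ by Lemma~\ref{lem:key}. In the remaining region $t<4$, $r<5$ one has $\tilde F=F$ near the point, and since $h(r)=2e^r$ for $r\ge1$ and $R(r)=r$ for $r\le1$, the inequality is word for word the one established in the proof of Lemma~\ref{curvature}: for $r\le1$ it follows from $F_{rt}^2\le F_{rr}F_{tt}$, i.e.\ from Lemma~\ref{lem:f} and $f\ge1$; for $1<r<5$ it is the inequality~\eqref{eq:RR}, handled using $-\tfrac12\le R''\le0$, $R'\ge0$, and Lemma~\ref{lem:f}. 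This yields $K_\sigma<0$ at every point.

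Finally, for the uniform lower bound I would bound, exactly as in the proof of Lemma~\ref{curvature}, each quotient
\[
A_{ijkl}:=\frac{|u^iv^jv^ku^l R_{ijkl}|}{\sum_{a,b}(u^a)^2(v^b)^2 g_ag_b}.
\]
For the diagonal components $A_{ijji}\le|K_{ij}|$, which is bounded by Lemma~\ref{lem:K_ij-2}. For the single mixed component, $A_{1442}=0$ unless $u^1u^2\ne0$, and otherwise, writing $s:=|u^1/u^2|$ and using $sg_1+g_2/s\ge2h$, one bounds $A_{1442}$ by $\tfrac12\,|\tilde F_{rt}|/\tilde F$; since $\tilde F_{rt}$ vanishes unless $t\le4$ and $r\le5$, where $\tilde F=F$ and $|\tilde F_{rt}|/\tilde F=R'|f'-f''|/f$ (evaluated at $t-R$), this is bounded by Lemma~\ref{lem:f} together with $R'$ bounded and $f\ge1$. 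I do not expect any genuine obstacle; the only mildly delicate point is the reduction to a single scalar inequality for a $2$-plane, but this is forced by the sparseness of $R_{ijkl}$ exactly as in Lemma~\ref{curvature}, and the interpolating function $\tilde F$ was built (condition~(iii)) precisely so that $\tilde F_{rt}$ is supported in the region where $\tilde F=F$, reducing the whole matter to the already-proved Lemma~\ref{curvature}.
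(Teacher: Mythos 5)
Your proposal is correct and follows essentially the same route as the paper's proof: reduce strict negativity to the single inequality $(R_{1442})^2 \le R_{1441}R_{2442}$, dispose of it by the two cases "$\tilde F$ independent of $r$" versus "$\tilde F = F$" (where Lemma \ref{curvature} applies verbatim), and obtain the uniform lower bound from $A_{ijji}\le|K_{ij}|$ together with the bound on $A_{1442}$ via the compact support of $f'-f''$. The only (immaterial) difference is your case split by "$t\ge4$ or $r\ge5$" versus the paper's "$t>5$ / $t\le5$".
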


\begin{proof}
We prove the lemma in a similar way to that of Lemma \ref{curvature}.

As is already seen in \eqref{eq:R},
for the negativity of $K_\sigma$, it suffices to prove
\begin{equation} \label{eq:R2}
(R_{1442})^2 \le R_{1441} R_{2442}.
\end{equation}
If $t > 5$, then $\tilde{F}$ is independent of $r$ and so
$\tilde{F}_{rt}^2 = \tilde{F}_{rr} \tilde{F}_{tt} = 0$,
which implies \eqref{eq:R2}.
If $t \le 5$, then $\tilde{F} = F$ and the calculation in the proof of
Lemma \ref{curvature} yields \eqref{eq:R2}.
The negativity of $K_\sigma$ follows.

We prove the boundedness of $K_\sigma$ for all $\sigma$.
It suffice to estimate $A_{ijji}$ for $i < j$ and $A_{1442}$,
where $A_{ijkl}$ is defined in the proof of Lemma \ref{curvature}.
By $A_{ijji} \le |K_{ij}|$ and by Lemma \ref{lem:K_ij-2},
we have the boundedness of $A_{ijji}$.
The same calculation as in the proof of Lemma \ref{curvature}
leads us to
\[
A_{1442} \le \frac{\tF_{rt}}{2h\tF} \le \frac{\tF_{rt}}{2\tF}. 
\]
If $t > 5$, then $\tF$ is independent of $r$ and so $\tF_{rt} = 0$.
If $t \le 5$, then $\tF = F = b e^R f(t-R)$ and so
\[
\frac{\tF_{rt}}{2\tF} = \frac{R'(f'-f'')}{f},
\]
which is bounded since $f'-f''$ has compact support.
This completes the proof.
\end{proof}

We are ready to prove Proposition \ref{estimate.2}.
\begin{proof}[Proof of Proposition {\rm\ref{estimate.2}}]
First of all, the rotational invariance of $g$ is clear
by the form of $g$.

(1) By Lemma \ref{curvature2}.

Checking (2) - (6) is similar to (2) - (6) of Proposition \ref{estimate}. We omit it. 

We prove \eqref{it:g-hat-r}.
Assume $r \ge 5$.  The curvature tensor for the metric $\hat{g}$
is obtained as, for $j > i \ge 5$,
\begin{align*}
R_{1331} &= - \tF \tF_{tt}, 
& R_{1441} &= - T T'', \\
R_{1ii1} &= - e^{2w} T T'', 
& R_{3443} &= - \tF \tF_t T T', \\
R_{3ii3} &= - e^{2w} \tF \tF_t T T', 
& R_{4ii4} &= - e^{2w} T^2 (1+(T')^2), \\
R_{ijji} &= - e^{4w} T^2 (1+(T')^2), &
\end{align*}
which are the unique nonzero values of $R_{ijkl}$ under the (skew-)symmetry.
This together with Lemma \ref{lem:key2}(1)(2) implies
the non-positivity of all the sectional curvatures.
We have proved Proposition \ref{estimate.2}.
\end{proof}

\section{Questions}
\subsection{More complicated examples}
As we explained in section \ref{section.construction},
a flip manifold can be obtained as follows: take two surfaces $V_1,V_2$, remove
a small neighborhood of a point $p_i$ from each of them, then consider an $S^1$-bundle
over each. The boundary of each manifold is an $(S^1 \times S^1)$-bundle
over a point ($p_1$ and $p_2$),  and now we glum them by a flip map. 

Regarding the above example, one can view
$V_1$ and $V_2$ are intersecting in one point.
In view of this, a similar construction can be done in dimension $2n, n \ge1$,
with a more complicated intersection pattern. 
The above case
is for $n=1$, and we describe the case for $n=2$.
 Let $V_1$ be a closed hyperbolic $4$-manifold
with two, isometric, totally geodesic embedded closed $2$-submanifolds
$V_{12},V_{13}$ intersecting at one point $V_{123}$ transversally.
Prepare two other copies:
$V_2$ with submanifolds $V_{23},V_{21}$; and $V_3$ with submanifolds $V_{31}, V_{32}$.

Fix a small $\epsilon>0$, and consider an $S^1$-bundle: 
$$X_1 =(V_1 \backslash N_\epsilon(V_{12}\cup V_{13})) \ltimes S^1,$$ whose
boundary is 
$\d N_\epsilon(V_{12}\cup V_{13}) \ltimes S^1$.
Note that  $\d N_\epsilon(V_{12}\cup V_{13})$ is a flip manifold embedded in $V_1$:
$$(V_{12}\backslash N_\epsilon(V_{123}) \ltimes S^1)\cup_{V_{123} \ltimes S^1 \ltimes S^1} (V_{13} \backslash N_\epsilon(V_{123}) \ltimes S^1),$$
where we flip the two $S^1$-fibres in 
$V_{123} \ltimes S^1 \ltimes S^1$ when we glue the left piece to the right one. 
Similarly, consider $S^1$-bundles $X_2, X_3$ for  $V_2, V_3$, respectively. 
We put a locally product metric on each $X_i$.

Now from $X_1, X_2, X_3$, 
we define a $5$-manifold 
$$M^5=(X_1 \cup X_2 \cup X_3)/\sim,$$
where $\sim$ means  gluing among the boundaries of $X_1,X_2,X_3$:
\begin{align*}
  \partial X_1 &= 
  (V_{12} \backslash N_{\epsilon}(V_{123})) \ltimes S^1 \ltimes S^1
\cup_{V_{123}\ltimes S^1 \ltimes S^1
\ltimes S^1}
(V_{13} \backslash N_{\epsilon}(V_{123})) \ltimes S^1 \ltimes S^1, \\
 \partial X_2 &=
 (V_{21} \backslash N_{\epsilon}(V_{123})) \ltimes S^1 \ltimes S^1
\cup_{V_{123}\ltimes S^1 \ltimes S^1
\ltimes S^1}
(V_{23} \backslash N_{\epsilon}(V_{123})) \ltimes S^1 \ltimes S^1, \\
\partial X_3&= 
(V_{32} \backslash N_{\epsilon}(V_{123})) \ltimes S^1 \ltimes S^1
\cup_{V_{123}\ltimes S^1 \ltimes S^1
\ltimes S^1}
(V_{31} \backslash N_{\epsilon}(V_{123})) \ltimes S^1 \ltimes S^1.
 \end{align*}
 
 A gluing map is described as follows for each pair $(i,j)$: use the obvious 
 identification 
 $V_{ij}\backslash N_{\epsilon}(V_{123})=V_{ji}\backslash N_{\epsilon}(V_{123})$ and flip
  the two $S^1$-fibers. 
The common manifold $V_{123}\ltimes S^1 \ltimes S^1
\ltimes S^1$ is shared by all of them in $M$.
We assume that the identification are done by isometries.

It would be interesting to know if $M$
appears as an end (cf. \cite{AS}, see also \cite{B} on the topology
of thsoe ends).
In view of our strategy, 
as the first step we want to know if $M$ has a metric 
of non-positive curvature, but the curvature estimate becomes more subtle
when we look for an eventually warped cusp metric for $\R \times M$.

\subsection{Graph manifolds}
Among graph manifolds $W$, 
we proved that $W$ appears as an end
if it has a Riemannian metric of non-positive curvature
(Corollary \ref{cor.nonpositive}).
See  \cite{BS} on the question to decide which graph manifolds carry Riemannian 
metric of non-positive curvature.
Leeb \cite{L} gave an example of graph manifold that does not have  a metric of 
non-positive curvature.
It would be interesting to know if his examples will/will not appear as an end.

\end{document}